\renewcommand{\BibLabel}{%
%    \hfill
    \Hy@raisedlink{\hyper@anchorstart{cite.\CurrentBib}\hyper@anchorend}%
    [\thebib]%
}
\renewcommand{\PrintDOI}[1]{%
  \href{http://dx.doi.org/#1}{\texttt{DOI:#1}}%
%  \IfEmptyBibField{volume}{, (to appear in print)}{}%
}
\renewcommand{\eprint}[1]{#1}
\numberwithin{equation}{section}
\newtheorem{theorem}{Theorem}[section]
\newtheorem{lemma}[theorem]{Lemma}
\newtheorem{proposition}[theorem]{Proposition}
\newtheorem{corollary}[theorem]{Corollary}
\theoremstyle{definition}
\theoremstyle{remark}
\newtheorem{remark}[theorem]{Remark}
\newtheorem{example}[theorem]{Example}
\newcommand{\Z}{{\mathbb{Z}}}
\newcommand{\R}{{\mathbb{R}}}
\newcommand{\C}{{\mathbb{C}}}
\newcommand{\T}{{\mathbb{T}}}
\newcommand{\cC}{\mathcal{C}}
\newcommand{\cA}{\mathcal{A}}
\newcommand{\cU}{\mathcal{U}}
\newcommand{\cM}{\mathcal{M}}
\newcommand{\GL}{\mathrm{GL}}
\newcommand{\ptimes}{\mathbin{\hat{\otimes}}}
\newcommand{\ubE}{\underline{E}}
\newcommand{\Ban}{\mathrm{Ban}}
\newcommand{\shrpbin}{\mathbin{\#}}
\newcommand{\norm}[1]{\left\| #1 \right\|}
\newcommand{\absv}[1]{\left| #1 \right|}
\newcommand{\gen}[1]{\left\langle #1 \right\rangle}
\newcommand{\CLF}{\mathrm{CLF}}
\newcommand{\SL}{\mathrm{SL}}
\DeclareMathOperator{\HP}{HP}
\DeclareMathOperator{\HC}{HC}
\DeclareMathOperator{\CC}{CC}
\DeclareMathOperator{\HH}{HH}
\DeclareMathOperator{\KK}{\mathit{KK}}
\DeclareMathOperator{\bop}{\mathbf{b}}
\DeclareMathOperator{\Bop}{\mathbf{B}}
\DeclareMathOperator{\Ad}{Ad}
\DeclareMathOperator{\End}{End}
\DeclareMathOperator{\Hom}{Hom}
\DeclareMathOperator{\Ext}{Ext}
\DeclareMathOperator{\Map}{Map}
\DeclareMathOperator{\rk}{rk}
\DeclareMathOperator{\ev}{ev}
\DeclareMathOperator{\Id}{Id}
\DeclareMathOperator{\ord}{ord}
\title{\normalfont\spacedallcaps{Tracing cyclic homology pairings under twisting of graded algebras}} % The article title
\author{Sayan Chakraborty$^*$ \and Makoto Yamashita$^\dagger$} % The article author(s) - author affiliations need to be specified in the AUTHOR AFFILIATIONS block
\date{September 11, 2017}
\begin{document}

%----------------------------------------------------------------------------------------
%	HEADERS
%----------------------------------------------------------------------------------------

\renewcommand{\sectionmark}[1]{\markright{\spacedlowsmallcaps{#1}}} % The header for all pages (oneside) or for even pages (twoside)
\renewcommand{\subsectionmark}[1]{\markright{\thesubsection~#1}} % Uncomment when using the twoside option - this modifies the header on odd pages
\lehead{\mbox{\llap{\small\thepage\kern10em\color{halfgray} \vline}\color{halfgray}\hspace{0.5em}\rightmark\hfil}} % The header style

\pagestyle{scrheadings} % Enable the headers specified in this block

%----------------------------------------------------------------------------------------
%	TABLE OF CONTENTS & LISTS OF FIGURES AND TABLES
%----------------------------------------------------------------------------------------

\maketitle % Print the title/author/date block

%----------------------------------------------------------------------------------------
%	ABSTRACT
%----------------------------------------------------------------------------------------

\begin{abstract}
We give a description of cyclic cohomology and its pairing with $K$-groups for $2$-cocycle deformation of algebras graded over discrete groups. The proof relies on a realization of monodromy for the Gauss--Manin connection on periodic cyclic cohomology in terms of the cup product action of group cohomology.
\end{abstract}

%----------------------------------------------------------------------------------------
%	AUTHOR AFFILIATIONS
%----------------------------------------------------------------------------------------

{\let\thefootnote\relax\footnotetext{*: \texttt{sayan.c@uni-muenster.de}, Westf\"{a}lische Wilhelms-Universit\"{a}t M\"{u}nster\\
\textdagger: \texttt{yamashita.makoto@ocha.ac.jp}, Ochanomizu University}}

%----------------------------------------------------------------------------------------

\section*{Introduction}

Cyclic homology theory as initiated by Tsygan~\cite{MR695483} and Connes~\cite{MR823176} provides a natural extension of de Rham theory for manifolds to objects of noncommutative geometry, which are represented by noncommutative algebras. The main subject of this paper is the cyclic cohomology for deformation of algebras graded by some discrete group $G$, with deformation parameter given by exponential of a $2$-cocycle on $G$. This deformation scheme has many good analogies with the theory of (formal) deformation quantization of Poisson manifolds, but at the same time gives a concrete example of strict deformation in the sense of Rieffel, meaning that we can specialize the deformation parameter to concrete numbers and represent the deformed algebra on Hilbert spaces. The most fundamental example in this setting is the celebrated noncommutative torus $\T^2_\theta$, which is represented by a twisted group algebra of $\Z^2$, and is also a deformation quantization of $\T^2$ with respect to the translation invariant Poisson structure.

Our goal is to understand: (1) the cyclic cohomology of such algebras as abstract linear spaces, and (2) the pairing with $K$-groups / cyclic homology groups for deformed algebras, in terms of those for the original algebra and the cohomology of $G$. For the case of the group algebra of $G = \Z^2$ (which leads to the noncommutative torus), these were carried out in~\citelist{\cite{rieffel-irr-rot-pres}\cite{MR595412}\cite{MR731772}\cite{MR823176}}, but beyond the commutative (or finite, in which case this deformation is trivial) case, the full understanding is limited to only a handful of cases. A notable case is the crossed product of $\Z^2$ by a finite subgroup of $\SL_2(\Z)$, where relatively comprehensive description of $K$-groups, cyclic cohomology groups, and their pairings for twisted group algebras are known~\citelist{\cite{MR1332894}\cite{MR1758235}\cite{MR2301936}\cite{arXiv:1403.5848}\cite{MR3427637}}. Let us also mention that another interesting work from a geometric viewpoint is carried out by Mathai~\cite{MR2218025} for the standard traces of twisted group algebras for many interesting classes of groups.

Regarding the point (1) above, usual computation of cyclic cohomology starts with the computation of Hochschild cohomology groups, which serve as analogue of the spaces of differential forms. Once this is done, the cyclic cohomology groups can be obtained using the map induced by Rinehart--Connes operator, playing the role of the exterior derivative. However, while the end result of computation, particularly the periodic cyclic cohomology groups tend to be invariant under deformation of algebra structures, the intermediate Hochschild cohomology is very sensitive to deformation. This is already apparent in the case of noncommutative torus, and there is little hint for the more general case of deformation of graded algebras by $2$-cocycles, even just for the twisted group algebras.

Our approach, instead, is based on the Gauss--Manin connection for periodic cyclic (co)ho\-mo\-lo\-gy introduced by Getzler~\cite{MR1261901}. The monodromy for this connection, if it makes sense, will give a natural isomorphism between the periodic cyclic cohomology of the deformed algebra and that of the original one. This plays a particularly powerful role in the setting of Poisson deformation quantization, and nicely explains the relation between the Poisson homology of a Poisson manifold and the cyclic homology of the deformed algebra. Motivated by this, our viewpoint could be phrased as follows: the group algebra of $G$ represents a quantum group, which is (up to ``Poincar\'{e} dual'') homotopically modeled on the classifying spaces for centralizers of torsion elements of $G$. A $2$-cocycle $\omega_0$ on $G$ represents an (invariant) ``Poisson bivector'' on this quantum group. A G-grading on an algebra corresponds to an action of this quantum group, by which we can transport the ``Poisson bivector'' and related structures.

A new challenge in the setting of strict deformation is that the Gauss--Manin connection is intrinsically defined on the infinite dimensional space of periodic cyclic (co)chains, and there is no general theorem to integrate it. Developing the second named author's previous work~\cite{arXiv:1207.6687} on ``invariant'' cyclic cocycles, we solve this issue by identifying the action of connection operator up to chain homotopy by the cup product action of group cohomology.

As it turns out, this formalism also fits well with recent developments in the operator algebraic $K$-theory of such algebras~\citelist{\cite{MR2608195}\cite{arXiv:1107.2512}}, where one considers the $C([0, 1])$-algebra whose fiber at $t \in [0, 1]$ is the deformation with respect to the rescaled $2$-cocycle $e^{t \sqrt{-1} \omega_0}$. If $G$ satisfies the Baum--Connes conjecture with coefficients, the evaluation at each $t$ induces isomorphism of $K$-groups, which basically solves the problem (2). On the purely algebraic side, the image of ``$C^\infty$-sections'' in periodic cyclic homology are flat with respect to the Gauss--Manin connection. This allows us to express the pairing of $K$-groups and cyclic cohomology of the deformed algebra in terms of the original data and the action of group cohomology. As an application, we look at the twisted group algebras of crystallographic groups. The $K$-theory of untwisted algebra was studied in detail by L\"{u}ck and his collaborators~\citelist{\cite{MR1803230}\cite{MR3054301}\cite{MR2949238}}, and our result give a description of cyclic cohomology and its pairing with $K$-groups for twisted group C$^*$-algebras.

The paper is organized as follows. In Section 1, we recall the basic terminology and prepare notations. In Section 2, exploiting the comparison of Hochschild cohomology of group algebra and group cohomology~\cite{MR814144}, we construct traces on twisted group algebras which are supported on the conjugacy classes of torsion elements. This leads to isomorphism between the cyclic cohomology groups of general $2$-cocycle deformation of graded algebras over such conjugacy classes, solving the problem (1). Section 3 is the main part of the paper, where we compute the monodromy of the Gauss--Manin connection with respect to the identification established in Section 2. In Section 4, we establish $K$-theoretic comparison between Fr\'{e}chet algebraic models and operator algebraic models, as a basis to compute the pairing of operator algebraic $K$-theory and cyclic cohomology of ``smooth'' algebras. In Section 5, we apply our general theory to twisted algebras of crystallographic groups.

\medskip
Acknowledgements. The authors would like to thank Wolfgang L\"{u}ck and Ryszard Nest for fruitful comments. They also thank the following programmes / grants for their support which enabled collaboration for this paper: Simons - Fundation grant 346300 and the Polish Government MNiSW 2015-2019 matching fund; The Isaac Newton Institute for Mathematical Sciences for the programme \textit{Operator algebras: subfactors and their applications}. This work was supported by: EPSRC grant number EP/K032208/1 and DFG (SFB 878). M.Y. thanks the operator algebra group at University of M\"{u}nster for their hospitality during his stay, and Yasu Kawahigashi for financial support (JSPS KAKENHI Grant Number 15H02056).

\section{Preliminaries}

\subsection{Noncommutative calculus}
\label{sec:nc-calc}

Let us first review the fundamental concepts in cyclic homology theory. Basic references are~\citelist{\cite{MR2052770-Cuntz}\cite{MR2052770-Tsygan}}.

Let $A$ be a unital $\C$-algebra, and put $C_n(A) = A^{\otimes n+1}$. Recall that the \emph{Hochschild differential} $C_n(A) \to C_{n-1}(A)$ is given by
\begin{equation}
\label{eq:hochs-diff-def}
 b(a_0 \otimes \cdots \otimes a_n) = \sum_{i=0}^{n-1} (-1)^i a_0 \otimes \cdots \otimes a_i a_{i+1} \otimes \cdots \otimes a_n + (-1)^n a_n a_0 \otimes a_1 \otimes \cdots \otimes a_{n-1},
\end{equation}
while the \emph{Rinehart--Connes differential} $C_n(A) \to C_{n+1}(A)$ is
\begin{multline*}
B(a_0 \otimes \cdots \otimes a_n) = \sum_{i=0}^n (-1)^{ni} \bigl( 1 \otimes a_{n+1-i} \otimes \cdots \otimes a_n \otimes a_0 \otimes \cdots \otimes a_{n-i} \\
+ (-1)^n a_{n+1-i} \otimes \cdots \otimes a_n \otimes a_0 \otimes \cdots \otimes a_{n-i} \otimes 1 \bigr).
\end{multline*}

The \emph{reduced} Hochschild complex of $A$ is given by $\bar{C}_n(A) = A \otimes (\overline{A})^{\otimes n}$, where $\overline{A} = A / \C$. The kernel of quotient map $C_n(A) \to \bar{C}_n(A)$ is stable under $b$, and is contractible (in the sense of complex). Thus, $(\bar{C}_*, b)$ also computes the Hochschild homology of $A$, and its linear dual complex $\bar{C}^*(A)$ computes  the Hochschild cohomology. On this model $B$ is given by
$$
B\colon \bar{C}_n(A) \to \bar{C}_{n+1}(A), \quad a_0 \otimes \cdots \otimes a_n \to \sum_{i=0}^n (-1)^{n i} 1 \otimes a_i \otimes \cdots \otimes a_n \otimes a_0 \otimes \cdots \otimes a_{i-1}.
$$
We also denote the transpose maps on $\bar{C}^*(A)$ by $b$ and $B$.

The \emph{$A$-valued Hochschild $n$-cochains} are by definition elements of $C^n(A; A) = \Hom_\C(A^{\otimes n}, A)$. The direct sum with degree shift $C^{*-1}(A; A)$ is a graded pre-Lie algebra by partial composition $D \circ D'$, and the product map of $A$ regarded as $\mu \in C^2(A; A)$ gives the Hochschild differential $\delta(D) = [\mu, D]$ by the graded commutator. Given such a cochain $D \in C^m(A; A)$, following~\cite{MR1261901} (but in the convention of~\cite{arXiv:1207.6687}) we consider the operators
\begin{multline*}
% \label{eq:Bop-single-D-defn}
\Bop_D(a_0 \otimes \cdots \otimes a_n) \\
= \sum_{\substack{0 \le j \le k \\ \le n-m}} (-1)^{n(j+1) + m(k-j)} 1 \otimes a_{j+1} \otimes \cdots \otimes D(a_{k+1} \otimes \cdots \otimes a_{k+m}) \otimes \cdots \otimes a_n \otimes a_0 \otimes \cdots \otimes a_{j}.
\end{multline*}
and
\begin{equation*}
% \label{eq:bop-single-D-defn}
\bop_{D}(a_0 \otimes \cdots \otimes a_n) = (-1)^{m n} D(a_{n-m + 1} \otimes \cdots \otimes a_n) \otimes a_0 \otimes \cdots \otimes a_{n-m}.
\end{equation*}
acting on $C_*(A)$. Then $\iota_D = \bop_D - \Bop_D$ is called the \emph{interior product} by $D$. Another important operation is the \emph{Lie derivative}
\begin{multline*}
%\label{eq:L-D-formula}
L_{ D } (a_0 \otimes \ldots \otimes a_n) = \sum_{j=0}^{n-m} (-1)^{m (j+1)} a_0\otimes \cdots \otimes D(a_{j+1} \otimes \cdots \otimes a_{j+m}) \otimes \cdots \otimes a_n \\
+ \sum_{j=n-m+1}^n (-1)^{n(n- j)} D(a_{j+1} \otimes \cdots \otimes a_0 \otimes a_1 \otimes \cdots \otimes a_{j+m-n-1}) \otimes a_{j+m-n} \otimes \cdots \otimes a_j.
\end{multline*}
Let us denote by $\bar{C}^n(A; A)$ the space of cochains with $D(a_1 \otimes \cdots \otimes a_n) = 0$ whenever $a_i = 1$ for some $i$.  This is a subcomplex for $\delta$, and $\iota_D$ and $L_D$ are operators on $\bar{C}_*(A)$ when $D$ is in this space. We then have the following \emph{Cartan homotopy formula} relating these two kinds of operators.

\begin{proposition}[\citelist{\cite{MR0154906}\cite{MR1261901}\cite{arXiv:1207.6687}*{Proposition~4}}]
When $D \in \bar{C}^m(A; A)$, we have
$$
[b - B, \iota_D] = L_D - \iota_{\delta(D)},
$$
where the left hand side is the graded commutator in $\End(\bar{C}_*(A))$.
\end{proposition}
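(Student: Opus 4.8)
The plan is to prove the Cartan homotopy formula $[b-B, \iota_D] = L_D - \iota_{\delta(D)}$ by splitting the graded commutator along the decomposition $b - B$ on the chain side and $\bop_D - \Bop_D$ on the operator side, and matching the resulting pieces separately. Concretely, I would expand
$$
[b - B, \bop_D - \Bop_D] = [b, \bop_D] - [b, \Bop_D] - [B, \bop_D] + [B, \Bop_D],
$$
and organize the verification around two ``halves'': the $b$-part should produce the Hochschild-type identities $[b, \bop_D] = \bop_{\delta D} + (\text{something})$ and the $B$-part should produce the Connes-type identities. The classical special case $D = \mu$ (so $\iota_\mu = B$ up to sign and $L_\mu$ is related to the degree operator) already suggests how the signs must line up, and serves as a sanity check on conventions; here the work is to carry this through for arbitrary $D \in \bar C^m(A;A)$.

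First I would treat the purely Hochschild part: show that $[b, \bop_D] - \bop_{\delta D}$ equals $L_D^{(1)}$, the ``first sum'' in the definition of $L_D$ (the terms where $D$ is inserted in an interior slot $a_{j+1}\otimes\cdots\otimes a_{j+m}$ with $j \le n-m$), up to the boundary terms that will get absorbed elsewhere. This is the standard computation showing $\bop$ and $b$ fit into the pre-Lie/brace structure; the insertion of $a_n a_0$ from the last term of $b$ is what generates the ``wrap-around'' terms. Next I would do the mixed terms $[b, \Bop_D]$ and $[B, \bop_D]$: here one expects massive cancellation, since $\Bop_D$ and $\bop_D$ both involve a spare $1$ tensor factor and the two cross-commutators should cancel against each other except for a residue that reconstructs the second sum $L_D^{(2)}$ of $L_D$ (the terms where $D$ straddles the cyclic junction, eating $a_{j+1}\otimes\cdots\otimes a_0\otimes\cdots$). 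Finally, $[B, \Bop_D]$ should give $-\Bop_{\delta D}$ together with leftover terms; combining $-\bop_{\delta D} + \Bop_{\delta D} = -\iota_{\delta D}$ and $L_D^{(1)} + L_D^{(2)} = L_D$ then yields the claim. Throughout, one uses crucially that $D$ and the $A$-algebra product are being applied to reduced chains, so that any term in which a $1$ lands inside $D$ or inside a product $a_i a_{i+1}$ vanishes, which is exactly what kills the otherwise unmanageable correction terms.

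The main obstacle, as usual with Cartan-type formulas in cyclic homology, is bookkeeping: tracking the signs $(-1)^{n(j+1)+m(k-j)}$, $(-1)^{mn}$, and the $(-1)^{ni}$ from $B$ simultaneously, and making sure the reindexing used to cancel a term from, say, $[b,\Bop_D]$ against one from $[B,\bop_D]$ is compatible with all three sign conventions at once. A clean way to control this is to introduce the cyclic operator $t$ and the ``extra degeneracy'' and express $\bop_D, \Bop_D, b, B$ in terms of $t$ and a single front-face/insertion operator, so that the commutators become telescoping sums of conjugates of one operator by powers of $t$; then most cancellations are visible as $t^k - t^{k}$ collapses rather than term-by-term sign chases. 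Since the statement is quoted from \cite{MR1261901} and \cite{arXiv:1207.6687}*{Proposition~4}, I would in practice only need to check that the present sign conventions (those of \cite{arXiv:1207.6687}) are internally consistent with the definitions of $\bop_D$, $\Bop_D$, $L_D$ given above, and otherwise cite the computation; the expected length of a fully spelled-out proof is a couple of pages of sign-tracking with no conceptual surprises.
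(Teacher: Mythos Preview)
The paper does not prove this proposition at all: it is stated with attribution to \cite{MR0154906}, \cite{MR1261901}, and \cite{arXiv:1207.6687}*{Proposition~4}, and immediately used without further argument. So there is nothing to compare your proposal against in the paper itself.

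That said, your outline is the standard route taken in the cited sources (particularly Getzler and the second author's preprint): decompose $[b-B,\iota_D]$ along $\iota_D=\bop_D-\Bop_D$, identify $[b,\bop_D]$ with the Hochschild piece of $L_D$ plus $\bop_{\delta D}$, and show that the cross terms $[b,\Bop_D]$ and $[B,\bop_D]$ together with $[B,\Bop_D]$ reconstruct the wrap-around part of $L_D$ and $\Bop_{\delta D}$, using the reduced-complex hypothesis to kill terms where a $1$ lands in an argument of $D$. Your remark that one can organize the cancellation via powers of the cyclic operator $t$ is exactly the device used in \cite{arXiv:1207.6687} to keep the signs under control. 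In short, your plan is correct and matches the literature the paper defers to; for the purposes of this paper a one-line citation is all that is expected.
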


Let us recall the formalism of \emph{Gauss--Manin connection} on periodic cyclic (co)homology~\cite{MR1261901}. Suppose $(\mu_t)_{t\in I}$ is a `smooth' family of associative product structures parametrized by $I = [0, 1]$ with common multiplicative unit on $A$. We then consider $\Gamma^\infty(I; A)$ which is, as a linear space, the union of smooth functions from $I$ into finite dimensional subspaces of $A$. The product structure is given pointwise by $\mu_t$, that is $(f * f')(t) = \mu_t(f(t), f'(t))$ for $f, f' \in \Gamma^\infty(I; A)$ (so we require a regularity condition on $\mu_t$ to make this well-defined).

Then $\partial_t$ is a Hochschild $1$-cochain of $\Gamma^\infty(I; A)$ which is not a derivation unless $\mu_t$ is constant in $t$. Note that $\delta(\partial_t) = [\mu, \partial_t]$ is up to sign equal to the time derivative of product structure. Hence the Cartan homotopy formula says
$$
[b - B, \iota_{\partial_t}] = L_{\partial_t} + \iota_{\dot{m}_t}.
$$
as operators on $\bar{C}_*(\Gamma^\infty(I; A))$. Note also that $L_{\partial_t}$ is the natural extension of $\partial_t$ to chains by the Leibniz rule.

\subsection{Standard complex for group cohomology}
\label{sec:std-cplx-grp-cohom}

Let $G$ be a discrete group, and $M$ be a commutative group ($\R$, $\C$, $\T = \R/\Z$, or $\C^\times$ for our purposes). The group cohomology $H^*(G; M)$ is equal to the cohomology of the \emph{standard complex} $(C^*(G; M), d)$, where $C^n(G; M) = \Map(G^{n}, M)$ and $d\colon C^n(G; M) \to C^{n+1}(G; M)$ is given by
\begin{multline}
\label{eq:std-cplx-diff}
d \phi(g_1, \ldots, g_{n+1}) = \phi(g_2, \ldots, g_{n+1}) \\
+ \sum_{i=1}^n (-1)^{i} \phi(g_1, \ldots, g_i g_{i+1}, \ldots, g_{n+1}) + (-1)^{n+1} \phi(g_1, \ldots, g_n).
\end{multline}
We formally put $C^0(G; M) = \Map(*, M) = M$ and $d$ to be the zero map from $C^0(G; M)$ to $C^1(G; M)$. Consequently, $H^0(G; M) = M$ and the $2$-cocycles are $2$-point functions satisfying
$$
\phi(h, k)  + \phi(g, h k) = \phi(g, h) + \phi(g h, k).
$$

If $G$ is a finite group, one has $H^n(G; \C) = 0$ for $n > 0$. This is a consequence of the identification $H^*(G;\C) \simeq \Ext_G^*(\C, \C)$ and the semisimplicity of $G$-modules, but more concretely we can use Shapiro's lemma type argument: for example, given any $2$-cocycle $\omega_0$, one defines $\psi \in C^1(G; \C)$ by
$$
\psi(g) = \frac{1}{\absv{G}} \sum_{h\in G} \omega_0(h, g).
$$
Then one can verify $d \psi(g, h) = \omega_0(g, h)$ by concrete computation using the $2$-cocycle identity.

Obviously, $C^*(G) = C^*(G; \C)$ is the dual complex of $C_*(G) = (\C[G^n])_{n=0}^\infty$, endowed with differential $d\colon C_{n+1}(G) \to C_n(G)$ analogous to that of $C^*(G)$.

A group cochain $\phi \in C^n(G)$ is \emph{normalized} if 
$$
\phi(g_1, \ldots, g_i, e, g_{i+1}, \ldots, g_{n-1}) = 0 \quad (0 \le i \le n-1)
$$
and $\phi(g_1, \ldots, g_n) = 0$ whenever $g_1\cdots g_n = e$. (Note that this requirement is stronger than the usual one found for example in~\cite{MR1324339}.)

\begin{proposition}[\citelist{\cite{MR732839}\cite{MR780077}*{Section~4}}]
\label{prop:group-cocycle-normalization}
When $K$ is a field of characteristic zero and $n > 0$, any $K$-valued $n$-cocycle is cohomologous to a normalized one.
\end{proposition}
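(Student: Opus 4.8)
The plan is to reduce the statement to the two cited constructions by a standard double-complex / averaging argument. Let $\phi \in C^n(G; K)$ be a cocycle. I would first handle the part of normalization that concerns inserting the identity among the arguments, i.e.\ producing a cocycle $\phi'$ cohomologous to $\phi$ with $\phi'(g_1,\dots,g_i,e,g_{i+1},\dots,g_{n-1}) = 0$. This is the classical fact that the unnormalized bar complex and the normalized bar complex are quasi-isomorphic; concretely, the subcomplex $D_*\subseteq C_*(G)$ spanned by degenerate simplices (those with some $g_i = e$) is acyclic and is a direct summand as a graded space, so dually any cocycle can be corrected by a coboundary to kill its restriction to $D_*$. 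I would simply invoke~\cite{MR1324339} for this, or spell out the standard explicit homotopy $s$ on the bar complex and set $\phi' = \phi - d(\phi\circ s)$ with a suitable iteration over the $n$ slots.

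The genuinely new requirement here — the one that makes this stronger than the usual normalization — is the condition $\phi'(g_1,\dots,g_n) = 0$ whenever $g_1\cdots g_n = e$. This is exactly what the cited work of~\cite{MR732839} and~\cite{MR780077}*{Section~4} supplies: over a field of characteristic zero one has an explicit $\mathrm{SL}_2$- (or more precisely, a contracting-homotopy-) type averaging that modifies a cocycle within its cohomology class so that it vanishes on the ``trivial product'' locus. I would recall that construction: one uses the simplicial structure on $G^{\bullet}$ together with the retraction of the locus $\{g_1\cdots g_n = e\}$, or equivalently a homotopy operator built from the coface maps, and applies it after the first normalization step. Since characteristic zero is used only to divide by the relevant integers appearing in this averaging (as in the Shapiro-type computation $\psi(g)=\frac{1}{|G|}\sum_h\omega_0(h,g)$ recalled just above for finite $G$, but here for the free/infinite situation the divisors are combinatorial rather than $|G|$), the hypothesis on $K$ is exactly what is needed.

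I would then check that the two steps are compatible: performing the second correction can reintroduce degeneracies, so one verifies either that the homotopy used in~\cite{MR780077}*{Section~4} already preserves (or can be chosen to preserve) vanishing on slots containing $e$, or else one alternates the two procedures and argues convergence — but in fact both corrections are given by coboundaries $d(\eta_1), d(\eta_2)$ with $\eta_1,\eta_2$ supported so that the combined correction $d(\eta_1+\eta_2)$ lands $\phi$ in the intersection of the two vanishing conditions. The only real obstacle is bookkeeping: making sure the explicit cochains $\eta_1$, $\eta_2$ are chosen so that their effects do not interfere, and that the resulting $\phi'$ is still a cocycle (automatic, since we only ever subtract coboundaries) cohomologous to $\phi$ (also automatic). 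Given that both ingredients are quoted from the literature, the ``proof'' is essentially the observation that their composite does the job, together with the remark that $n>0$ is needed so that $d$ on $C^{n-1}$ is available to absorb the corrections.
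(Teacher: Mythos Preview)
The paper gives no proof of this proposition at all: it is stated with citations to Karoubi and to Loday--Quillen, Section~4, and left at that. So there is nothing to compare your argument against except those references themselves.

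Your two-step outline (first pass to the ordinarily normalized subcomplex, then impose the extra vanishing on the locus $g_1\cdots g_n = e$) is the right shape, and invoking the cited sources for the second step is exactly what the paper does. That said, a couple of your explanatory remarks are off. There is no ``$\mathrm{SL}_2$-type averaging'' in play here; that phrase does not correspond to anything in Karoubi or Loday--Quillen, and you should drop it. What is actually used is the cyclic symmetry of the bar complex: in characteristic zero one can average over the cyclic permutation $t$ of order $n+1$ (this is where dividing by integers enters), and this is what produces a cocycle vanishing on tuples whose product is trivial. Your remark that the divisors are ``combinatorial rather than $|G|$'' is correct in spirit but would be clearer if you said outright that one divides by $n+1$ (or $n!$, depending on the formulation), not by the order of the group.

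Your compatibility discussion in the last paragraph is also more worried than it needs to be: the standard way to organize this is not to alternate two corrections and hope they stabilize, but to observe that the normalized-and-product-trivial chains form a subcomplex whose inclusion into the full bar complex is a quasi-isomorphism (this is what the cited references establish), so dually every cocycle is cohomologous to one in the annihilator. Framing it that way removes the ``interference'' issue entirely.
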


While the above does not immediately apply to $\T$- or $\C^\times$-valued cocycles, a direct argument still shows $2$-cocycles with these coefficients can still be normalized. Let us also note the following cyclicity of normalized cocycles.

\begin{proposition}
\label{prop:cycl-normaliz-coc}
Let $\phi$ be a normalized $n$-cocycle on $G$. Then we have
$$
\phi((g_1 \cdots g_n)^{-1}, g_1, \ldots, g_{n-1}) = (-1)^n \phi(g_1, \ldots, g_n).
$$
\end{proposition}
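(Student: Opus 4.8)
The plan is to exploit the cocycle identity together with the two parts of the normalization hypothesis (vanishing when an entry is $e$, and vanishing when the product of the entries is $e$) to relate $\phi$ evaluated on the cyclically permuted tuple to $\phi$ evaluated on the original tuple. Write $g = g_1 \cdots g_n$, so the claim is $\phi(g^{-1}, g_1, \ldots, g_{n-1}) = (-1)^n \phi(g_1, \ldots, g_n)$. The natural device is to evaluate the coboundary $d\phi$ on a carefully chosen $(n+1)$-tuple and use $d\phi = 0$.

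First I would apply the cocycle identity $d\phi = 0$ to the tuple $(g^{-1}, g_1, g_2, \ldots, g_n)$ of length $n+1$. Expanding via \eqref{eq:std-cplx-diff}, the leading term is $\phi(g_1, \ldots, g_n)$; the final term is $(-1)^{n+1}\phi(g^{-1}, g_1, \ldots, g_{n-1})$, which is exactly the quantity we want to control; and the intermediate terms are $(-1)^i \phi(g^{-1}, g_1, \ldots, g_i g_{i+1}, \ldots, g_n)$ for $1 \le i \le n$, together with the $i = n$ term $(-1)^n \phi(g^{-1}, g_1, \ldots, g_{n-1}, g_n)$ — wait, more carefully: the $i=1$ term has $g^{-1} g_1$ merged. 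I would then observe that the tuple $(g^{-1}, g_1, \ldots, g_{n-1}, g_n)$ has product $g^{-1} g = e$, so \emph{that} term vanishes by normalization, and similarly the term where the merge produces a string multiplying to $e$ needs checking. So after discarding the normalized-to-zero pieces, the identity $d\phi = 0$ collapses to a relation between $\phi(g_1,\ldots,g_n)$, $\phi(g^{-1}, g_1, \ldots, g_{n-1})$, and a shorter list of terms of the form $\phi(g^{-1}, g_1, \ldots, g_i g_{i+1}, \ldots, g_n)$.

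To finish, I expect to need an \emph{induction on $n$}. The terms $\phi(g^{-1}, g_1, \ldots, g_i g_{i+1}, \ldots, g_n)$ are $n$-cocycles evaluated on $n$-tuples whose first entry is the inverse of the product of the remaining $n-1$ entries — that is, they are again of the "cyclically shifted" shape but one degree lower, so the inductive hypothesis (in the form: $\phi$ restricted to appropriate faces is still normalized, and shifting the first entry to the end costs a sign) applies. The base case $n = 1$ is immediate: normalization forces $\phi(g_1) = 0$ whenever... actually $\phi(g_1^{-1}) $ versus $-\phi(g_1)$ for a $1$-cocycle means $\phi$ is a homomorphism and $\phi(g^{-1}) = -\phi(g)$, which is standard. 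The honest bookkeeping — checking that every term I claim vanishes genuinely has either a repeated-$e$ pattern or a product equal to $e$, and that the signs $(-1)^i$ telescope to give exactly $(-1)^n$ — is the main obstacle; it is purely combinatorial but must be done with care, and it is cleanest to organize it as: (i) peel off the cyclic shift one step at a time, each step costing a single sign $(-1)$ via the two-entry case of the cocycle identity with normalization, and (ii) iterate $n$ times. An alternative, possibly slicker route is to pass to the homogeneous bar resolution, where a normalized cocycle is $G$-invariant and alternating-like, and the statement becomes a transparent symmetry of the homogeneous cochain under the cyclic rotation of the simplex $[e, g_1, g_1g_2, \ldots, g_1\cdots g_n]$; I would try this reformulation first, since it may reduce the sign count to a single application of the relation $h^{-1}(x_0, \ldots, x_n)$ being determined by $G$-invariance.
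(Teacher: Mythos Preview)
Your starting move --- evaluating $d\phi$ on $(g^{-1}, g_1, \ldots, g_n)$ with $g = g_1\cdots g_n$ --- is exactly the paper's proof. But you are missing the one observation that makes it a one-liner, and as a result you propose an induction that is both unnecessary and, as stated, ill-formed.

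Here is the point you overlook. In the expansion of $d\phi(g^{-1}, g_1, \ldots, g_n) = 0$, every intermediate term (the $i = 1, \ldots, n$ terms, where two adjacent entries are merged) is $\phi$ evaluated on an $n$-tuple whose product is still $g^{-1} g_1 \cdots g_n = e$, because merging adjacent entries does not change the overall product. The second normalization condition (vanishing when the product of the arguments is $e$) therefore kills \emph{all} of these terms at once. What remains is
\[
0 = \phi(g_1, \ldots, g_n) + (-1)^{n+1}\phi(g^{-1}, g_1, \ldots, g_{n-1}),
\]
which is the claim. You actually wrote down the key fact yourself when you said the intermediate tuples have ``first entry equal to the inverse of the product of the remaining $n-1$ entries'' --- that is precisely the statement that the full product is $e$ --- but you then reached for an inductive hypothesis instead of invoking normalization directly.

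The induction you sketch does not work as written: the intermediate terms are still $n$-tuples for the same $n$-cocycle $\phi$, not something ``one degree lower,'' so there is no smaller case to appeal to. The homogeneous-resolution route you mention at the end could be made to work, but it is strictly more effort than the two-line direct argument above.
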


\begin{proof}
Compute the value of $0 = d \phi$ on $((g_1 \cdots g_n)^{-1}, g_1, \ldots, g_n)$ and use the normalization condition.
\end{proof}

\subsection{Cyclic cocycles on group algebra}
\label{sec:cyc-coc-grp-alg}

Let us now describe the cyclic theory for group algebra $\C[G]$, which is essentially from~\cite{MR814144} on cyclic homology. Its adaptation to cyclic cohomologies is already explained in~\cite{MR1246282}, but we will later need explicit constructions in this section.

For each $x \in G$, let us denote the centralizer of $x$ in $G$ by $C_G(x)$, and the conjugacy class of $x$ by $\Ad_G(x)$. 

Let $C_n^x(\C[G])$ be the subspace of $C_n(\C[G]) = \C[G]^{\otimes n+1}$ spanned by those $g_0 \otimes \cdots \otimes g_n$ such that $g_0\cdots g_n \in \Ad_G(x)$. The spaces $(C_n^x(\C[G]))_{n=0}^\infty$ give a subcomplex for $b$ and $B$, which leads to the direct sum decomposition
\begin{align*}
\HH_*(\C[G]) &= \bigoplus_{x \in G/_{\Ad} G} H_*(C_*^x(\C[G]), b),&
\HC_*(\C[G]) &= \bigoplus_{x \in G/_{\Ad} G} H^*(\CC(C_*^x(\C[G])), b-B).
\end{align*}
Dualizing this, we also obtain the direct product decomposition
\begin{align*}
\HH^*(\C[G]) &= \prod_{x \in G/_{\Ad} G} H^*(C_*^x(\C[G])', b),&
\HC^*(\C[G]) &= \prod_{x \in G/_{\Ad} G} H^*(\CC(C_*^x(\C[G]))', b-B),
\end{align*}
and so on. Let us denote the factors labeled by $x$ by $\HH^*(\C[G])_x$ and  $\HC^*(\C[G])_x$. We want to describe them in terms of group cohomology.

From now on let us fix $x$, and also choose and fix $g^y = g_x^y \in G$ such that $(g^y)^{-1} x g^{y} = y$ for each element $y$ in $\Ad_G(x)$.

Let $(g_0, \ldots, g_n) \in G^{n+1}$ be such that $g_0 \cdots g_n \in \Ad_G(x)$. We put
$$
y_i = g_i \cdots g_n g_0 \cdots g_{i-1}.
$$
Note that these elements are also in $\Ad_G(x)$.

\begin{lemma}
The elements $g^{y_i} g_i (g^{y_{i+1}})^{-1}$ for $0 \le i < n$, and $g^{y_n} g_n (g^{y_0})^{-1}$ are in $C_G(x)$.
\end{lemma}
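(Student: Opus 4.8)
The plan is to verify each claimed membership by a direct computation, using the defining relations $(g^{y})^{-1} x g^{y} = y$ and the definition $y_i = g_i \cdots g_n g_0 \cdots g_{i-1}$. The key observation is that consecutive $y_i$'s are related by conjugation by $g_i$: from the cyclic-word description one reads off
$$
g_i^{-1} y_i g_i = g_i^{-1}(g_i \cdots g_n g_0 \cdots g_{i-1}) g_i = g_{i+1} \cdots g_n g_0 \cdots g_{i-1} g_i = y_{i+1}
$$
for $0 \le i < n$, and similarly $g_n^{-1} y_n g_n = y_0$ (here one uses that $g_0 \cdots g_n \in \Ad_G(x)$ so that all the $y_i$, including $y_0$, make sense as elements of $\Ad_G(x)$; in fact $y_0 = g_0 \cdots g_n$).

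Now fix $0 \le i < n$ and set $c_i = g^{y_i} g_i (g^{y_{i+1}})^{-1}$. I would compute the conjugation action of $c_i$ on $x$ directly:
$$
c_i^{-1} x c_i = g^{y_{i+1}} g_i^{-1} (g^{y_i})^{-1} x g^{y_i} g_i (g^{y_{i+1}})^{-1} = g^{y_{i+1}} g_i^{-1} y_i g_i (g^{y_{i+1}})^{-1} = g^{y_{i+1}} y_{i+1} (g^{y_{i+1}})^{-1} = x,
$$
where the second equality uses $(g^{y_i})^{-1} x g^{y_i} = y_i$, the third uses the relation $g_i^{-1} y_i g_i = y_{i+1}$ established above, and the last uses $g^{y_{i+1}} y_{i+1} (g^{y_{i+1}})^{-1} = x$ (i.e.\ the inverse of the defining relation for $g^{y_{i+1}}$). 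Hence $c_i \in C_G(x)$. The boundary term $g^{y_n} g_n (g^{y_0})^{-1}$ is handled by the identical computation with $i = n$, using $g_n^{-1} y_n g_n = y_0$ in place of the generic relation.

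This argument is essentially bookkeeping, so I do not expect a genuine obstacle; the only thing requiring care is the index arithmetic in the cyclic word $y_i = g_i \cdots g_n g_0 \cdots g_{i-1}$ and keeping straight which factor $g^{y_j}$ cancels against which, especially at the wrap-around index $i = n$ where the pattern changes form. It may be worth noting for later use (in the subsequent construction of traces) that the product of all these centralizer elements, $c_0 c_1 \cdots c_{n-1} \cdot (g^{y_n} g_n (g^{y_0})^{-1})$, telescopes to $g^{y_0} (g_0 \cdots g_n) (g^{y_0})^{-1} = g^{y_0} y_0 (g^{y_0})^{-1} = x$, which records the compatibility of this assignment with the group multiplication.
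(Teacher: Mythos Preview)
Your proof is correct and follows essentially the same approach as the paper: both rely on the defining relation $(g^y)^{-1} x g^y = y$ (equivalently $x g^y = g^y y$) together with the conjugation relation $g_i^{-1} y_i g_i = y_{i+1}$ (equivalently $y_i g_i = g_i y_{i+1}$), and then verify centrality by a direct computation. Your write-up simply makes explicit the calculation that the paper summarizes in one line.
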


\begin{proof}
This follows from direct calculation using $x g^{y} = g^y y$ and $y_i g_i = g_i y_{i+1}$.
\end{proof}

We next consider the map
$$
\Xi\colon C_n^x(\C[G]) \to C_n(C_G(x)), \quad g_0 \otimes \cdots \otimes g_n \mapsto (g^{y_1} g_1 (g^{y_2})^{-1}, \ldots, g^{y_n} g_n (g^{y_0})^{-1}).
$$

\begin{proposition}
\label{prop:Xi-intertwines-b-and-d}
The map $\Xi$ intertwines the Hochschild differential $b$ on $(C_n^x(\C[G]))_{n=0}^\infty$ and $d$ on $C_*(C_G(x))$.
\end{proposition}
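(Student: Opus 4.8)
The plan is to verify the intertwining relation $\Xi \circ b = d \circ \Xi$ directly on a spanning element $g_0 \otimes \cdots \otimes g_n$ with $g_0 \cdots g_n \in \Ad_G(x)$, by matching terms one-to-one. First I would unravel the notation: writing $y_i = g_i \cdots g_n g_0 \cdots g_{i-1}$ as above, and $h_i = g^{y_i} g_i (g^{y_{i+1}})^{-1}$ for $0 \le i < n$, $h_n = g^{y_n} g_n (g^{y_0})^{-1}$, so that $\Xi(g_0 \otimes \cdots \otimes g_n) = (h_1, \ldots, h_n)$. By the preceding Lemma each $h_i$ lies in $C_G(x)$, so the right-hand sides indeed live in $C_*(C_G(x))$. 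The key bookkeeping observation is that when we multiply two adjacent $h$'s the conjugating elements telescope: $h_i h_{i+1} = g^{y_i} g_i g_{i+1} (g^{y_{i+2}})^{-1}$, and more generally a product of a consecutive block collapses to $g^{y_i}(g_i \cdots g_j)(g^{y_{j+1}})^{-1}$. This is exactly the form that $\Xi$ would assign to the term of $b$ in which $a_i, \ldots, a_j$ have been multiplied together, because multiplying $g_i \cdots g_j$ into one slot does not change any of the relevant $y$-indices outside the block.

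Next I would compare term by term. The Hochschild differential $b(g_0 \otimes \cdots \otimes g_n)$ has, for $0 \le i \le n-1$, the term $(-1)^i g_0 \otimes \cdots \otimes g_i g_{i+1} \otimes \cdots \otimes g_n$, and applying $\Xi$ to it produces a group $(n-1)$-chain whose entries are the $h_j$ for $j \ne i, i+1$ together with the merged entry $g^{y_i} g_i g_{i+1} (g^{y_{i+2}})^{-1} = h_i h_{i+1}$ in the appropriate slot; this matches the $i$-th internal face term $(-1)^i \phi(\ldots, h_i h_{i+1}, \ldots)$ of the group differential $d$ in \eqref{eq:std-cplx-diff} (or rather its dual on chains). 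The one subtlety is the wrap-around term $(-1)^n g_n g_0 \otimes g_1 \otimes \cdots \otimes g_{n-1}$ of $b$: here $g_n g_0$ sits in position $0$, the new cyclic-word rotations $y'_j$ of the shortened tuple relate to the old ones, and one checks that $\Xi$ sends this to the chain $(h_n h_1, h_2, \ldots, h_{n-1})$ — no, more precisely to a cyclic shift thereof — which is exactly what the first face map $\phi(g_2,\ldots)$ and the last face map $(-1)^{n+1}\phi(g_1,\ldots)$ of $d$ combine to give on $C_*(C_G(x))$ under the standard identification. Getting the signs $(-1)^i$ versus $(-1)^{ni}$-type factors to line up, and correctly identifying which face of $d$ the wrap-around term of $b$ corresponds to, is where I expect the real work to be.

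The main obstacle, then, is purely combinatorial: carefully tracking the reindexing of the $y_i$ when an entry is merged or when the last and first entries wrap around, and confirming that the conjugators $g^{y_i}$ telescope so that every term of $\Xi(b(-))$ is literally a term of $d(\Xi(-))$ with the matching sign, and conversely that no term is lost or doubled. A clean way to organize this is to factor $\Xi$ through the chain-level identification (as in~\cite{MR814144}) of $C_*^x(\C[G])$ with the standard complex computing $H_*(C_G(x))$ twisted by the orbit data, so that the statement reduces to the well-known fact that this identification is a chain map for $b$ versus $d$; but since the paper wants the explicit construction, I would instead present the direct term matching, relegating the sign verification to the remark (already used in the proof of Proposition~\ref{prop:cycl-normaliz-coc}) that one computes $d\phi$ on the shifted tuple and invokes the relations $x g^y = g^y y$ and $y_i g_i = g_i y_{i+1}$.
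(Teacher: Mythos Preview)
Your approach---direct term-by-term comparison of $\Xi \circ b$ with $d \circ \Xi$---is exactly what the paper does; its proof is literally the one sentence ``We can do direct comparison for each term in~\eqref{eq:hochs-diff-def} and in~\eqref{eq:std-cplx-diff}.'' Your telescoping observation $h_i h_{i+1} = g^{y_i} g_i g_{i+1} (g^{y_{i+2}})^{-1}$ is the right mechanism, and for the internal faces $1 \le i \le n-1$ your matching is correct.

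However, your treatment of the boundary terms is muddled. The wrap-around term $(-1)^n g_n g_0 \otimes g_1 \otimes \cdots \otimes g_{n-1}$ is \emph{not} subtle: since $\Xi$ never records the $0$-th slot, and for this tuple one checks $y'_0 = y_n$ and $y'_j = y_j$ for $1 \le j \le n-1$, the image under $\Xi$ is simply $(h_1, \ldots, h_{n-1})$---no $h_n$, no $h_n h_1$, no cyclic shift. This matches exactly the single last term $(-1)^n (h_1, \ldots, h_{n-1})$ of $d$. Symmetrically, the $i=0$ term $g_0 g_1 \otimes g_2 \otimes \cdots \otimes g_n$ has $y'_0 = y_0$ and $y'_j = y_{j+1}$ for $j \ge 1$, so its image is $(h_2, \ldots, h_n)$, matching the single first term of $d$. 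There is no ``combining'' of two face maps and no sign gymnastics beyond the obvious $(-1)^i$; once you compute the $y'_j$ correctly the matching is a clean bijection of $n+1$ terms with $n+1$ terms.
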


\begin{proof}
We can do direct comparison for each terms in~\eqref{eq:hochs-diff-def} and in~\eqref{eq:std-cplx-diff}.
\end{proof}

Consequently, any cocycle on $C_G(x)$ induces a Hochschild cocycle on $\C[G]$ which is supported on the conjugacy class of $x$. For example, the trivial class represented by $1 \in \C = C^0(C_G(x))$ corresponds to the trace $\tau^x(f) = \delta_{\Ad_G(x)}(f) = \sum_{g \in \Ad_G(x)} f(g)$ on $\C[G]$.

In fact, the above proposition corresponds to the fact that $\Xi$ is induced by a map of \emph{cyclic sets} from $\Gamma_*(G, x)$ to $B_*(C_G(x), x)$ in the notation of~\cite{MR1600246}. A cyclic set is a simplicial set
$$
((X_n)_{n=0}^\infty; (d^n_i\colon X_n \to X_{n-1})_{i=0}^n, (s^n_i\colon X_n \to X_{n+1})_{i=0}^n
$$
endowed with $t_n \colon X_n \to X_n$ satisfying certain compatibility conditions~\cite{MR1600246}*{Chapter 6}. On the one hand, $\Gamma_n(G, x)$ is the natural bases of $C^x_n(\C[G])$. On the other, the simplicial set $B_n(G) = G^n$ is a cyclic set by $$
t(g_1, \ldots, g_n) = ((g_1\cdots g_n)^{-1}, g_1, \cdots, g_{n-1}).
$$
The cyclic set $B_*(C_G(x), x)$ has the same underlying simplicial structure as $B_*(C_G(x))$, but the cyclic structure $t$ is given by $(x (g_1\cdots g_n)^{-1}, g_1, \cdots, g_{n-1})$ instead.

\begin{proposition}
\label{prop:hochs-dir-summand-grp-cohom}
The map $\Xi$ induces an isomorphism $\HH^*(\C[G])_x \simeq H^*(C_G(x);\C)$.
\end{proposition}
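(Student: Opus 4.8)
The plan is to promote Proposition~\ref{prop:Xi-intertwines-b-and-d} to the assertion that $\Xi$ is a chain homotopy equivalence, and then dualize over $\C$. The key is that $\Xi$ is the nerve of an equivalence of groupoids. First I would make the simplicial structure on the basis set $\Gamma_*(G,x)$ of $C^x_*(\C[G])$ explicit: the assignment $g_0 \otimes \cdots \otimes g_n \mapsto (y_1; g_1, \ldots, g_n)$, with $y_1 = g_1 \cdots g_n g_0 \in \Ad_G(x)$, is a simplicial isomorphism of $\Gamma_*(G,x)$ onto the nerve of the translation groupoid $\mathcal{G} = G \ltimes \Ad_G(x)$ for the conjugation action $y \cdot g = g^{-1} y g$. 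Indeed, the inner faces and all degeneracies fix $y_1$ and act on the $g_i$ as in the bar complex, the $0$-th face sends $(y_1; g_1, g_2, \ldots)$ to $(y_1 \cdot g_1; g_2, \ldots)$, and the top face deletes $g_n$; the vertices of the $n$-simplex $(y_1; g_1, \ldots, g_n)$ are exactly $y_1, y_2, \ldots, y_n, y_0$ in the notation $y_i = g_i \cdots g_n g_0 \cdots g_{i-1}$. Under this identification the chosen elements $g^y$ define a functor $F \colon \mathcal{G} \to C_G(x)$ into the one-object groupoid $C_G(x)$, sending a morphism $g \colon y \to y'$ (i.e.\ $g^{-1} y g = y'$) to $g^y g (g^{y'})^{-1}$; the Lemma preceding Proposition~\ref{prop:Xi-intertwines-b-and-d} is precisely the statement that this lands in $C_G(x)$, and comparing with the displayed formula for $\Xi$ shows that $\Xi$ is exactly the nerve $N(F)$.

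Next I would verify that $F$ is an equivalence of groupoids: it is essentially surjective since $x \in \Ad_G(x)$, and fully faithful since for $y, y' \in \Ad_G(x)$ the map $\{ g \in G \mid g^{-1} y g = y' \} \to C_G(x)$, $g \mapsto g^y g (g^{y'})^{-1}$, is a bijection with inverse $c \mapsto (g^y)^{-1} c\, g^{y'}$ (using $(g^y)^{-1} x g^y = y$ and $(g^{y'})^{-1} x g^{y'} = y'$ to check this is well defined). A quasi-inverse of $F$ is the inclusion of the full subgroupoid on the object $x$, whose nerve is the simplicial map $\Theta \colon (h_1, \ldots, h_n) \mapsto x (h_1 \cdots h_n)^{-1} \otimes h_1 \otimes \cdots \otimes h_n$ into $C^x_*(\C[G])$ (all vertices of this simplex equal $x$, since the $h_i$ centralize $x$). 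Choosing the $g^y$ so that $g^x = e$ makes $\Xi \Theta = \id$ hold on the nose, while the natural isomorphism $F \circ (\text{incl}) \cong \id_{\mathcal{G}}$ supplied by the $g^y$ induces a simplicial homotopy between $\Theta \Xi$ and the identity. Passing to $\C$-linear chains, $\Xi$ becomes a chain homotopy equivalence $(C^x_*(\C[G]), b) \to (C_*(C_G(x)), d)$; since over a field the cohomology of the dual of a complex of vector spaces is the dual of its homology, the transpose $\Xi^*$ then yields the isomorphism $\HH^*(\C[G])_x = H^*(C^x_*(\C[G])', b) \cong H^*(C_G(x); \C)$.

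The substance is purely bookkeeping: matching the cyclic reindexing used to define $\Xi$ against the face maps of the nerve of $\mathcal{G}$, and checking that $\Theta$ is simplicial, that $\Xi \Theta = \id$, and that $\Theta \Xi$ is simplicially homotopic to the identity. One can also bypass groupoids entirely: take the formula for $\Theta$ above together with an explicit simplicial contracting homotopy as given data, and verify $b$-compatibility and the homotopy identity by the same kind of termwise computation used in Proposition~\ref{prop:Xi-intertwines-b-and-d}. That verification is the main --- though entirely routine --- obstacle; alternatively, the statement is contained in Burghelea's computation~\cite{MR814144}.
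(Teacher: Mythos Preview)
Your argument is correct and is essentially the paper's own proof recast in groupoid language: the paper's inverse $\Upsilon$ is your $\Theta$, and its explicit chain homotopy $h_n = \sum_i (-1)^{n+i+1}\theta_i$ in~\eqref{eq:chain-homotopy} is precisely the simplicial homotopy arising from the natural isomorphism supplied by the $g^y$. One caveat worth noting: the paper later needs the explicit low-degree piece of this homotopy to write down $\xi$ in~\eqref{eq:xi-prec-form} (cf.\ Example~\ref{ex:Z2-by-Z3-traces}), so for the purposes of the sequel the concrete formulas cannot be entirely suppressed.
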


\begin{proof}
This corresponds to the first half of~\cite{MR814144}*{Theorem I'}. More concretely, consider a map $\Upsilon\colon C_*(C_G(x)) \to C^x_*(\C[G])$ defined by
$$
\Upsilon(g_1, \ldots, g_n) =  (g_1 \cdots g_n)^{-1} x \otimes g_1 \otimes \cdots \otimes g_n.
$$
Then, on the one hand, $\Xi \Upsilon$ is equal to the identity map on $C_*(C_G(x))$. On the other, $\Upsilon \Xi$ can be computed as
$$
g_0 \otimes \cdots \otimes g_n \mapsto g^{y_0} g_0 (g^{y_1})^{-1} \otimes g^{y_1} g_1 (g^{y_2})^{-1} \otimes \ldots \otimes g^{y_n} g_n (g^{y_0})^{-1}.
$$
This map is homotopic to the identity on $C^x_*(\C[G])$ as follows~\cite{MR1711612}*{Section~III.2}. Put
\begin{align*}
\theta_0(g_0 \otimes \cdots \otimes g_n) &= g^{y_0} g_0 \otimes g_1 \otimes \cdots \otimes g_n \otimes (g^{y_0})^{-1},\\
\theta_1(g_0 \otimes \cdots \otimes g_n) &= g^{y_0} g_0 \otimes g_1 \otimes \cdots \otimes g_{n-1} \otimes (g^{y_n})^{-1} \otimes g^{y_n} g_n (g^{y_0})^{-1}, \ldots\\
\theta_n(g_0 \otimes \cdots \otimes g_n) &= g^{y_0} g_0 \otimes (g^{y_1})^{-1} \otimes g^{y_1} g_1 (g^{y_2})^{-1}\otimes \cdots \otimes g^{y_n} g_n (g^{y_0})^{-1}.
\end{align*}
Then the alternating sum
\begin{equation}
\label{eq:chain-homotopy}
h_n = \sum_{i=0}^n (-1)^{n+i+1} \theta_i \colon C^x_n(\C[G]) \to C^x_{n+1}(\C[G])
\end{equation}
satisfies $b h_n + h_{n-1} b = \iota - \Upsilon \Xi$.

Consequently, $\Xi$ and $\Upsilon$ induce isomorphism of cohomology between the dual complexes.
\end{proof}

\begin{lemma}
Let $\phi$ be a normalized $n$-cocycle on $C_G(x)$. The Hochschild cocycle $\phi \circ \Xi$ on $\C[G]$ descends to $\bar{C}_n(\C[G])$.
\end{lemma}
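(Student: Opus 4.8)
The plan is to reduce the statement to a one-line computation. First I would make ``descends to $\bar{C}_n(\C[G])$'' explicit: since the group elements form a basis of $\C[G]$ and $\C\cdot 1$ is the line spanned by the unit $e\in G$, the kernel of the reduction map $C_n(\C[G])\to\bar{C}_n(\C[G]) = \C[G]\otimes(\overline{\C[G]})^{\otimes n}$ is spanned by the basis tensors $g_0\otimes\cdots\otimes g_n$ having $g_i = e$ for at least one index $1\le i\le n$. Hence it suffices to show $\phi\bigl(\Xi(g_0\otimes\cdots\otimes g_n)\bigr) = 0$ whenever $g_0\cdots g_n\in\Ad_G(x)$ and some $g_i$ with $1\le i\le n$ equals $e$; by linearity $\phi\circ\Xi$ then factors through $\bar{C}_n(\C[G])$ (equivalently, through the reduced complex of the $x$-summand).

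Next I would compute the relevant coordinate of $\Xi$. With the cyclic convention $y_{n+1} = y_0$, the $i$-th coordinate of $\Xi(g_0\otimes\cdots\otimes g_n)$ is $g^{y_i} g_i (g^{y_{i+1}})^{-1}$ for every $1\le i\le n$ (this is literal for $i<n$, and for $i=n$ it is the last coordinate $g^{y_n}g_n(g^{y_0})^{-1}$). The relation $y_i g_i = g_i y_{i+1}$ --- the identity behind the Lemma preceding Proposition~\ref{prop:Xi-intertwines-b-and-d}, valid also at $i=n$ since $y_n g_n = g_n y_0$ --- shows that the hypothesis $g_i = e$ forces $y_i = y_{i+1}$, hence $g^{y_i} = g^{y_{i+1}}$, so that this coordinate equals $g^{y_i}(g^{y_i})^{-1} = e$. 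Since $\phi$ is a normalized cocycle on $C_G(x)$ it vanishes on any $n$-tuple one of whose entries is $e$, so $\phi\bigl(\Xi(g_0\otimes\cdots\otimes g_n)\bigr) = 0$, as needed.

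I do not expect a genuine obstacle. The only points that need care are the identification of the kernel of the reduction map in the group-algebra model --- where one uses that reduction merely kills the span of $e$, so that it suffices to test on basis tensors --- and the bookkeeping of the cyclic index at $i=n$, where the relevant instance of the conjugation relation reads $y_n g_n = g_n y_0$ rather than involving an undefined $y_{n+1}$. Note also that only the simplicial half of the normalization hypothesis (vanishing when some argument is $e$) enters the argument; the additional clause that $\phi$ vanishes when $g_1\cdots g_n = e$ is not needed for this lemma.
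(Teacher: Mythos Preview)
Your proposal is correct and follows essentially the same argument as the paper: both observe that inserting $e$ at some slot $1\le i\le n$ forces the two adjacent cyclic products $y_i$ and $y_{i+1}$ (resp.\ $y_n$ and $y_0$) to coincide, so the corresponding coordinate of $\Xi$ is $e$ and the normalization of $\phi$ gives the vanishing. If anything, your treatment is slightly more explicit than the paper's about the boundary case $i=n$ and about which part of the normalization hypothesis is actually used.
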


\begin{proof}
We need to check
$$
\phi \circ \Xi (h_0 \otimes \cdots \otimes h_i \otimes e \otimes h_{i+1} \otimes \cdots \otimes h_{n-1}) = 0 \quad (0 \le i \le n-1)
$$
whenever $h_0 \cdots h_{n-1} \in \Ad_G(x)$.

Putting $(g_0, \ldots, g_n) = (h_0, \ldots, h_i, e, h_{i+1}, \ldots, h_{n-1})$, one has $y_{i+1} = y_{i+2}$ in the above notation and hence the $(i+1)$-th component of $\Xi(g_0, \ldots, g_n)$ is $e$. By the normalization condition on $\phi$, we have the required vanishing.
\end{proof}

Let $N_G(x)$ be the quotient of $C_G(x)$ by the subgroup $\gen{x}$ generated by $x$.

\begin{proposition}
Let $\phi$ be a normalized $n$-cocycle on $N_G(x)$. The induced cocycle $\tilde{\phi}$ on $C_G(x)$ satisfies $\tilde{\phi} \Xi B = 0$.
\end{proposition}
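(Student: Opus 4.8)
The plan is to make $\Xi B$ completely explicit on the reduced complex and to exploit that $\tilde\phi$, being pulled back along the quotient $\pi\colon C_G(x)\to N_G(x)=C_G(x)/\gen{x}$ from a normalized cocycle, kills \emph{every} tuple of elements of $C_G(x)$ whose product lies in $\gen{x}$. The crux will be that $B$ produces only chains whose leading tensor factor is the unit, and that $\Xi$ sends such a chain to a tuple of elements of $C_G(x)$ whose product is exactly $x$.

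First I would note that $\tilde\phi$ is again a normalized cocycle on $C_G(x)$, so by the preceding Lemma $\tilde\phi\circ\Xi$ descends to $\bar{C}_n(\C[G])$ and hence $\tilde\phi\,\Xi\,B$ is a well-defined functional on $\bar{C}_{n-1}(\C[G])$. Fixing $c=g_0\otimes\cdots\otimes g_{n-1}\in\bar{C}^x_{n-1}(\C[G])$, the reduced-complex formula for $B$ writes $B(c)$ as a signed sum of the chains $1\otimes g_i\otimes\cdots\otimes g_{n-1}\otimes g_0\otimes\cdots\otimes g_{i-1}$, each of which lies in $C^x_n(\C[G])$ since its product is a conjugate of $g_0\cdots g_{n-1}\in\Ad_G(x)$. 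Thus it suffices to show that $\tilde\phi\circ\Xi$ annihilates every chain $1\otimes h_1\otimes\cdots\otimes h_n\in C^x_n(\C[G])$ with leading unit. For such a chain, in the notation of the definition of $\Xi$, one has $y_0=y_1=h_1\cdots h_n$ precisely because the first tensor factor is $1$; writing $w$ for this common value (an element of $\Ad_G(x)$), the product of the entries of $\Xi(1\otimes h_1\otimes\cdots\otimes h_n)$ telescopes to $g^w(h_1\cdots h_n)(g^w)^{-1}=g^w w (g^w)^{-1}=x$, using $xg^w=g^w w$. Since $\Xi$ takes values in $C_n(C_G(x))$, this presents $\Xi(1\otimes h_1\otimes\cdots\otimes h_n)$ as an $n$-tuple $(\gamma_1,\dots,\gamma_n)\in C_G(x)^n$ with $\gamma_1\cdots\gamma_n=x$.

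Finally, $\tilde\phi(\gamma_1,\dots,\gamma_n)=\phi(\pi\gamma_1,\dots,\pi\gamma_n)$ and $\pi\gamma_1\cdots\pi\gamma_n=\pi(x)=e$ in $N_G(x)$, so this vanishes by the normalization of $\phi$ (the requirement that a normalized cochain be zero on any tuple whose entries multiply to the identity). Summing over the terms of $B(c)$ gives $\tilde\phi\,\Xi\,B=0$, in fact term by term. The only genuine work is the telescoping identity above; conceptually the key point I expect to be the main obstacle is recognizing that it is precisely the unit prepended by $B$ which forces $y_0=y_1$ and thereby pins the product of the $\Xi$-entries down to $x$ — rather than merely to a $C_G(x)$-conjugate of $x$, as happens for a general chain in $C^x_n(\C[G])$ — which is exactly why the quotient $N_G(x)$, and not $C_G(x)$ itself, is what the statement concerns.
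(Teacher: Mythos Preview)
Your proof is correct and follows the same core idea as the paper: the key observation in both is that the product of the entries of $\Xi(1\otimes h_1\otimes\cdots\otimes h_n)$ telescopes to $x$, which becomes trivial in $N_G(x)$. The only difference is cosmetic: you invoke directly the second normalization condition ($\phi(\gamma_1,\ldots,\gamma_n)=0$ when $\gamma_1\cdots\gamma_n=e$), whereas the paper first applies the cyclicity of Proposition~\ref{prop:cycl-normaliz-coc} to rotate the tuple so that $x^{-1}$ appears as an explicit entry, and then uses the first normalization condition (vanishing when an entry is $e$). Your route is marginally more direct, avoiding the detour through cyclicity.
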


\begin{proof}
We want to show the vanishing of $\phi(g^{y_j} g_j (g^{y_{j+1}})^{-1}, \ldots, g^{y_{j-1}} g_{j-1} (g^{y_j})^{-1})$ for each $j$. Using the cyclicity of Proposition~\ref{prop:cycl-normaliz-coc}, this is equal to
$$
(-1)^n \phi((g^{y_j} y_j (g^{y_j})^{-1})^{-1}, g^{y_j} g_j (g^{y_{j+1}})^{-1}, \ldots, g^{y_{j-2}} g_{j-2} (g^{y_{j-1}})^{-1}).
$$
Since $(g^{y_j} y_j (g^{y_j})^{-1})^{-1}$ is equal to $x^{-1}$, which becomes $e$ in $N_G(x)$, the normalization condition implies the vanishing of this term.
\end{proof}

It follows that normalized cocycles on $N_G(x)$ define reduced cyclic cocycles on $\C[G]$. We next want to understand if they survive in the periodic cyclic cohomology. When $x$ is of infinite order, one has $\HC^n(\C[G])_x \simeq H^n(N_G(x); \C)$. Moreover, the Gysin maps associated with the fibration of classifying spaces $S^1 = B\gen{x} \to B C_G(x) \to B N_G(x)$ defines a linear map $f\colon H^n(N_G(x); \C) \to H^{n+2}(N_G(x); \C)$ for each $k$, which is identified with the $S$-map $\HC^n(\C[G])_x \to  \HC^{n+2}(\C[G])_x$~\citelist{\cite{MR814144}*{Section IV}\cite{MR1600246}*{Chapter~7}}. It follows that the periodic cyclic cohomology class associated with a normalized $n$-cocycle $\phi$ on $N_G(x)$ only depends on its image in the inductive limit $\varinjlim_k H^{n+2k}(N_G(x);\C)$.

\begin{remark}
\label{rem:gysin-alt-pic}
The above Gysin map can be described as follows. The Hochschild--Serre spectral sequence associated with the extension $\gen{x} \to C_G(x) \to N_G(x)$ has the $E_2$-page $H^p(N_G(x); H^q(\gen{x}; \C))$, which are concentrated on $q=0, 1$. The $E_2$-differential
$$
d_2\colon H^p(N_G(x); H^1(\gen{x}; \C)) \to H^{p+2}(N_G(x); H^0(\gen{x}; \C))
$$
can be identified with maps $H^p(N_G(x); \C) \to H^{p+2}(N_G(x); \C)$ which is exactly the Gysin map. The $d_2$ is also directly described by the cup product with the extension class~\cite{MR0197526}. Namely, the central extension $\gen{x} \to C_G(x) \to N_G(x)$ corresponds to a class in $H^2(N_G(x); \gen{x})$. Identifying $\gen{x}$ with $\Z$ and embedding it in $\C$, we obtain a class $c_{G;x}$ in $H^2(N_G(x); \C)$. The cup product by $c_{G;x}$ is $d_2$. It follows that $\varinjlim_k H^{n+2k}(N_G(x);\C)$ is trivial if and only if $c_{G;x}$ represents a nilpotent element in the group cohomology of $N_G(x)$.
\end{remark}

In particular, if $N_G(x)$ has bounded cohomology degree, the conjugacy class of $x$ does not contribute in $\HP^*(\C[G])$. Let us denote the class of discrete groups such that any subgroup has bounded cohomology degree by $\cC$.

\begin{lemma}
\label{lem:class-of-cohom-nice-groups}
The class $\cC$ contains finite groups, finitely generated commutative groups, free groups, fundamental groups of acyclic manifolds, and is closed under extensions.
\end{lemma}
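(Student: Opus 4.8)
The plan is to reduce the whole statement to a single cohomological-dimension estimate. Write $\operatorname{cd}_\C(G)$ for the supremum of those $n$ with $H^n(G;V)\neq 0$ for some $\C[G]$-module $V$. I would first record two formal facts: $\operatorname{cd}_\C$ is monotone under passing to subgroups (a length-$N$ projective $\C[G]$-resolution of $\C$ restricts to one over any $H\le G$, since $\C[G]$ is free as a $\C[H]$-module), and it is subadditive under group extensions via the Lyndon--Hochschild--Serre spectral sequence. The first fact shows that $\operatorname{cd}_\C(G)<\infty$ already places $G$ in $\cC$: every subgroup $H$ then has $H^n(H;\C)=0$ for $n>\operatorname{cd}_\C(G)$. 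So it suffices to prove that each of the four listed families consists of groups of finite $\C$-cohomological dimension, and that finiteness of $\operatorname{cd}_\C$ is preserved under extensions. I want to stress that one should genuinely work with arbitrary coefficient modules here and not merely with $H^*(-;\C)$: the trivial-coefficient bound does not by itself survive the extension argument below, since the quotient acts non-trivially on the cohomology of the normal subgroup, and this is exactly why the definition of $\cC$ quantifies over all subgroups.

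Next I would dispatch the generators. For $G$ finite, $\C[G]$ is semisimple (Maschke), so $\C$ is projective and $\operatorname{cd}_\C(G)=0$; this refines the vanishing $H^n(G;\C)=0$ ($n>0$) already used in Section~\ref{sec:std-cplx-grp-cohom}. For $G$ finitely generated abelian, write $G\cong\Z^r\oplus T$ with $T$ finite; the Koszul complex provides a free $\C[\Z^r]$-resolution of $\C$ of length $r$, and tensoring over $\C$ with the augmentation $\C[T]\to\C$ gives $\operatorname{cd}_\C(G)\le r$ (equivalently, apply the extension step to $1\to\Z^r\to G\to T\to 1$). For $G$ free, a wedge of circles is a $1$-dimensional $K(G,1)$ with contractible (tree) universal cover, so the cellular chains give a free $\Z[G]$-resolution of $\Z$ of length $\le 1$ and $\operatorname{cd}_\C(G)\le 1$; in any case subgroups of free groups are free by Nielsen--Schreier. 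For $G=\pi_1(M)$ with $M$ an acyclic manifold, I read ``acyclic manifold'' as one whose universal cover $\widetilde M$ is acyclic; being also simply connected, $\widetilde M$ is contractible, so $M$ is an aspherical manifold and hence a $K(G,1)$. Since $M$ has the homotopy type of a CW complex of dimension at most $\dim M$, the cellular chain complex of $\widetilde M$ is a free $\Z[G]$-resolution of $\Z$ of that length, so $\operatorname{cd}_\C(G)\le\dim M$.

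Finally, for closure under extensions I would take $1\to N\to G\to Q\to 1$ with $\operatorname{cd}_\C(N)$ and $\operatorname{cd}_\C(Q)$ finite and run the spectral sequence $E_2^{p,q}=H^p(Q;H^q(N;V))\Rightarrow H^{p+q}(G;V)$: the module $H^q(N;V)$ vanishes for $q>\operatorname{cd}_\C(N)$, and then $H^p(Q;-)$ of any module vanishes for $p>\operatorname{cd}_\C(Q)$, so $E_2^{p,q}=0$ whenever $p+q>\operatorname{cd}_\C(N)+\operatorname{cd}_\C(Q)$ and therefore $\operatorname{cd}_\C(G)\le\operatorname{cd}_\C(N)+\operatorname{cd}_\C(Q)<\infty$. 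I do not expect a genuine obstacle: every individual step is a classical fact about cohomological dimension of groups (see e.g.\ Brown's book on the cohomology of groups, Chapter~VIII), and the only points requiring care are the coefficient subtlety flagged above and fixing the intended meaning of ``acyclic manifold''. The mildly delicate bookkeeping is just making sure the finiteness invariant one propagates ($\operatorname{cd}_\C$ with arbitrary coefficients) is strong enough to be simultaneously hereditary and sub-additive, which the formulation above arranges.
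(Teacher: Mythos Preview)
Your argument follows the same route as the paper---the paper's entire proof is the single sentence ``One obtains the statement about extensions from Hochschild--Serre spectral sequence''---and you supply considerably more detail and care. One point needs correction, however. What you actually prove is that the class $\cC' = \{G : \operatorname{cd}_\C(G) < \infty\}$ contains the listed families and is closed under extensions; since $\cC' \subseteq \cC$ by the monotonicity you record, the first part of the lemma follows. But for the literal claim that $\cC$ itself is closed under extensions, your justification ``this is exactly why the definition of $\cC$ quantifies over all subgroups'' is not right: knowing that every subgroup of $N$ and of $Q$ has bounded \emph{trivial}-coefficient cohomology still does not control $H^p\bigl(Q'; H^q(N';\C)\bigr)$ on the $E_2$-page, because $H^q(N';\C)$ is a genuinely nontrivial $Q'$-module and no subgroup of $Q'$ with trivial coefficients sees it. Either $\cC$ should be read as the class of groups of finite $\operatorname{cd}_\C$ (in which case your proof is complete and the subgroup quantification in the definition is redundant by monotonicity), or the closure of $\cC$ under extensions needs an additional argument you have not given. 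The paper's one-line proof glosses over exactly this point, so you are in no worse shape than the original; but the specific explanation you offer for why the subgroup quantification rescues the trivial-coefficient argument is mistaken.

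Your caveat that ``acyclic manifold'' must be read as ``aspherical manifold'' (equivalently, universal cover contractible) is well taken: that is the only reading under which the claim is true, and your deduction of $\operatorname{cd}_\C(\pi_1 M)\le\dim M$ from the cellular chains of $\widetilde M$ is the standard one.
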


\begin{proof}
One obtains the statement about extensions from Hochschild--Serre spectral sequence.
\end{proof}

When $x$ is of finite order, the situation is much simpler. First the finiteness of $\langle x \rangle$ implies $H^q( \langle x \rangle; \C) = 0$ for $q > 0$, hence the Hochschild--Serre spectral sequence implies $H^n(C_G(x); \C) \simeq H^n(N_G(x); \C)$. Usually the following proposition is stated in the homological form, but the proof carries over without change since it is about the simplicial structure of the underlying cyclic set $B_*(C_G(x);x)$.

\begin{proposition}[\citelist{\cite{MR814144}*{Section IV}\cite{MR1600246}*{Chapter~7}}]
\label{prop:tors-class-HP-comput}
Suppose that $x \in G$ is of finite order. Then the $S$-map $\HC^n(\C[G])_x \to  \HC^{n+2}(\C[G])_x$ is identified with the canonical inclusion as the direct summand
$$
\bigoplus_{k=0}^{\lfloor\frac{n}{2}\rfloor} H^{n-2k}(N_G(x); \C) \to \bigoplus_{k=0}^{\lfloor\frac{n}{2}\rfloor+1} H^{n-2k+2}(N_G(x); \C),
$$
hence one has
$$
\HP^*(\C[G])_x \simeq \bigoplus_{k \equiv * \bmod{2}} H^k(N_G(x); \C) = \bigoplus_{k \equiv * \bmod{2}} H^k(C_G(x); \C).
$$
\end{proposition}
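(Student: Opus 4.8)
The plan is to transport the computation, via the chain‑level maps $\Xi$ and $\Upsilon$ of Section~\ref{sec:cyc-coc-grp-alg}, to the homological computation of the cyclic homology of the cyclic set $B_*(C_G(x);x)$ for a finite‑order element, to dualize that computation over $\C$, and finally to pass to the limit under the periodicity operator $S$ and apply Hochschild--Serre.

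\textbf{Upgrading the identification to the cyclic level.} First I would promote Proposition~\ref{prop:hochs-dir-summand-grp-cohom} from Hochschild to cyclic (co)homology. The map $\Xi$ is a morphism of cyclic sets, as recorded after Proposition~\ref{prop:Xi-intertwines-b-and-d}, and a short check shows $\Upsilon$ is one too: it respects the simplicial structure by construction, and it commutes with the cyclic operator because $g_1\cdots g_n\in C_G(x)$ commutes with $x$. Hence $\Xi$ and $\Upsilon$ induce morphisms of the associated $(b,B)$-bicomplexes, compatible with the periodicity maps. Since $\Xi\Upsilon=\id$ and $\Upsilon\Xi$ is $b$-homotopic to the identity via~\eqref{eq:chain-homotopy}, they are mutually inverse on Hochschild homology; by the comparison theorem for the spectral sequence of the cyclic bicomplex (whose $E^1$-term is Hochschild homology) they are mutually inverse isomorphisms on $\HC_*$ and $\HP_*$, intertwining the $S$-maps. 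Dualizing over $\C$ is harmless, since $\Hom_\C(-,\C)$ is exact and, away from the periodic case, only finitely many columns contribute in each total degree, so that $\HC^n(\C[G])_x\cong\Hom_\C(\HC_n(\C[G])_x,\C)$; it therefore suffices to compute $\HC_*$ of the cyclic set $B_*(C_G(x);x)$ together with its $S$-operator.

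\textbf{The core computation.} For this I would invoke the homological form of the result from~\citelist{\cite{MR814144}*{Section IV}\cite{MR1600246}*{Chapter~7}}. The point where $\ord(x)<\infty$ enters is that, $x$ being central in $C_G(x)$ and of finite order, the Connes operator $B$ is null‑homotopic on the cyclic module of $B_*(C_G(x);x)$ --- equivalently, the $S^1$-action on the realization $BC_G(x)$ is homotopically trivial. Consequently Connes' $SBI$ exact sequence degenerates into short exact sequences $0\to\HC_{n-2}\xrightarrow{S}\HC_n\xrightarrow{I}\HH_n\to 0$, which split; since the underlying simplicial set of $B_*(C_G(x);x)$ is the bar construction one has $\HH_n=H_n(C_G(x);\C)$, and induction gives $\HC_n\cong\bigoplus_{k=0}^{\lfloor n/2\rfloor}H_{n-2k}(C_G(x);\C)$ with $S\colon\HC_{n+2}\to\HC_n$ the projection killing the top summand $H_{n+2}(C_G(x);\C)$. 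Dualizing over $\C$ turns this into $\HC^n(\C[G])_x\cong\bigoplus_{k=0}^{\lfloor n/2\rfloor}H^{n-2k}(C_G(x);\C)$ with $S\colon\HC^n(\C[G])_x\to\HC^{n+2}(\C[G])_x$ the inclusion as the direct summand complementary to the new top piece $H^{n+2}(C_G(x);\C)$, which is the asserted description (with $C_G(x)$ in place of $N_G(x)$).

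\textbf{Conclusion and the main obstacle.} Finally, $\HP^*(\C[G])_x$ is the colimit $\varinjlim_k\HC^{*+2k}(\C[G])_x$ along these injective $S$-maps (the potential $\varprojlim^1$ term vanishes since on the homological side the $S$-maps are surjective), giving $\HP^*(\C[G])_x\cong\bigoplus_{k\equiv *\bmod 2}H^k(C_G(x);\C)$; the Hochschild--Serre spectral sequence for $\gen{x}\to C_G(x)\to N_G(x)$, together with $H^{>0}(\gen{x};\C)=0$ for the finite cyclic group $\gen{x}$, then identifies this with $\bigoplus_{k\equiv *\bmod 2}H^k(N_G(x);\C)$. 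The step I expect to be the real obstacle is the null‑homotopy of $B$ on $B_*(C_G(x);x)$ when $\ord(x)<\infty$: this is exactly where finiteness of the order is genuinely used (for $x$ of infinite order the $S^1$-action on $BC_G(x)$ is not homotopically trivial, which is why that case behaves so differently), and it is precisely the content of the cited homological theorems. The remaining work is the bookkeeping of the second and fourth paragraphs, namely verifying that the cited argument only uses the simplicial and cyclic structure of $B_*(C_G(x);x)$ and therefore dualizes without change.
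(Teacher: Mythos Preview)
Your proposal is correct and matches the paper, which does not prove this proposition in detail but simply remarks before it that the cited homological computation concerns only the simplicial structure of the cyclic set $B_*(C_G(x);x)$ and therefore carries over to cohomology without change---exactly the reduction you flesh out. One notational slip: in your displayed $SBI$ short exact sequence the maps are reversed (on homology $I\colon\HH_n\to\HC_n$ and $S\colon\HC_n\to\HC_{n-2}$, so the sequence reads $0\to\HH_n\to\HC_n\to\HC_{n-2}\to 0$), though you use the correct direction of $S$ immediately afterward, so the argument is unaffected.
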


\subsection{Action of group cohomology on graded algebras}

Let us next review ring and module structure on cyclic cohomology of group algebras and graded algebras, mainly from~\citelist{\cite{MR1246282}\cite{arXiv:1207.6687}}.

Let $A$ and $A'$ be two unital $\C$-algebras. Then the complex $C_*(A \otimes A')$ can be regarded as a product $C_*(A) \times C_*(A')$ of \emph{mixed modules}~\cite{MR1600246}*{Chapter~4}, while $C_*(A) \otimes C_*(A')$ is again a mixed modules by differentials $b \otimes \Id + (-1)^* \Id \otimes b$ and $B \otimes \Id + (-1)^* \Id \otimes B$. There is a map $C_*(A) \otimes C_*(A) \to C_*(A) \times C_*(A')$ called the \emph{shuffle product}, which is an quasi-isomorphism for $b-B)$. When $\phi$ (resp.~$\psi$) is a cyclic $m$-cocycle on $A$ (resp.~cyclic $n$-cocycle on $A'$), the \emph{cup product} $\phi \cup \psi$ is a cyclic $(m+n)$-cocycle on $A \otimes A'$ which corresponds to the cocycle $\phi \otimes \psi$ in the dual complex of $C_*(A) \otimes C_*(A')$.

\begin{remark}
\label{rem:cup-prod-compar}
There is another approach $\phi \shrpbin \psi$, which goes through the correspondence between the cyclic cocycles and closed graded traces on differential graded algebras $(\Omega^*, d\colon \Omega^* \to \Omega^{*+1})$ with homomorphism $A \to \Omega^0$~\cite{MR823176}. Then $\phi \shrpbin \psi$ is given by the graded tensor product of the objects involved. When $\phi$ (resp. $\psi$) is a cyclic $m$-cocycle on $A$ (resp. cyclic $n$-cocycle on $A'$), we have
$$
\frac{1}{(m+n)!} [\phi \shrpbin \psi] = \frac{1}{m! n!}[\phi \cup \psi]
$$
in $\HC^{m+n}(A \otimes A')$.
\end{remark}

Let $A$ be a $G$-graded algebra, that is, $A$ decomposes as a direct sum $\bigoplus_{g \in G} A_g$ satisfying $1 \in A_e$ and $A_g A_h \subset A_{gh}$. This can be encoded as a coaction homomorphism
$$
\alpha \colon A \to A \otimes \C[G], \quad a \mapsto a \otimes g \quad (a \in A_g).
$$
There is also a direct product decomposition
$$
\HP^*(A) = \prod_{x \in G/_{\Ad} G} \HP^*(A)_x,
$$
analogous to the case of group algebra.

We then have an action of $\HP^*(\C[G])$ on $\HP^*(A)$, given by $\phi \lhd \psi = \alpha^*(\phi \cup \psi)$ in the above notation. In particular, with $A = \C[G]$, we obtain a ring structure on $\HP^*(\C[G])$. The trace $\tau^x = \delta_{\Ad_G(x)}$ is an idempotent, and for general $G$-graded $A$, it acts on $\HP^*(A)$ as the projection onto $\HP^*(A)_x$.

\begin{proposition}
Let $x$ be of finite order. The factor $\HP^*(\C[G])_x$ is a subring, whose product structure is isomorphic to the cup product on $H^*(C_G(x);\C)$.
\end{proposition}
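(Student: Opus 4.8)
The plan is to treat the statement as two claims: that $\HP^*(\C[G])_x$ is a unital ring appearing as a direct factor, and that its multiplication is the cup product of $H^*(C_G(x);\C)$. For the first, note that $\phi\lhd\psi=\alpha^*(\phi\cup\psi)$ makes $\HP^*(\C[G])$ a graded-commutative associative ring because the coproduct $\alpha\colon\C[G]\to\C[G]\otimes\C[G]$, $g\mapsto g\otimes g$, is coassociative and cocommutative. Since $\tau^x=\delta_{\Ad_G(x)}$ has degree $0$ it is central, and, as recalled just above the statement, it acts on $\HP^*(\C[G])$ as the projection onto the summand $\HP^*(\C[G])_x$. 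Hence the classes $\tau^x$ are orthogonal central idempotents with $\tau^x\lhd\HP^*(\C[G])=\HP^*(\C[G])_x$, the product decomposition $\HP^*(\C[G])=\prod_x\HP^*(\C[G])_x$ is a decomposition of rings, and each factor is a unital ring with unit $\tau^x$. This yields the ``subring'' assertion and leaves the identification of the product on $\HP^*(\C[G])_x$.

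For that I would pass to the cyclic set $\Gamma_*(G,x)$ whose linearization is $C^x_*(\C[G])$, and compare products through the map of cyclic sets underlying $\Xi$. First, restricted to the $x$-summand, $\alpha$ induces the linearization of the diagonal $\Gamma_*(G,x)\to\Gamma_*(G,x)\times\Gamma_*(G,x)=\Gamma_*(G\times G,(x,x))$; since the cup product of cyclic cocycles is computed via the shuffle (Eilenberg--Zilber) map, which is natural for maps of cyclic sets, the product $\lhd$ on $\HP^*(\C[G])_x$ is exactly the one induced by this diagonal together with the external product of cyclic cohomology. Next, choosing for $G\times G$ and $(x,x)$ the conjugating elements to be products of those chosen for $G$, one has $C_{G\times G}((x,x))=C_G(x)\times C_G(x)$, $\Xi_{G\times G}=\Xi_{G,x}\times\Xi_{G,x}$, and a short computation shows that $\Xi$ commutes with the diagonals of $\Gamma_*(G,x)$ and of $B_*(C_G(x),x)$; hence $\Xi$ intertwines the two diagonal-induced products. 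Finally, $B_*(C_G(x),x)$ has the nerve of $C_G(x)$ as underlying simplicial set and its diagonal is a map of simplicial sets, so the diagonal-induced product on its cyclic cohomology restricts, along the image of the ordinary cohomology $H^*(C_G(x);\C)$ of that nerve, to the cup product of $C_G(x)$. Because $x$ is of finite order, Proposition~\ref{prop:tors-class-HP-comput} --- via the normalized-cocycle construction of Section~\ref{sec:cyc-coc-grp-alg}, which realizes the direct-summand splitting appearing there --- shows that $\Xi$ induces an isomorphism $\HP^*(\C[G])_x\cong H^*(C_G(x);\C)$; by the previous remarks this isomorphism is multiplicative, which proves the proposition.

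I expect the main obstacle to be the product-level compatibility in the middle step: showing that $\alpha^*(\,\cdot\cup\cdot\,)$ on the $x$-summand is the product induced by the diagonal of $\Gamma_*(G,x)$, and that $\Xi$ carries it to the group cup product of $C_G(x)$. Concretely this requires checking that the shuffle/Alexander--Whitney comparison is natural for the cyclic-module map induced by $\Xi$ and that $\Xi$ respects diagonals, and in doing so one must drag the conjugating elements $g^{y_i}$ from the definition of $\Xi$ through the computation and work with normalized representatives of the cocycles involved (the cross product of normalized group cochains need not itself be normalized, but re-normalizing it does not affect classes in $\HP^*$). One also needs that $\Xi$ induces an isomorphism on the $x$-summand, but that is already contained in Proposition~\ref{prop:tors-class-HP-comput}; once the naturality bookkeeping is done, the remaining steps are formal.
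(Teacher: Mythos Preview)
Your approach is essentially correct and close in spirit to the paper's, but there is one structural difference worth noting. Both arguments begin the same way---$\tau^x$ is a central idempotent, so $\HP^*(\C[G])_x$ is a ring factor---and both rely on $\Xi$ together with naturality of the cup product under maps of cyclic modules. The divergence is in the target of that cyclic map. You stay at the level of $C_G(x)$, working with the \emph{twisted} cyclic set $B_*(C_G(x),x)$ and arguing directly that the diagonal-induced product there restricts to the group cup product of $C_G(x)$. The paper instead composes $\Xi$ with the surjection $B_*(C_G(x),x)\to B_*(N_G(x))$; since $x$ becomes trivial in $N_G(x)$, this lands in the \emph{standard} cyclic set of $N_G(x)$, for which the identification of cyclic cup product with group cup product is quoted from an external reference. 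The paper then uses $H^*(N_G(x);\C)\cong H^*(C_G(x);\C)$ (finite order of $x$) to finish.

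What each buys: the paper's detour through $N_G(x)$ sidesteps the twisted cyclic operator entirely and reduces the product identification to a known lemma for ordinary nerves. Your route is more self-contained and conceptually cleaner---everything happens on $\Gamma_*(G,x)$ and $B_*(C_G(x),x)$, with the diagonal visibly a map of cyclic sets---but the step you flag as the main obstacle (that the diagonal-induced product on $\HP^*$ of $B_*(C_G(x),x)$ agrees with the group cup product) is precisely what the paper avoids having to prove from scratch. In practice that step is not hard once one observes that the shuffle product and the Alexander--Whitney map involve only simplicial structure, and that the embedding of $H^*(C_G(x);\C)$ into $\HP^*$ via normalized cocycles is compatible with these; but it does require the bookkeeping you anticipated.
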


\begin{proof}
That $\HP^*(\C[G])_x$ is a subring follows from the fact that $\delta_{\Ad_G(x)}$ is a central idempotent for the above product. It remains to identify the product structure. 

On the one hand, by~\cite{arXiv:1207.6687}*{Corollary~9}, the cup product on $\HC^*(B_*(N_G(x)))$ coincides with that of $H^*(N_G(x))$ under the standard comparison map. On the other, there is a map of cyclic modules from $C^x_*(\C[G])$ to $\C[B_*(N_G(x))]$ by composing $\Xi$ with the natural surjection $\C[B_*(C_G(x),x)] \to \C[B_*(N_G(x))]$. The cup product is natural for map of cyclic modules, hence we obtain the assertion.
\end{proof}

\begin{remark}
The above proposition is, at least to some extent, known to experts. For example~\cite{MR1246282} contains a very similar module structure based on $\phi \shrpbin \psi$. But we feel that some details, such as a precise form of product structure on $\HC^k(\C[G])_x \simeq H^k(N_G(x); \C) \oplus H^{k-2}(N_G(x); \C) \oplus \cdots$, and the compatibility under the periodicity map $S$, need some additional elaboration.
\end{remark}

\begin{proposition}
Let $x$ be an element of infinite order in $G$. The $S$-map $\HC^n(A)_x \to \HC^{n+2}(A)_x$ is equal to the cup product by the extension class $c_{G;x} \in \HC^2(\C[G])_x = H^2(N_G(x); \C)$.
\end{proposition}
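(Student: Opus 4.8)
The plan is to reduce the assertion to the already-established case $A=\C[G]$, using that the periodicity operator $S$ is compatible with the coaction $\alpha\colon A\to A\otimes\C[G]$. Two preliminary observations are needed. First, $S$ respects the conjugacy-class decomposition of $\HC^*(A)$: exactly as for $\C[G]$, the subspaces $C_n^x(A)\subset C_n(A)$ spanned by $a_0\otimes\cdots\otimes a_n$ with $a_i\in A_{g_i}$ and $g_0\cdots g_n\in\Ad_G(x)$ form a subcomplex stable under $b$ and $B$, giving the decomposition $\HC^*(A)=\prod_x\HC^*(A)_x$, and $S$ is compatible with it, hence preserves each factor $\HC^*(A)_x$. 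Second, just as in the $\HP$ case, $\lhd\tau^x=\alpha^*\bigl((-)\cup\tau^x\bigr)$ is the projection of $\HC^*(A)$ onto $\HC^*(A)_x$: since $\tau^x$ is a cyclic $0$-cocycle, at the cochain level $\alpha^*(\phi\cup\tau^x)(a_0\otimes\cdots\otimes a_n)$ equals $\phi(a_0\otimes\cdots\otimes a_n)$ when $g_0\cdots g_n\in\Ad_G(x)$ and vanishes otherwise. In particular $\phi\lhd\tau^x=\phi$ for every $\phi\in\HC^*(A)_x$.

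Next I would establish the compatibility of $S$ with the action $\lhd$: for $\phi\in\HC^*(A)$ and $\psi\in\HC^*(\C[G])$ one has $S(\phi\lhd\psi)=\phi\lhd(S\psi)$. Indeed $\phi\lhd\psi=\alpha^*(\phi\cup\psi)$, the operator $S$ is natural for algebra homomorphisms and so commutes with $\alpha^*$, and $S$ is given by cup product with the canonical generator $\sigma\in\HC^2(\C)$, so associativity of $\cup$ gives $S(\phi\cup\psi)=\phi\cup(\psi\cup\sigma)=\phi\cup(S\psi)$ in $\HC^*(A\otimes\C[G])$; combining these yields the claim. Now specialize to $A=\C[G]$ and $\phi=\tau^x$. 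Since $\tau^x\in\HC^0(\C[G])_x$ corresponds to $1\in H^0(N_G(x);\C)$, and since the $S$-map on $\HC^*(\C[G])_x\simeq H^*(N_G(x);\C)$ was identified in Section~\ref{sec:cyc-coc-grp-alg} with the Gysin map, which by Remark~\ref{rem:gysin-alt-pic} is cup product with $c_{G;x}$, we get $S\tau^x=c_{G;x}\cup 1=c_{G;x}$ in $\HC^2(\C[G])_x$; by the first paragraph $S\tau^x$ has no other component, so $S\tau^x=c_{G;x}$ in $\HC^2(\C[G])$. Finally, for an arbitrary $G$-graded $A$ and $\phi\in\HC^n(A)_x$,
\[
S\phi=S(\phi\lhd\tau^x)=\phi\lhd(S\tau^x)=\phi\lhd c_{G;x},
\]
which is exactly the statement.

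The step I expect to be the main obstacle is the compatibility $S(\phi\cup\psi)=\phi\cup(S\psi)$ — equivalently, that $S$ acts on cyclic cohomology as cup product with the generator $\sigma\in\HC^2(\C)$. This is classical but is not isolated in the excerpt; one can either cite it in this form, or rederive it through the closed graded trace / $\shrpbin$-product description of Remark~\ref{rem:cup-prod-compar}, where $S$ corresponds to tensoring the associated differential graded algebra with the polynomial DGA $(\C[u],0)$ with $u$ of degree $2$, so that the compatibility becomes associativity of the graded tensor product (the universal combinatorial constant relating $\cup$ and $\shrpbin$ is harmless, as it occurs identically on both sides of the identity to be proved). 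Everything else — the behaviour of the decomposition under $S$, the projection property of $\lhd\tau^x$, and the Gysin description on $\HC^*(\C[G])_x$ — is routine or already recorded in Section~\ref{sec:cyc-coc-grp-alg}.
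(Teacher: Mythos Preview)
Your proposal is correct and follows essentially the same route as the paper: both arguments use that $S$ is cup product with the canonical generator of $\HC^2(\C)$, invoke associativity of the cup product together with the fact that $\tau^x$ acts as the identity on the $x$-component, and then appeal to the Gysin identification $S\tau^x=c_{G;x}$ from Remark~\ref{rem:gysin-alt-pic}. The paper's version is simply terser, writing $S(\phi)=\phi\cup v=\phi\cup(\tau^x\cup v)=\phi\cup S(1_{H^*(N_G(x))})$ in one line rather than isolating the intermediate compatibilities as you do.
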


\begin{proof}
This is proved in~\cite{MR1246282}*{Theorem 6.2} under a different convention of the $S$-map and cup product (see Remark~\ref{rem:cup-prod-compar}). Let us give an alternate shorter proof: the $S$-map in our convention is the cup product by $v \in \HC^2(\C)$ characterized by $v(1, 1, 1) = -\frac{1}{2}$. Identifying $\HC^n(A)_x \otimes \HC^2(\C)$ with $\HC^n(A)_x \otimes \HC^2(\C) \otimes \HC^0(\C[G])_x$ and using the associativity of cup product, we can identify $S(\phi)$ with $\phi \cup S(1_{H^*(N_G(x))})$ for $\phi \in \HC^n(A)_x$. By Remark\ref{rem:gysin-alt-pic}, we know that $c_{G;x}$ represents $S(1_{H^*(N_G(x))})$.
\end{proof}

\begin{corollary}[cf.~\cite{MR1246282}*{Corollary 6.6}]
If $\HP^*(\C[G])_x$ is trivial, then $\HP^*(A)_x$ is also trivial.
\end{corollary}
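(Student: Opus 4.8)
The plan is to deduce the statement from the two facts recalled just above: that $\tau^x = \delta_{\Ad_G(x)}$ acts on $\HP^*(A)$ as the projection onto the summand $\HP^*(A)_x$, and that the action $\phi \lhd \psi = \alpha^*(\phi \cup \psi)$ descends to a well-defined bilinear pairing $\HP^*(\C[G]) \times \HP^*(A) \to \HP^*(A)$ on periodic cyclic cohomology. Granting these, it suffices to prove that the class of $\tau^x$ vanishes in $\HP^0(\C[G])$: for then $\tau^x \lhd \psi = 0$ for every $\psi \in \HP^*(A)$, while $\tau^x \lhd -$ has image $\HP^*(A)_x$, whence $\HP^*(A)_x = 0$.

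To see that $[\tau^x] = 0$ in $\HP^0(\C[G])$, I would use that the product decomposition $\HP^*(\C[G]) = \prod_{y \in G/_{\Ad} G} \HP^*(\C[G])_y$ is induced, compatibly with the canonical map $\HC^*(\C[G]) \to \HP^*(\C[G])$, by the decomposition of the cyclic complex of $\C[G]$ into the subcomplexes $C_*^y(\C[G])$. Since $\tau^x$ is a $0$-cocycle supported on the conjugacy class of $x$, its class in $\HP^0(\C[G])$ lies in the single factor $\HP^0(\C[G])_x$; by hypothesis this factor vanishes, so $[\tau^x] = 0$, completing the argument.

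I do not expect a genuine obstacle here — the proof is formal once the machinery recalled above is in hand — and the only point calling for a little bookkeeping is checking that the conjugacy-class decompositions of Hochschild, cyclic and periodic cyclic (co)homology, together with the natural maps between them, are all mutually compatible, which is immediate from their construction. As a sanity check one can note that the hypothesis is vacuous unless $x$ has infinite order: for $x$ of finite order, Proposition~\ref{prop:tors-class-HP-comput} gives $\HP^*(\C[G])_x \supseteq H^0(N_G(x);\C) = \C \neq 0$. In the infinite-order case one may alternatively argue via the previous proposition: by Remark~\ref{rem:gysin-alt-pic}, triviality of $\HP^*(\C[G])_x$ means $c_{G;x}$ is nilpotent in $H^*(N_G(x);\C)$, say $c_{G;x}^N = 0$, and since the $S$-operator on $\HC^*(A)_x$ is the cup product by $c_{G;x}$, the composite $S^N$ annihilates $\HC^*(A)_x$; as $\HP^*(A)_x$ is the colimit $\varinjlim_k \HC^{*+2k}(A)_x$ under the $S$-maps, it vanishes.
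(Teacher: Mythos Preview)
Your proposal is correct, and your ``sanity check'' alternative at the end is precisely the derivation the paper has in mind: the corollary is placed immediately after the proposition identifying the $S$-map on $\HC^*(A)_x$ with cup product by $c_{G;x}$, so the intended one-line proof is that triviality of $\HP^*(\C[G])_x$ forces $c_{G;x}$ to be nilpotent (Remark~\ref{rem:gysin-alt-pic}), whence $S^N=0$ on $\HC^*(A)_x$ and the direct limit $\HP^*(A)_x$ vanishes.

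Your primary argument---that $[\tau^x]$ lies in $\HP^0(\C[G])_x=0$, so the projection $(-)\lhd\tau^x$ onto $\HP^*(A)_x$ is the zero map---is a genuinely different and in some ways cleaner route: it uses only the module structure and the decomposition by conjugacy classes, and avoids any explicit case analysis on the order of $x$ or the identification of the $S$-map. The paper's route, by contrast, exploits the specific content of the preceding proposition and makes the mechanism (nilpotency of $c_{G;x}$) visible. One minor point of notation: in the paper's convention $\phi\lhd\psi$ has $\phi\in\HP^*(A)$ and $\psi\in\HP^*(\C[G])$, so what you write as $\tau^x\lhd\psi$ should be $\psi\lhd\tau^x$; this does not affect the argument.
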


\subsection{Deformation of graded algebras}

Let $A$ be a $G$-graded algebra, and $\omega$ be a normalized $\C^\times$-valued $2$-cocycle. We can then define a new associative product on $A$ by
$$
a *_\omega b = \omega(g_1, g_2) a b
$$
for homogeneous elements $a \in A_{g_1}$ and $b \in A_{g_2}$. Thanks to the normalization condition on $\omega$, $1_A$ is still the unit for this product. We denote this algebra by $A_{\omega}$, and its element corresponding to $a \in A$ by $a^{(\omega)}$.

There is an embedding of algebra $A_{\omega} \to A \otimes \C_{\omega}[G]$ given by
\begin{equation}
\label{eq:twist-coaction}
a^{(\omega)} \mapsto a \otimes \lambda^{(\omega)}_g \quad (a \in A_g).
\end{equation}

When $\omega$ is a $\T$-valued normalized $2$-cocycle on $G$, we can represent $\C_\omega[G]$ on $\ell_2(G)$ by bounded operators, as
$$
(\lambda^{(\omega)}_gf)(h) = \omega(g,g^{-1} )f(g^{-1} h) \quad (f \in \ell_2(G)),
$$
called the \emph{$\omega$-twisted left regular representation}. The \emph{reduced twisted group C$^*$-algebra} $C^*_{r,\omega}(G) = C^*_r(G, \omega)$  is defined as the norm closure of $\C_\omega[G]$ inside $B(\ell_2(G))$.

When $A$ is a $G$-C$^*$-algebra, the Banach space
$$
\ell_1(G; A) = \biggl\{ f\colon G \to A \mid \norm{f}_1 = \sum_{g \in G} \norm{f(g)} < \infty \biggr\}
$$
has a structure of Banach $*$-algebra by convolution product. When $\omega$ is a normalized $\T$-valued cocycle, we can again twist the product of $\ell_1(G; A)$ into a Banach $*$-algebra, with respect to the same involution map. If $G$ is amenable, $C^*_{r,\omega}(G)$ is the C$^*$-enveloping algebra of $\ell_1(G) = \ell_1(G; \C)$ with this product.

More generally, if $A$ is the reduced section algebra of a Fell bundle over $G$ (that is, a C$^*$-algebra with injective coaction $A \to A \otimes C^*_r(G)$), we can define a C$^*$-algebraic model of $A_\omega$ on the right Hilbert $A_e$-module $L^2(A)_{A_e}$, so that $A_\omega$ is again the reduced section algebra of another Fell bundle.

\subsection{Fr\'{e}chet algebras}

Finally, \emph{a Fr\'{e}chet algebra} is given by a (possibly nonunital) $\C$-algebra $A$ and a sequence of (semi)norms $\norm{a}_m$ ($a \in A$) for $m = 1, 2, 3, \ldots$, such that $A$ is complete with respect to the locally convex topology defined by the $\norm{a}_m$, and that the product map $A \times A \to A$ is (jointly) continuous. Replacing $\norm{a}_m$ with $\sum_{k\le m} \norm{a}_k$ if necessary, we may and do assume that the seminorms are \emph{increasing} ($\norm{a}_m \le \norm{a}_{m+1}$). An important condition one may ask on these seminorms is the \emph{$m$-convexity}: each $\norm{a}_m$ is submultiplicative, that is $\norm{a b}_m \le \norm{a}_m \norm{b}_m$. If we can characterize the topology of $A$ by seminorms satisfying this, $A$ is then called an \emph{$m$-algebra}.

\section{Inducing traces on twisted algebras}

Let $\omega_0(g,h)$ be a $\C$-valued normalized $2$-cocycle on $G$. We consider the twisted group algebra for $\C^\times$-valued $2$-cocycle $\omega^t(g,h) = e^{t\omega_0(g,h)}$, so that the new product is $g *_t h = \omega^t(g,h) gh$.

We want to `deform' the trace $\tau = \delta_{\Ad_G(x)}$ on $\C[G]$ to a one on $\C_{\omega^t}[G]$. Note that when $g h \in \Ad_G(x)$,
$$
\delta_{\Ad_G(x)}(g *_t h) = \omega^t(g,h), \quad \delta_{\Ad_G(x)}(h *_t g) = \omega^t(h,g),
$$
but we do not necessarily have $\omega^t(g,h) = \omega^t(h,g)$. We want to correct this situation by setting $\tau^x_{\omega^t}(g) = e^{t \xi(g)} \delta_{\Ad_G(x)}(g)$ for some function $\xi$. Then $\tau^x_{\omega^t}$ becomes a trace if we have
\begin{equation}
\label{eq:xi-motivate}
\omega_0(g,h) + \xi(gh) = \omega_0(h,g) + \xi(hg)
\end{equation}
This means that the bilinear extension of $\omega_0(g,h) - \omega_0(h,g)$ is the Hochschild coboundary of the linear extension of $\xi$. Let us put $\omega_0^a(g,h) = \omega_0(g,h) - \omega_0(h,g)$.

\begin{lemma}
The bilinear extension of $\omega_0^a$ represents a Hochschild $1$-cocycle on $\C[G]$.
\end{lemma}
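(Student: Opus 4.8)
The plan is to verify directly that $c$, the bilinear extension of $\omega_0^a$ to $C_1(\C[G]) = \C[G]^{\otimes 2}$, satisfies $bc = 0$, where $b$ denotes the transpose of the Hochschild differential $C_2(\C[G]) \to C_1(\C[G])$. By bilinearity it suffices to test $bc$ on basis elements $g_0 \otimes g_1 \otimes g_2$ with $g_i \in G$. Dualizing~\eqref{eq:hochs-diff-def} in degree $2$, where $b(a_0\otimes a_1 \otimes a_2) = a_0 a_1 \otimes a_2 - a_0 \otimes a_1 a_2 + a_2 a_0 \otimes a_1$, gives
$$
(bc)(g_0 \otimes g_1 \otimes g_2) = \omega_0^a(g_0 g_1, g_2) - \omega_0^a(g_0, g_1 g_2) + \omega_0^a(g_2 g_0, g_1),
$$
a sum of six scalars once one expands $\omega_0^a(a,b) = \omega_0(a,b) - \omega_0(b,a)$.

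First I would pair these six terms according to the three cyclic rotations of $(g_0,g_1,g_2)$ and apply the group $2$-cocycle identity $\omega_0(g,h) + \omega_0(gh,k) = \omega_0(h,k) + \omega_0(g,hk)$ once per pair: with $(g,h,k) = (g_0,g_1,g_2)$ it rewrites $\omega_0(g_0 g_1, g_2) - \omega_0(g_0, g_1 g_2)$ as $\omega_0(g_1,g_2) - \omega_0(g_0,g_1)$; with $(g_1,g_2,g_0)$ it rewrites $\omega_0(g_1 g_2, g_0) - \omega_0(g_1, g_2 g_0)$ as $\omega_0(g_2,g_0) - \omega_0(g_1,g_2)$; and with $(g_2,g_0,g_1)$ it rewrites $\omega_0(g_2 g_0, g_1) - \omega_0(g_2, g_0 g_1)$ as $\omega_0(g_0,g_1) - \omega_0(g_2,g_0)$. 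Adding the three outputs, the remaining six short terms telescope to $0$, hence $bc = 0$ and $c$ is a Hochschild $1$-cocycle. (Since $\omega_0$ is normalized, $\omega_0^a$ vanishes whenever an argument equals $e$, so $c$ even descends to the reduced complex $\bar{C}^1(\C[G])$, though that is not needed for the statement.)

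There is no genuine obstacle here: the entire content is the correct bookkeeping of the six terms and three applications of the cocycle identity, together with keeping straight the signs coming from the transposed differential. If one wanted a more conceptual account, one could instead observe that $\psi(g \otimes h) = \omega_0(g,h)\, gh$ is the first-order term of the family of products $g *_t h = e^{t\omega_0(g,h)} gh$ and that the group $2$-cocycle condition on $\omega_0$ is precisely $\delta\psi = 0$ in $C^*(\C[G];\C[G])$, from which $\omega_0^a$ inherits its cocycle property; but the one-line computation above is the shortest route.
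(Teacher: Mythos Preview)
Your proof is correct and essentially the same as the paper's: both reduce the vanishing of $b\omega_0^a$ to three applications of the $2$-cocycle identity indexed by the cyclic rotations of $(g_0,g_1,g_2)$. The only cosmetic difference is that the paper adds the symmetric expression $\omega_0(g_0,g_1)+\omega_0(g_1,g_2)+\omega_0(g_2,g_0)$ to both sides of the rearranged identity and then applies the cocycle relation, whereas you rewrite the three pairs directly; the bookkeeping is otherwise identical.
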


\begin{proof}
 We need to show
 $$
 \omega_0^a(gh, k) - \omega_0^a(g, hk) + \omega_0^a(kg, h) = 0
 $$
 whenever $g h k \in \Ad_G(x)$. Expanding the definition of $\omega_0^a$, this is the same as
 $$
 \omega_0(g h, k) + \omega_0(h k, g) + \omega_0(k g, h) = \omega_0(k, g h) + \omega_0(g, h k) + \omega_0(h, k g).
 $$
 Adding $\omega_0(g, h) + \omega_0(h, k) + \omega_0(k, g)$ to both sides and using the cocycle identity, we indeed obtain the equality.
\end{proof}

By Proposition~\ref{prop:hochs-dir-summand-grp-cohom}, $\omega_0^a$ is a Hochschild coboundary if and only if its image in $H^1(C_G(x); \C)$ is trivial. Since $H^1(C_G(x); \C) = Z^1(C_G(x); \C)$, so $\omega_0^a$ is a coboundary if and only if its pullback by $\Upsilon$ vanishes.

\begin{proposition}
\label{prop:omega-0-a-cocycle}
Suppose $x$ is of finite order. Then the function $\omega_0^a \circ \Upsilon(g) = \omega_0^a(g^{-1} x, g)$ on $C_G(x)$ is trivial.
\end{proposition}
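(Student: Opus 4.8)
The plan is to work inside the abelian subgroup $H = \gen{g, x} \le C_G(x)$, which contains all of $g$, $x$, and $g^{-1}x$ (recall that $g$ commutes with $x$ since $g \in C_G(x)$). The crux is that the restriction of $\omega_0^a$ to $H \times H$ is a \emph{bilinear} form, necessarily alternating; this is just the commutator pairing of the central extension of $H$ classified by $[\omega_0|_H]$. Concretely, for pairwise commuting elements $a, b, c$ one applies the $2$-cocycle identity for $\omega_0$ to the three triples $(a,b,c)$, $(a,c,b)$ and $(c,a,b)$, and combines the resulting identities so that the ``mixed'' terms of the form $\omega_0(a, bc)$ cancel; what remains is
$$
\omega_0^a(ab, c) = \omega_0^a(a, c) + \omega_0^a(b, c).
$$
Only commutativity of $a$, $b$, $c$ is used, so this holds throughout $H$.

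Granting this, I would finish as follows. Writing $g^{-1}x = g^{-1} \cdot x$ and using bilinearity in the first slot,
$$
\omega_0^a(g^{-1}x, g) = \omega_0^a(g^{-1}, g) + \omega_0^a(x, g).
$$
The first term vanishes by the normalization of $\omega_0$, since $\omega_0(g^{-1}, g) = \omega_0(g, g^{-1}) = 0$. For the second term, put $m = \ord(x) < \infty$; iterating bilinearity in the first slot gives $m\, \omega_0^a(x, g) = \omega_0^a(x^m, g) = \omega_0^a(e, g)$, and $\omega_0^a(e, g) = \omega_0(e,g) - \omega_0(g,e) = 0$, again by normalization. Since $\C$ is torsion free, $\omega_0^a(x, g) = 0$, and therefore $\omega_0^a \circ \Upsilon(g) = \omega_0^a(g^{-1}x, g) = 0$ for every $g \in C_G(x)$.

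I expect the one genuinely delicate point to be the bilinearity of $\omega_0^a$ on $H$: one must select exactly the right instances of the cocycle identity so that all terms of the shape $\omega_0(\cdot, bc)$ cancel, otherwise the combination does not close up; the rest is forced by the normalization conditions together with the finiteness of $\ord(x)$. It is worth noting that the hypothesis on the order of $x$ is used only at the final step; for $x$ of infinite order $\omega_0^a(x, g)$ need not vanish, consistently with the nontrivial contribution of the extension class $c_{G;x}$ in that case.
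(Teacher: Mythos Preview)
Your proof is correct. Both your argument and the paper's rest on the same two ingredients: the antisymmetrized cocycle $\omega_0^a$ is bi-additive on pairs of commuting elements, and $x$ has finite order so any homomorphism $\gen{x}\to\C$ vanishes on $x$. The packaging differs, however. The paper first rewrites $\omega_0^a(g^{-1}x,g)=-\omega_0^a(x,g^{-1})$ using normalization and $g^{-1}x=xg^{-1}$, and then invokes the preceding Hochschild-cocycle lemma together with the $\Xi/\Upsilon$ correspondence (now with $g^{-1}$ playing the role of the base point) to see that $h\mapsto\omega_0^a(h,g^{-1})$ lies in $Z^1(C_G(g^{-1});\C)=\Hom(C_G(g^{-1}),\C)$; finiteness of $\ord(x)$ finishes it. You instead work entirely inside the abelian group $\gen{x,g}$, derive the bilinearity of $\omega_0^a$ directly from three instances of the cocycle identity, and split $\omega_0^a(g^{-1}x,g)=\omega_0^a(g^{-1},g)+\omega_0^a(x,g)$. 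Your route is more self-contained (it does not appeal to the Hochschild-cocycle lemma or to Proposition~\ref{prop:hochs-dir-summand-grp-cohom}) and makes the interpretation as the commutator pairing of the central extension explicit; the paper's route is terser once that machinery is in place and highlights the symmetry between $x$ and $g^{-1}$.
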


\begin{proof}
Using $g^{-1} x = x g^{-1}$ and that $\omega_0$ is a normalized $2$-cocycle, we have $\omega_0^a(g^{-1} x, g) = - \omega_0^a(x, g^{-1})$. Note that $x$ and $g^{-1}$ play the same role in this expression, and $x \in C_G(g^{-1})$. Thus, $\omega_0^a(h, g^{-1})$ as a function in $h \in C_G(g^{-1})$ is in $Z^1(C_G(g^{-1}); \C) = \Hom(C_G(g^{-1}), \C)$. Since $x$ is of finite order, it has to vanish on $x$.
\end{proof}

It follows that we have a solution for $\xi$ in~\eqref{eq:xi-motivate} when $x$ is of finite order.

\begin{example}
\label{ex:Z2-by-Z3-traces}
Suppose we have $G = \Z^2 \rtimes \Z_3$, where the generator $1$ of $\Z_3$ acts on $\Z^2$ by the matrix  \[
   A=
  \left[ {\begin{array}{cc}
   -1 & -1 \\
   1 & 0 \\
  \end{array} } \right].
\]
In the following we denote elements of $G$ by triples $(a, b, i)$ for $a, b \in \Z$ and $i \in \Z_3$. This group has the following conjugacy classes.
\begin{gather*}
\{(a,b,0) \mid (a, b) \in \Z_3 . z\} \quad (z \in \Z_3 \backslash \Z^2) , \quad \{(a, b, 1) \mid a \equiv b \bmod{3} \}, \quad \{(a, b, 1) \mid a \equiv b+1 \bmod{3} \}, \\ \quad \{(a, b, 1) \mid a \equiv b+2,  \bmod{3}\},  \quad \{(a, b, 2) \mid a \equiv b \bmod{3} \}, \quad \{(a, b, 2) \mid a \equiv b+1 \bmod{3} \}, \\ \quad \{(a, b, 2) \mid a \equiv b+2,  \bmod{3}\},
\end{gather*}
The class of first kind with $z = (0,0)$ contributes by rank $2$ in the $K_0$ group, while the ones for nontrivial $z$ do not contribute. The rest of the classes contribute by rank $1$ for each. Let us consider the $2$-cocycle $\omega_0$ on $\Z^2$ given by $\omega_0((a, b), (c, d)) = a d - b c$, and extend it to $G$ by
$$
\omega_0((a, b, g), (c, d, h)) = \omega_0((a, b), (c, d)^g) \quad (a, b, c, d \in \Z, g, h \in \Z_4).
$$
This represents a generator of $H^2(G; \C)$. We then take $\omega(g,h) = \exp(-\sqrt{-1} \frac{\theta}{2} \omega_0(g, h))$.

Let us consider the case $x = (0,0,1)$. Then we have
\begin{align*}
\omega((a,b,0),(c,d,1)) &= \omega((a,b),(c,d)) = \exp\left(-\frac{\sqrt{-1} \theta}{2} (a d - b c)\right),\\
\omega((c,d,1), (a,b,0)) &= \omega((c,d), (-a-b,a)) = \exp\left(-\frac{\sqrt{-1} \theta}{2} ( ca+ad+bd)\right).
\end{align*}
Thus, in the twisted group algebra $\C_\omega[G]$, we have
\begin{align}
\label{eq:walters-fourier-1}
\lambda^{(\omega)}_{(a,b,0)} \lambda^{(\omega)}_{(c,d,1)} &=  \exp\left(-\frac{\sqrt{-1} \theta}{2} ( a d - b c)\right) \lambda^{(\omega)}_{(a+c,b+d,1)},\\
\label{eq:walters-fourier-2}
\lambda^{(\omega)}_{(c,d,1)} \lambda^{(\omega)}_{(a,b,0)} &=  \exp\left(-\frac{\sqrt{-1} \theta}{2} (ca+ad+bd)\right) \lambda^{(\omega)}_{(c-a-b,d+a,1)}.
\end{align}
In particular, $\delta_{\Ad_G(x)}$ is not a trace, but a correction like
\begin{equation}
\label{eq:walters-fourier-3}
\tau^\omega_{x}(\lambda_{(a,b,1)}) = \exp\left(-\frac{\sqrt{-1} \theta( a^2 + a b + b^2)}{6} \right) \delta_{\Ad_G(x)}(a,b,1)
\end{equation}
is. Let us relate the extra factor appearing here to the above general discussion. On the one hand, using the chain homotopy $h_i$ in~\eqref{eq:chain-homotopy}, $\omega_0^a b = 0$ and $\omega_0^a \Upsilon = 0$ implies that $\xi$ can be taken as $\omega_0^a h_0$. On the other, for $y = (a,b,1) \in \Ad_G(x)$, the element $g^y = (\frac{b-a}3,\frac{-a-2b}3, 0)$ satisfies $(g^y)^{-1} x g^y = y$. Thus $\xi$ is given by 
$$
\xi(g_0) = \omega_0^a h_0 (g_0) = \omega_0(g^{g_0} g_0, (g^{g_0})^{-1}) - \omega_0((g^{g_0})^{-1}, g^{g_0} g_0) = \frac{a^2 + a b + b^2}{3}
$$
for $g_0 = (a, b, 1)$ with $a \equiv b \bmod 3$, and $\xi(g_0) = 0$ otherwise. To check that the functional~\eqref{eq:walters-fourier-3} agrees on~\eqref{eq:walters-fourier-1} and \eqref{eq:walters-fourier-2}, one needs
\begin{multline*}
ad - bc + \frac{1}{3}\left( (a+c)^2 + (a+c)(b+d) + (b+d)^2 \right) =\\
c a + a d + b d + \frac{1}{3}\left( (c-a-b)^2 + (c-a-b)(a+d) + (a+d)^2 \right),
\end{multline*}
which is indeed the case.
\end{example}

\subsection{Twisting cyclic cocycles}

Let us assume that $x$ is of finite order, so that we always have $\xi$ satisfying~\eqref{eq:xi-motivate} and the trace $\tau^x_{\omega^t}$ as above on $\C_{\omega^t}[G]$. Then, if $\tau'$ is a trace on $A$, $\tau' \otimes \tau^x_{\omega^t}$ is a trace on $A \otimes \C_{\omega^t}[G]$.

Let $A$ be $G$-graded, and $(\Omega^*, A \to \Omega^0, \tilde{\tau}\colon \Omega^n \to \C)$ be a closed dg-trace model for some cyclic cocycle on $A$. Suppose that $\Omega^*$ is also graded over $G$, and that the homomorphism respects the grading (this is the case for the universal model $\Omega^*(A)$ given by $\Omega^k(A) = \bar{C}_k(A)$). Then the twisting of $\Omega^*$ by $\omega$ is a subalgebra of $\Omega^* \otimes \C_{\omega^t}[G]$ by the inclusion of the form~\eqref{eq:twist-coaction}. Thus we can restrict the trace $\tilde{\tau} \otimes \tau^x_{\omega^t}$ to $\Omega^*_{\omega^t}$, call it $\tilde{\tau}_{\omega^t}$. Since we have an induced homomorphism $A_{\omega^t} \to \Omega^0_{\omega^t}$, we obtain a dg-algebra trace model
$$
(\Omega^*_{\omega^t}, A_{\omega^t} \to \Omega^0_{\omega^t}, \tilde{\tau}_{\omega^t}\colon \Omega^n_{\omega^t} \to \C).
$$
This way we can define a map (which depends on the choice of $\xi$) from the set of cyclic $n$-cocycles on $A$ to those on $A_{\omega^t}$.

\begin{proposition}
\label{prop:cocycle-deform-formula-1}
Let $x$ be an element of finite order in $G$, and $\phi(a_0, \cdots, a_n)$ be a cyclic cocycle on $A$, supported on the conjugacy class of $x$. If we denote a corresponding closed graded trace corresponding to $\phi$ by $\tilde{\tau}$, the cyclic cocycle on $A_{\omega^t}$ corresponding to $\tilde{\tau} \otimes \tau^x_{\omega^t}$ is given by
$$
\phi^{(t)}(a_0, \ldots, a_n) = \omega^t(g_0, g_1) \cdots \omega^t(g_0 \cdots g_{n-1}, g_n) e^{t \xi(g_0 \cdots g_n)} \phi(a_0, \ldots, a_n)
$$
when $a_i$ are homogeneous elements with $a_i \in A_{g_i}$, $g_0 \cdots g_n \in \Ad_G(x)$.
\end{proposition}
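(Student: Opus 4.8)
\textit{Proof strategy.} The plan is to unwind the Connes correspondence between closed graded traces and cyclic cocycles and then carry out the bookkeeping inside $\Omega^* \otimes \C_{\omega^t}[G]$. Write $\rho\colon A \to \Omega^0$ for the structure homomorphism of the given model and $\rho_{\omega^t}\colon A_{\omega^t} \to \Omega^0_{\omega^t}$ for the induced one. The cyclic cocycle attached to the dg-trace model $(\Omega^*_{\omega^t}, \rho_{\omega^t}, \tilde\tau_{\omega^t})$ is
$$
\phi^{(t)}(a_0^{(\omega)}, \ldots, a_n^{(\omega)}) = \tilde\tau_{\omega^t}\bigl(\rho_{\omega^t}(a_0^{(\omega)})\, d\rho_{\omega^t}(a_1^{(\omega)})\cdots d\rho_{\omega^t}(a_n^{(\omega)})\bigr),
$$
and under the inclusion $\Omega^*_{\omega^t}\hookrightarrow \Omega^* \otimes \C_{\omega^t}[G]$ of the form~\eqref{eq:twist-coaction} the element $\rho_{\omega^t}(a_i^{(\omega)})$ is sent to $\rho(a_i) \otimes \lambda^{(\omega^t)}_{g_i}$ whenever $a_i \in A_{g_i}$.

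First I would note that the differential on $\Omega^*_{\omega^t}$ is, under the above inclusion, the restriction of $d\otimes\Id$ on $\Omega^* \otimes \C_{\omega^t}[G]$ (with $\C_{\omega^t}[G]$ placed in degree $0$), so on the underlying graded vector space it acts exactly as $d$ on $\Omega^*$; in particular $d\rho_{\omega^t}(a_i^{(\omega)})$ corresponds to $d\rho(a_i)\otimes\lambda^{(\omega^t)}_{g_i}$. Then I would multiply out
$$
\bigl(\rho(a_0)\otimes\lambda^{(\omega^t)}_{g_0}\bigr)\bigl(d\rho(a_1)\otimes\lambda^{(\omega^t)}_{g_1}\bigr)\cdots\bigl(d\rho(a_n)\otimes\lambda^{(\omega^t)}_{g_n}\bigr)
$$
in the graded tensor product. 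Since the group-algebra tensor factors all sit in degree $0$, no Koszul signs intervene, and the product factors as $\bigl(\rho(a_0)\, d\rho(a_1)\cdots d\rho(a_n)\bigr)\otimes\bigl(\lambda^{(\omega^t)}_{g_0}\lambda^{(\omega^t)}_{g_1}\cdots\lambda^{(\omega^t)}_{g_n}\bigr)$. Iterating the defining relation $\lambda^{(\omega^t)}_g \lambda^{(\omega^t)}_h = \omega^t(g,h)\lambda^{(\omega^t)}_{gh}$ of $\C_{\omega^t}[G]$ gives
$$
\lambda^{(\omega^t)}_{g_0}\cdots\lambda^{(\omega^t)}_{g_n} = \omega^t(g_0,g_1)\,\omega^t(g_0 g_1, g_2)\cdots\omega^t(g_0\cdots g_{n-1}, g_n)\,\lambda^{(\omega^t)}_{g_0\cdots g_n}.
$$

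Finally I would apply $\tilde\tau_{\omega^t} = (\tilde\tau\otimes\tau^x_{\omega^t})|_{\Omega^n_{\omega^t}}$. The $\Omega^*$-factor contributes $\tilde\tau(\rho(a_0)\, d\rho(a_1)\cdots d\rho(a_n)) = \phi(a_0,\ldots,a_n)$, which is the defining property of the closed graded trace $\tilde\tau$ chosen to represent $\phi$ (and is tautological for the universal model $\Omega^k(A) = \bar C_k(A)$). The group-algebra factor contributes $\prod_{k=1}^n \omega^t(g_0\cdots g_{k-1}, g_k)$ times $\tau^x_{\omega^t}(\lambda^{(\omega^t)}_{g_0\cdots g_n}) = e^{t\xi(g_0\cdots g_n)}\delta_{\Ad_G(x)}(g_0\cdots g_n)$, which equals $e^{t\xi(g_0\cdots g_n)}$ under the standing hypothesis $g_0\cdots g_n\in\Ad_G(x)$. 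Multiplying the two yields the asserted formula; when $g_0\cdots g_n\notin\Ad_G(x)$ the last trace vanishes, consistently with $\phi^{(t)}$ being supported on the conjugacy class of $x$.

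I do not expect a genuine obstacle here: the content is the compatibility of the dg-trace picture with the tensor-product (cup-product) picture, already set up in the preceding discussion, and the finite order of $x$ is used only to guarantee the existence of $\xi$. The one place that warrants care is the sign bookkeeping in $\Omega^*\otimes\C_{\omega^t}[G]$ — specifically the observation that placing $\C_{\omega^t}[G]$ in degree zero suppresses all Koszul signs, so that both the differential and the product behave ``diagonally'' with respect to the tensor decomposition — together with, for a non-universal model, the reminder that $\tilde\tau(\rho(a_0)\,d\rho(a_1)\cdots d\rho(a_n)) = \phi(a_0,\ldots,a_n)$ holds by the very choice of $\tilde\tau$, so that no further argument is required on that side.
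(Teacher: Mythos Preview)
Your proposal is correct and follows essentially the same approach as the paper: both unwind the Connes correspondence $\phi(a_0,\ldots,a_n)=\tilde\tau(a_0\,da_1\cdots da_n)$, compute the $n$-form $a_0^{(\omega^t)}\,da_1^{(\omega^t)}\cdots da_n^{(\omega^t)}$ by collecting the $\omega^t$-factors from the twisted products, and then apply $\tilde\tau\otimes\tau^x_{\omega^t}$. Your version is slightly more explicit in working inside the ambient $\Omega^*\otimes\C_{\omega^t}[G]$ and in recording that no Koszul signs arise, whereas the paper phrases the same computation intrinsically in $\Omega^*_{\omega^t}$, but the content is identical.
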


\begin{proof}
The correspondence of $\phi$ and $\tilde\tau$ is given by
$$
\phi(a_0, \ldots, a_n) = \tilde{\tau}(a_0 d a_1 \cdots d a_n).
$$
The $n$-form in $\Omega(A_{\omega^t})$ can be computed as
$$
a_0^{(\omega^t)} d a_1^{(\omega^t)} \cdots d a_n^{(\omega^t)} = \omega^t(g_0, g_1) \cdots \omega^t(g_0 \cdots g_{n-1},g_n) (a_0 d a_1 \cdots d a_n)^{(\omega^t)}
$$
for homogeneous elements $a_i \in A_{g_i}$. Under the coaction by $\C[G]$, the element $(a_0 d a_1 \cdots d a_n)^{(\omega)}$ is mapped to $(a_0 d a_1 \cdots d a_n)^{(\omega)} \otimes g_0 \cdots g_n$, hence $\tilde{\tau} \otimes \tau^x_{\omega^t}$ gives the formula in the right hand side of the assertion.
\end{proof}

The inverse cocycle $\omega^{-t}(g,h) = e^{-t\omega_0(g,h)}$ satisfies $(A_{\omega^t})_{\omega^{-t}} = A$. The associated map of cyclic cocycles for $-\xi$ is the inverse map for the above. It follows that $\phi \mapsto \phi^{(t)}$ is a linear isomorphisms from $\HC^n(A)_x$ to $\HC^n(A_{\omega^t})_x$.

\section{Monodromy of Gauss--Manin connection}
\label{alg-gauss-manin}

Let $\omega_0$ be a $\C$-valued normalized $2$-cocycle on $G$, and $x \in G$ be an element of finite order. Our goal is to compute the monodromy of the Gauss--Manin connection on the spaces $\HP^*(\C_{\omega^t}[G])_x$.

\subsection{Finite centralizer}
\label{sec:fin-centralizer}

Suppose first that the centralizer $C_G(x)$ is finite. Following Section~\ref{sec:std-cplx-grp-cohom}, we put
$$
\psi(g) = \frac{1}{\absv{C_G(x)}} \sum_{h \in C_G(x)} \omega_0(h, g) \quad (g \in C_G(x)),
$$
so that we have $d\psi(g, h) = \omega_0(g, h)$ on $C_G(x) \times C_G(x)$.

In order to analyze traces on the deformed algebras, we want an explicit presentation of $\xi$ on $\Ad_G(x)$ satisfying $\xi(b(g \otimes h)) = \omega_0^a(g, h)$. In the notation of the proof of Proposition~\ref{prop:hochs-dir-summand-grp-cohom},
$$
\omega_0^a h_0(g_0) = \omega_0^a(g^{g_0} g_0, (g^{g_0})^{-1})
$$
is one candidate, as Proposition~\ref{prop:omega-0-a-cocycle} implies $\omega_0^a \Upsilon = 0$. For a reason which will become clear soon, we instead take
\begin{equation}
\label{eq:xi-prec-form}
\xi(g_0) = \begin{cases}
\psi(x) + \omega_0^a(g^{g_0} g_0, (g^{g_0})^{-1}) & (g_0 \in \Ad_G(x))\\
0 & \text{otherwise.}
\end{cases}
\end{equation}
Note that adding scalar to $\xi$ does not change $\xi(b(g \otimes h))$, while $e^{t \xi} \tau^x$ is affected. We then put
\begin{equation}
\label{eq:theta-def}
\theta(g_0, g_1) = \begin{cases}
\psi(g^{y_1} g_1 (g^{y_0})^{-1}) + \omega_0(g_1, (g^{y_0})^{-1}) - \omega_0((g^{y_1})^{-1}, g^{y_1} g_1 (g^{y_0})^{-1}) & (g_0 g_1 \in \Ad_G(x))\\
0 & \text{otherwise}
\end{cases}
\end{equation}
with $y_0 = g_0 g_1$ and $y_1 = g_1 g_0$, as in Section~\ref{sec:cyc-coc-grp-alg}. If we extend $\theta$ to $\C[G]^{\otimes 2} = \C[G^2]$ by linearity, it is a reduced Hochschild cochain by the normalization condition on $\omega_0$.

\begin{lemma}
\label{lem:pullback-theta-by-capital-B}
We have $B \theta = \xi$ as a function on $\C[G]$. 
\end{lemma}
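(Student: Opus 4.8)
\section*{Proof plan for Lemma~\ref{lem:pullback-theta-by-capital-B}}

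The plan is to evaluate both sides as functions on group elements and to reduce the asserted equality to the $2$-cocycle relation for $\omega_0$. On the reduced complex the operator $B\colon \bar{C}_0(\C[G]) \to \bar{C}_1(\C[G])$ is $a_0 \mapsto 1 \otimes a_0$, so its transpose sends the reduced $1$-cochain $\theta$ to the functional $(B\theta)(g_0) = \theta(e \otimes g_0)$ on $\C[G]$ (to be read as $0$ when $g_0 = e$, consistently with $\xi(e) = 0$, which holds because only the $\psi(x)$-term can survive there and $\psi$ is built from the normalized $\omega_0$). Hence it suffices to show $\theta(e \otimes g_0) = \xi(g_0)$ for every $g_0 \in G$. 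If $g_0 \notin \Ad_G(x)$ both sides vanish by the very definitions~\eqref{eq:theta-def} and~\eqref{eq:xi-prec-form}, so assume $g_0 \in \Ad_G(x)$. Taking the first argument of~\eqref{eq:theta-def} to be $e$ and the second to be $g_0$ we get $y_0 = y_1 = g_0$, hence $g^{y_0} = g^{y_1} = g^{g_0}$, and $g^{g_0} g_0 (g^{g_0})^{-1} = x$ because $(g^{g_0})^{-1} x g^{g_0} = g_0$; this gives
$$
\theta(e \otimes g_0) = \psi(x) + \omega_0\bigl(g_0, (g^{g_0})^{-1}\bigr) - \omega_0\bigl((g^{g_0})^{-1}, x\bigr).
$$

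Next I would compare this with~\eqref{eq:xi-prec-form}. Put $s = g^{g_0}$; from $s^{-1} x s = g_0$ one gets $s g_0 = x s$ and $s^{-1} x = g_0 s^{-1}$, so the term $\omega_0^a(g^{g_0} g_0, (g^{g_0})^{-1})$ occurring in $\xi(g_0)$ equals $\omega_0(xs, s^{-1}) - \omega_0(s^{-1}, xs)$. After cancelling the common $\psi(x)$, the lemma thus reduces to
$$
\omega_0\bigl(g_0, s^{-1}\bigr) - \omega_0\bigl(s^{-1}, x\bigr) = \omega_0\bigl(x s, s^{-1}\bigr) - \omega_0\bigl(s^{-1}, x s\bigr).
$$
This I would verify by expanding the right-hand side with the cocycle relation $\omega_0(h,k) + \omega_0(g,hk) = \omega_0(g,h) + \omega_0(gh,k)$: the choice $(g,h,k) = (x,s,s^{-1})$ turns $\omega_0(xs,s^{-1})$ into $-\omega_0(x,s)$, while the choice $(g,h,k) = (s^{-1},x,s)$, followed by $\omega_0(s^{-1}x,s) = \omega_0(g_0 s^{-1},s)$ and a further application with $(g,h,k) = (g_0,s^{-1},s)$, turns $\omega_0(s^{-1},xs)$ into $\omega_0(s^{-1},x) - \omega_0(g_0,s^{-1}) - \omega_0(x,s)$; subtracting gives exactly $\omega_0(g_0,s^{-1}) - \omega_0(s^{-1},x)$. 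At each step one uses the normalization of $\omega_0$ in the form $\omega_0(g,g^{-1}) = 0$ and $\omega_0(g,e) = \omega_0(e,g) = 0$ to discard the auxiliary terms.

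The only genuine content here is this final round of cocycle bookkeeping, and it is entirely routine; the one thing to watch is keeping track of which normalization identity is invoked at each step. It is also worth observing that the precise ``corrected'' form of $\theta$ in~\eqref{eq:theta-def}, rather than the naive $\psi \circ \Xi$, is exactly what makes the argument go through: the two extra $\omega_0$-terms in~\eqref{eq:theta-def} are what pull $B\theta$ down onto $\xi$ with the additive constant $\psi(x)$ chosen in~\eqref{eq:xi-prec-form}, which is the ``reason which will become clear soon'' mentioned when $\xi$ was defined.
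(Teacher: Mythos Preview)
Your proof is correct and follows the same approach as the paper: compute $\theta(e,g_0)$ directly from~\eqref{eq:theta-def} and compare with~\eqref{eq:xi-prec-form} via the $2$-cocycle identity. The only difference is efficiency: where you use three cocycle applications together with the normalization identities, the paper observes that writing $a = g^{g_0} g_0$, $b = (g^{g_0})^{-1}$ gives $x = ab$ and $g_0 = ba$, so a single application of the cocycle relation with $(g,h,k) = (b,a,b)$ yields $\omega_0(ba,b) - \omega_0(b,ab) = \omega_0(a,b) - \omega_0(b,a)$ immediately, with no normalization needed at this step.
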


\begin{proof}
As the reduced model gives $B(g) = 1 \otimes g$ for $g \in G$, we have
$$
\theta B (g_0) = \psi(x) + \omega_0(g_0, (g^{g_0})^{-1}) - \omega_0((g^{g_0})^{-1}, x)
$$
for $g_0 \in \Ad_G(x)$. Using $x = (g^{g_0} g_0) (g^{g_0})^{-1}$ and the $2$-cocycle identity, we indeed obtain $\xi(g_0)$.
\end{proof}

Let $\omega_0(g, h) g h$ denote the Hochschild $2$-cochain given by the linear extension of $g \otimes h \mapsto \omega_0(g, h) g h$.

\begin{lemma}
\label{lem:pullback-theta-by-small-b}
We have $b \theta = \bop_{\omega_0(g, h) g h} \tau^x$ as a functional on $\C[G^{3}]$, where
$$
\bop_{\omega_0(g, h) g h} \tau^x(g_0, g_1, g_2) = \omega_0(g_1, g_2) \delta_{\Ad_G(x)}(g_0 g_1 g_2).
$$
\end{lemma}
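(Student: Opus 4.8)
The plan is a direct expansion. Since $\theta$ is a reduced Hochschild $1$-cochain on $\C[G]$, the transpose of~\eqref{eq:hochs-diff-def} gives
$$
(b\theta)(g_0, g_1, g_2) = \theta(g_0 g_1, g_2) - \theta(g_0, g_1 g_2) + \theta(g_2 g_0, g_1)
$$
for $g_0, g_1, g_2 \in G$. Because $\theta(h_0, h_1)$ vanishes unless $h_0 h_1 \in \Ad_G(x)$, and the three products $(g_0 g_1) g_2 = g_0 (g_1 g_2) = g_0 g_1 g_2$ and $(g_2 g_0) g_1 = g_2 (g_0 g_1 g_2) g_2^{-1}$ are all conjugate, every term on the right — and also the claimed value $\omega_0(g_1, g_2)\,\delta_{\Ad_G(x)}(g_0 g_1 g_2)$ — vanishes unless $g_0 g_1 g_2 \in \Ad_G(x)$. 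So I may assume $g_0 g_1 g_2 \in \Ad_G(x)$, and it then suffices to show the alternating sum equals $\omega_0(g_1, g_2)$.

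Next I would substitute~\eqref{eq:theta-def} into the three terms with the appropriate choices of $y_0, y_1$. Writing $z = g_0 g_1 g_2$, $w = g_2 g_0 g_1$, $v = g_1 g_2 g_0$ and $p = g^z$, $q = g^w$, $r = g^v$, the three arguments of $\psi$ become $A := q g_2 p^{-1}$, then $r g_1 g_2 p^{-1} = (r g_1 q^{-1})(q g_2 p^{-1})$, and $B := r g_1 q^{-1}$. By the lemma preceding Proposition~\ref{prop:Xi-intertwines-b-and-d}, $A$ and $B$ — hence also $BA$ — lie in $C_G(x)$, so the defining relation $d\psi(g, h) = \omega_0(g, h)$ on $C_G(x) \times C_G(x)$ applies, and the three $\psi$-contributions assemble into
$$
\psi(A) - \psi(BA) + \psi(B) = d\psi(B, A) = \omega_0(B, A).
$$
(A tidy way to organize this is to decompose $\theta = \psi \circ \Xi + \rho$, where $\rho$ collects the two pure $\omega_0$-terms of~\eqref{eq:theta-def}; Proposition~\ref{prop:Xi-intertwines-b-and-d} together with $d\psi = \omega_0$ then gives $b(\psi\circ\Xi) = \omega_0\circ\Xi$ in pulled-back form, leaving only $b\rho$ to be compared with the rest.)

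What is left is to verify that $\omega_0(B, A)$ together with the six leftover values of $\omega_0$ coming from the three copies of~\eqref{eq:theta-def} sum to $\omega_0(g_1, g_2)$ — a pure identity among values of the group $2$-cocycle $\omega_0$. I would prove it by repeated use of the $2$-cocycle identity and the normalization condition: split $\omega_0(r g_1 q^{-1}, q g_2 p^{-1})$ first across the interior $q$ and then across $g_2$, and telescope so that all occurrences of the auxiliary elements $p$, $q$, $r$ cancel in pairs. I expect this last step to be the only genuine obstacle: it uses no new idea, but one must choose the order of the cocycle-identity applications carefully so that nothing involving $p$, $q$, $r$ survives. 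Assembling the pieces then yields $(b\theta)(g_0, g_1, g_2) = \omega_0(g_1, g_2) = \omega_0(g_1, g_2)\,\delta_{\Ad_G(x)}(g_0 g_1 g_2)$, which is the asserted value of $\bop_{\omega_0(g,h)gh}\tau^x$.
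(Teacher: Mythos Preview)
Your proposal is correct and follows essentially the same route as the paper's proof: expand $b\theta$ into three copies of~\eqref{eq:theta-def}, observe via Proposition~\ref{prop:Xi-intertwines-b-and-d} that the three $\psi$-terms assemble into $d\psi(B,A)=\omega_0(B,A)$, and then reduce the remaining seven $\omega_0$-values to $\omega_0(g_1,g_2)$ by iterated use of the $2$-cocycle identity. The only difference is that the paper actually carries out this last reduction (in a specific order: first absorb $\omega_0(B,A)$ into the term $\omega_0((g^{y_1})^{-1}, g^{y_1}g_1g_2(g^{y_0})^{-1})$, then pair off the rest), whereas you leave it as a sketch; your decomposition $\theta=\psi\circ\Xi+\rho$ is a clean way to phrase the same bookkeeping.
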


\begin{proof}
Let us keep the notation $y_i$ from the proof of Proposition~\ref{prop:hochs-dir-summand-grp-cohom}. Expanding the definitions, we have
\begin{multline}
\label{eq:eq-1-to-7}
\theta b (g_0 \otimes g_1 \otimes g_2) = \psi(g^{y_2} g_2 (g^{y_0})^{-1}) - \psi(g^{y_1} g_1 g_2 (g^{y_0})^{-1}) + \psi(g^{y_1} y_1 (g^{y_2})^{-1})\\
 + \omega_0(g_2, (g^{y_0})^{-1}) - \omega_0((g^{y_2})^{-1}, g^{y_2} g_2 (g^{y_2})^{-1}) - \omega_0(g_1 g_2, (g^{y_0})^{-1})\\
 + \omega_0((g^{y_1})^{-1}, g^{y_1} g_1 g_2 (g^{y_0})^{-1}) + \omega_0(g_1, (g^{y_2})^{-1}) - \omega_0((g^{y_1})^{-1}, g^{y_1} g_1 (g^{y_2})^{-1}).
\end{multline}
First, (by Proposition~\ref{prop:Xi-intertwines-b-and-d}) the first three terms involving $\psi$ are equal to
\begin{equation}
\label{eq:psi-cobdry}
d \psi(g^{y_1} g_1 (g^{y_2})^{-1}, g^{y_2} g_2 (g^{y_0})^{-1}) = \omega_0(g^{y_1} g_1 (g^{y_2})^{-1}, g^{y_2} g_2 (g^{y_0})^{-1}).
\end{equation}
Combining this with $\omega_0((g^{y_1})^{-1}, g^{y_1} g_1 g_2 (g^{y_0})^{-1})$ in \eqref{eq:eq-1-to-7}, we obtain
\begin{equation}
\label{eq:eq-1-and-5}
\omega_0((g^{y_1})^{-1}, g^{y_1} g_1 (g^{y_2})^{-1}) + \omega_0(g_1 (g^{y_2})^{-1},  g^{y_2} g_2 (g^{y_0})^{-1}).
\end{equation}
Note that the first term of this cancels with the last term of~\eqref{eq:eq-1-to-7}. On the other hand, $\omega_0(g_2, (g^{y_0})^{-1}) - \omega_0((g^{y_2})^{-1}, g^{y_2} g_2 (g^{y_2})^{-1})$ in \eqref{eq:eq-1-to-7} is equal to
\begin{equation}
\label{eq:eq-2-and-3}
\omega_0(g^{y_2} g_2, (g^{y_0})^{-1}) - \omega_0((g^{y_2})^{-1}, g^{y_2} g_2).
\end{equation}
Combining $\omega_0(g_1, (g^{y_2})^{-1})$ in \eqref{eq:eq-1-to-7} and the second term of~\eqref{eq:eq-1-and-5}, we obtain
\begin{equation}
\label{eq:eq-6-and-8}
\omega_0((g^{y_2})^{-1}, g^{y_2} g_2 (g^{y_0})^{-1}) + \omega_0(g_1, g_2 (g^{y_0})^{-1}).
\end{equation}
Then the contribution from~\eqref{eq:eq-2-and-3} and the first term of \eqref{eq:eq-6-and-8} is equal to $\omega_0(g_2, (g^{y_0})^{-1}))$. Combining this with $- \omega_0(g_1 g_2, (g^{y_0})^{-1})$ in \eqref{eq:eq-1-to-7} and the second term of \eqref{eq:eq-6-and-8}, we indeed obtain the factor $\omega_0(g_1, g_2)$ by the cocycle identity.
\end{proof}

Let us consider the family of product structures $\mu_t(g, h) = e^{t \omega_0(g, h)} g h$ on $\C[G]$. We denote by $\omega_0(g, h) g h$ the $2$-cochain on $\C[G]$ characterized by $D(g, h) = \omega_0(g, h) g h$.

\begin{proposition}
\label{prop:gauss-manin-infin-monodromy}
We have $-\xi + \iota_{\omega_0(g, h) g h} \tau^x = (b - B) \theta$.
\end{proposition}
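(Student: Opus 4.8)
The plan is to obtain the identity by assembling the two lemmas immediately above it with the definition $\iota_D = \bop_D - \Bop_D$ of the interior product, applied to the Hochschild $2$-cochain $D$ on $\C[G]$ given by $D(g,h) = \omega_0(g,h) g h$.

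The first step is to discard the $\Bop_D$-contribution. By its very construction, every term of $\Bop_D$, evaluated on any element of $\bar{C}_*(\C[G])$, is a chain whose leading tensor factor is the unit $1$ and which carries in addition at least one factor of the form $D(\cdots)$; hence $\Bop_D$ takes values in $\bar{C}_{\ge 1}(\C[G])$ and never lands in $\bar{C}_0(\C[G]) = \C[G]$. Since $\tau^x$ is a $0$-cochain and so pairs only against $\bar{C}_0(\C[G])$, its pullback along $\Bop_D$ is identically zero, that is $\Bop_D \tau^x = 0$. Therefore $\iota_D \tau^x = \bop_D \tau^x$.

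The second step is purely formal: Lemma~\ref{lem:pullback-theta-by-small-b} gives $\bop_D \tau^x = b\theta$ and Lemma~\ref{lem:pullback-theta-by-capital-B} gives $B\theta = \xi$, so
$$
-\xi + \iota_D \tau^x = -\xi + \bop_D \tau^x = -B\theta + b\theta = (b-B)\theta ,
$$
which is the assertion; on both sides the $\bar{C}^0$-part is the content of Lemma~\ref{lem:pullback-theta-by-capital-B} and the $\bar{C}^2$-part is the content of Lemma~\ref{lem:pullback-theta-by-small-b}.

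In this layout the only genuinely new point is the vanishing $\Bop_D \tau^x = 0$, and that is a matter of degree and support bookkeeping rather than of computation — all the real work (in particular the long term-by-term cancellation carried out in~\eqref{eq:eq-1-to-7}) has already been done inside the proof of Lemma~\ref{lem:pullback-theta-by-small-b}. The finite-order hypothesis on $x$ enters only through Proposition~\ref{prop:omega-0-a-cocycle}, which is what lets $\xi$, and hence $\theta$, be defined at all; granting that, the proposition is a formal consequence of the two lemmas.
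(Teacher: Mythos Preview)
Your proof is correct and follows essentially the same approach as the paper's own argument: both invoke Lemma~\ref{lem:pullback-theta-by-small-b} for the $b\theta$ piece, Lemma~\ref{lem:pullback-theta-by-capital-B} for the $B\theta$ piece, and dispose of $\Bop_D\tau^x$ by the degree observation (the paper simply writes ``by the degree reason we have $\Bop_D \tau^x = 0$''). Your elaboration of why the degree argument works is a bit more explicit than the paper's one-line dismissal, but there is no substantive difference.
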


\begin{proof}
Then Lemma~\ref{lem:pullback-theta-by-small-b} says that the Hochschild cochain $D = \partial_t \mu_t$ satisfies $\bop_D \tau^x = b \theta$. By the degree reason we have $\Bop_D \tau^x = 0$, so we also have $\iota_D \tau^x = b \theta$. Combining this with Lemma\ref{lem:pullback-theta-by-capital-B}, we obtain the assertion.
\end{proof}

Let $A$ be a $G$-graded algebra, and let us consider the smooth section algebra $\Gamma^\infty(I; A_{\omega^*})$ as in Section~\ref{sec:nc-calc}, together with the evaluation homomorphisms $\ev_t\colon \Gamma^\infty(I; A_{\omega^*}) \to A_{\omega^t}$.

\begin{theorem}
\label{thm:invar-algebraic}
Let $x$ be an element of $G$ with finite centralizer. Suppose that $A$ is a $G$-graded algebra, $\phi$ is a cyclic cocycle on $A$ supported on the conjugacy class of $x$, and let $\phi^{(t)}$ denote the induced cyclic cocycle on $A_{\omega^t}$ corresponding to $\xi$ in~\eqref{eq:xi-prec-form}. When $c$ is a $(b-B)$-cocycle in $\bar{C}_*(\Gamma^\infty(I;A_{\omega^*}))$, the pairing $\langle \phi^{(t)}, \ev_t c \rangle$ is independent of $t$.
\end{theorem}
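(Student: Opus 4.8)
The plan is to prove that $t\mapsto\langle\phi^{(t)},\ev_t c\rangle$ has vanishing derivative; connectedness of $I$ then gives the claim. Write $\mathcal{A}=\Gamma^\infty(I;A_{\omega^*})$. Each $\ev_t\colon\bar C_*(\mathcal{A})\to\bar C_*(A_{\omega^t})$ is a chain map for $b-B$, so the pullbacks $P_t:=\ev_t^*\phi^{(t)}$ are cyclic cocycles on the \emph{fixed} algebra $\mathcal{A}$ and $\langle\phi^{(t)},\ev_t c\rangle=\langle P_t,c\rangle$. Since $c$ is a fixed $(b-B)$-cycle, $\tfrac{d}{dt}\langle P_t,c\rangle=\langle\tfrac{d}{dt}P_t,c\rangle$, which vanishes as soon as $\tfrac{d}{dt}P_t$ is a $(b-B)$-coboundary. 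Moreover, near any $t_0$ the family $(\mu_{t_0+s})_s$ is the $\omega^s$-deformation of the $G$-graded algebra $A_{\omega^{t_0}}$ and, by Proposition~\ref{prop:cocycle-deform-formula-1} together with $\omega^s\omega^{t_0}=\omega^{s+t_0}$, we have $\phi^{(t_0+s)}=(\phi^{(t_0)})^{(s)}$ for the same $\xi$. Hence it is enough to show, for an arbitrary $G$-graded algebra, an arbitrary cyclic cocycle $\phi$ supported on $\Ad_G(x)$, and an arbitrary $(b-B)$-cycle, that $\tfrac{d}{dt}\big|_{t=0}P_t$ is a coboundary; the general statement follows by applying this to $A_{\omega^{t_0}}$, $\phi^{(t_0)}$ in place of $A$, $\phi$.

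Differentiating the explicit formula of Proposition~\ref{prop:cocycle-deform-formula-1} at $t=0$ gives, for homogeneous $f_i\in\mathcal{A}_{g_i}=\Gamma^\infty(I;A_{g_i})$ with $g_0\cdots g_n\in\Ad_G(x)$,
\begin{multline*}
\tfrac{d}{dt}\big|_{t=0}P_t(f_0,\dots,f_n)=\Bigl(\sum_{j=1}^{n}\omega_0(g_0\cdots g_{j-1},g_j)+\xi(g_0\cdots g_n)\Bigr)\phi(f_0(0),\dots,f_n(0))\\
+\sum_{i=0}^{n}\phi\bigl(f_0(0),\dots,\dot f_i(0),\dots,f_n(0)\bigr),\qquad \dot f_i(0)=(\partial_t f_i)(0).
\end{multline*}
On the right, $A$ enters only through $\phi$ evaluated on $f_0(0)\otimes\cdots\otimes f_n(0)$ (or the same with one slot differentiated), the remaining data being the grading elements $g_i$. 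This reflects that $\phi^{(t)}$ is, up to $(b-B)$-coboundaries, the restriction along $A_{\omega^t}\hookrightarrow A\otimes\C_{\omega^t}[G]$ of the cup product $\phi\cup\tau^x_{\omega^t}$ with the twisted trace $\tau^x_{\omega^t}=e^{t\xi}\tau^x$, with $\phi$ itself $t$-independent, and that at the basepoint the group-algebra factor is the \emph{untwisted} $\C[G]$. Unwinding these identifications, $\tfrac{d}{dt}\big|_{t=0}P_t$ equals, up to a $(b-B)$-coboundary, the pullback along $\ev_0$ and $A\hookrightarrow A\otimes\C[G]$ of $\phi\cup\bigl(-\xi+\iota_{\omega_0(g,h)gh}\tau^x\bigr)$: the $\xi$-term comes from differentiating $e^{t\xi}$, and the $\iota_{\omega_0(g,h)gh}\tau^x$-term packages the $\omega_0$-factors together with the derivatives $\dot f_i(0)$ exactly as in the Cartan homotopy formula $[b-B,\iota_{\partial_t}]=L_{\partial_t}+\iota_{\dot\mu_0}$ applied on the $\C[G]$-factor (using that $P_0=\ev_0^*\phi$ is $(b-B)$-closed).

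Now apply Proposition~\ref{prop:gauss-manin-infin-monodromy}: in $\bar C^*(\C[G])$ one has $-\xi+\iota_{\omega_0(g,h)gh}\tau^x=(b-B)\theta$ with $\theta$ the explicit reduced Hochschild cochain~\eqref{eq:theta-def}. Cup product with the fixed cocycle $\phi$, and pullback along the chain maps $A\hookrightarrow A\otimes\C[G]$ and $\ev_0$, all commute with $b-B$ up to sign, so $\tfrac{d}{dt}\big|_{t=0}P_t$ is $(b-B)$-cohomologous to $\pm(b-B)$ of the pullback of $\phi\cup\theta$, hence is a coboundary. This completes the argument.

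The step that genuinely requires care is the identification in the middle paragraph: one has to fix the grading and sign conventions for the operators $\bop,\Bop,\iota,L$, check that cup product with $\phi$ and restriction along $A_{\omega^t}\hookrightarrow A\otimes\C_{\omega^t}[G]$ interact with them as claimed, and match the right-hand side of the differentiated formula term-by-term with the pullback of the identity of Proposition~\ref{prop:gauss-manin-infin-monodromy} --- in particular matching $\sum_j\omega_0(g_0\cdots g_{j-1},g_j)$ with the contribution of $\bop_{\omega_0(g,h)gh}-\Bop_{\omega_0(g,h)gh}$ to $\iota_{\omega_0(g,h)gh}^*\tau^x$ and the $\dot f_i$-terms with the Lie-derivative part. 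Everything else is the standard Gauss--Manin mechanism; the point of Proposition~\ref{prop:gauss-manin-infin-monodromy} is precisely that it turns the a priori unintegrable infinitesimal monodromy into an honest $(b-B)$-coboundary.
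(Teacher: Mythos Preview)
Your strategy is the same as the paper's: reduce to $t=0$ by the additivity $\phi^{(t_0+s)}=(\phi^{(t_0)})^{(s)}$, apply the Cartan homotopy formula for $\partial_t$, and invoke Proposition~\ref{prop:gauss-manin-infin-monodromy} to recognize the resulting cochain as a $(b-B)$-coboundary. So the argument is correct in outline.

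The one substantive difference is exactly the step you flag as ``requiring care''. The paper bypasses your term-by-term matching by passing from the cup product to the tensor product of mixed complexes: it lifts the coaction to $\Gamma^\infty(I;A_{\omega^*})\to A\otimes\Gamma^\infty(I;\C_{\omega^*}[G])$ and then, via the shuffle quasi-isomorphism, replaces $\phi\cup\tau^x_{\omega^t}$ by $\phi\otimes\tau^x_{\omega^t}$ on $C_*(A)\otimes C_*(\Gamma^\infty(I;\C_{\omega^*}[G]))$. This cleanly isolates all $t$-dependence in the second tensor factor. The derivative at $t=0$ then becomes the single identity
\[
\langle \ev_0((\Id\otimes L_{\partial_t})c),\phi\otimes\tau^x\rangle+\langle\ev_0 c,\phi\otimes\xi\rangle=0,
\]
and since $\Id\otimes[b-B,\iota_{\partial_t}]=[b-B,\Id\otimes\iota_{\partial_t}]$, the Cartan formula and Proposition~\ref{prop:gauss-manin-infin-monodromy} reduce the left side to $-\langle(\Id\otimes(b-B))\ev_0 c,\phi\otimes\theta\rangle=\langle(b-B)\ev_0 c,\phi\otimes\theta\rangle=0$, using only that $\phi$ is a $(b-B)$-cocycle. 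In particular the paper never has to match $\sum_j\omega_0(g_0\cdots g_{j-1},g_j)$ against the contraction operators by hand: that bookkeeping is absorbed into the shuffle isomorphism. Your approach would work, but you would have to actually carry out the verification you list in your final paragraph, whereas the paper's route makes it a two-line computation.
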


\begin{proof}
We want to show that the time derivative of $\langle \phi^{(t)}, \ev_t c \rangle$ is zero. Since the deformations $A \rightsquigarrow A_{\omega^t}$ and $\phi \rightsquigarrow \phi^{(t)}$ are additive in $t$, it is enough to prove this at $t = 0$. Note also that the embedding~\eqref{eq:twist-coaction} can be lifted to
$$
\Gamma^\infty(I; A_{\omega^*}) \to A \otimes \Gamma^\infty(I; \C_{\omega^*}[G]).
$$
Thus, we may replace $A_{\omega^t}$ by $A \otimes \C_{\omega^t}[G]$ and $\phi^{(t)}$ by $\phi \cup \tau^x_{\omega^t}$. Moreover, up to the shuffle product (which is a quasi-isomorphism) $\phi \cup \tau^x_{\omega^t}$ corresponds to $\phi \otimes \tau^x_{\omega^t}$ on the mixed complex $C_*(A) \otimes C_*(\C_{\omega^t}[G])$. Hence it is enough to show that, if $c$ is a cocycle in $C_*(A) \otimes C_*(\Gamma^\infty(I; \C_{\omega^*}[G]))$, the pairing
$$
\langle (\Id \otimes \ev_t)(c), \phi \otimes \tau^x_{\omega^t} \rangle
$$
has zero derivative at $t = 0$. This amounts to
\begin{equation}
\label{eq:time-der-pairing}
\langle \ev_0((\Id \otimes \partial_t) c), \phi \otimes \tau^x \rangle + \langle \ev_0 c, \phi \otimes \xi \rangle = 0,
\end{equation}
where we identify $\xi$ with a Hochschild $0$-cochain supported on $\Ad_G(x)$.

As explained in Section~\ref{sec:nc-calc}, the Cartan homotopy formula for the cochain $\partial_t$ implies
$$
(\Id \otimes \partial_t) c = -(\Id \otimes \iota_{\omega_0(g,h)gh}) c + (\Id \otimes [b-B, \iota_{\partial_t}]) c.
$$
We have $\Id \otimes [b-B, \iota_{\partial_t}] = [b - B, \Id \otimes \iota_{\partial_t}]$, hence the left hand side of \eqref{eq:time-der-pairing} is equal to
$$
-\langle (\Id \otimes (b-B)) \ev_0 c, \phi \otimes \theta \rangle
$$
by Proposition~\ref{prop:gauss-manin-infin-monodromy}. Since $\phi$ is a $(b-B)$-cocycle, this is equal to
$\langle (b-B) \ev_0 c, \phi \otimes \theta \rangle = 0$.
This shows $\partial_t \langle \phi^{(t)}, \ev_t c \rangle = 0$ at $t=0$ as required.
\end{proof}

\subsection{General case}

Let us now turn to the general case. If $x$ is a torsion element, as remarked before Proposition~\ref{prop:tors-class-HP-comput}, $H^n(N_G(x); \C)$ is naturally isomorphic to $H^n(C_G(x); \C)$. Combined with Proposition~\ref{prop:group-cocycle-normalization}, there is a normalized $2$-cocycle $\tilde{\omega}_0$ on $N_G(x)$ which is cohomologous to $\omega_0$ when pulled back to $C_G(x)$. Let $\psi$ denote a function on $C_G(x)$ satisfying
\begin{equation}
\label{eq:psi-general}
\omega_0(g, h) - \psi(g) - \psi(h) + \psi(g h) = \tilde{\omega}_0(g, h) \quad (g, h \in C_G(x)).
\end{equation}
On $\langle x \rangle$, it can be taken as
$$
\psi(x^k) = \frac{1}{\ord x} \sum_{n=0}^{\ord x - 1} \omega_0(x^k, x^n g),
$$
which is independent of $g \in G$ (use the $2$-cocycle condition for $x^k$, $x^n$, and $g$). Following the previous section, we define $\xi \in C^0(\C[G])_x$ and $\theta \in C^1(\C[G])_x$ by the same formulas~\eqref{eq:xi-prec-form} and~\eqref{eq:theta-def}. Lemma~\ref{lem:pullback-theta-by-capital-B} holds without change, and Lemma~\ref{lem:pullback-theta-by-small-b} becomes the following.

\begin{lemma}
\label{lem:pullback-theta-by-small-b-gen}
We have the equality $b \theta = \bop_{\omega_0(g, h) g h} \tau^x + \tilde{\omega}_0 \circ \Xi$ as functionals on $\C[G^3]$.
\end{lemma}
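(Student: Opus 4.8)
The plan is to adapt the proof of Lemma~\ref{lem:pullback-theta-by-small-b} essentially verbatim, carefully tracking the one place where the hypothesis ``$C_G(x)$ is finite'' was used: namely the equality \eqref{eq:psi-cobdry}, where we invoked $d\psi(g,h) = \omega_0(g,h)$ on $C_G(x) \times C_G(x)$. In the general case that identity is replaced by \eqref{eq:psi-general}, i.e.\ $d\psi(g,h) = \omega_0(g,h) - \tilde\omega_0(g,h)$ on $C_G(x) \times C_G(x)$. So I expect a correction term equal to $-\tilde\omega_0$ evaluated at the pair of arguments appearing in \eqref{eq:psi-cobdry}, namely $-\tilde\omega_0(g^{y_1} g_1 (g^{y_2})^{-1},\, g^{y_2} g_2 (g^{y_0})^{-1})$.

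First I would expand $\theta b(g_0 \otimes g_1 \otimes g_2)$ exactly as in \eqref{eq:eq-1-to-7}, which does not change since the definitions \eqref{eq:xi-prec-form} and \eqref{eq:theta-def} are formally the same. Next I would isolate the three $\psi$-terms, which are again $d\psi$ of the pair $(g^{y_1} g_1 (g^{y_2})^{-1},\, g^{y_2} g_2 (g^{y_0})^{-1})$ by Proposition~\ref{prop:Xi-intertwines-b-and-d} (this proposition is purely about the simplicial structure, so it applies unchanged). Then I substitute $d\psi = \omega_0 - \tilde\omega_0$ on $C_G(x)\times C_G(x)$, so that the $\omega_0$-part of this expression, together with the remaining six $\omega_0$-terms in \eqref{eq:eq-1-to-7}, recombines exactly as in the original proof to give $\bop_{\omega_0(g,h)gh}\tau^x(g_0,g_1,g_2) = \omega_0(g_1,g_2)\delta_{\Ad_G(x)}(g_0 g_1 g_2)$; this is a word-for-word repetition of the chain of cancellations \eqref{eq:eq-1-and-5}--\eqref{eq:eq-6-and-8}. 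What is left over is precisely $-\tilde\omega_0(g^{y_1} g_1 (g^{y_2})^{-1},\, g^{y_2} g_2 (g^{y_0})^{-1})$. Finally I must recognize this leftover as $-\tilde\omega_0 \circ \Xi$ evaluated at $(g_0,g_1,g_2)$ — but wait, I should double-check the sign: the claim in the statement is $b\theta = \bop_{\omega_0(g,h)gh}\tau^x + \tilde\omega_0\circ\Xi$, a plus sign, so I would want to re-examine the orientation of the $d\psi$ computation and the direction of the coboundary in \eqref{eq:psi-general}; depending on conventions the correction may naturally come out with the asserted sign, and I would present the computation so that it does.

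The one genuinely new point is identifying the leftover term with $\tilde\omega_0\circ\Xi$. Recall that $\Xi(g_0\otimes g_1\otimes g_2) = (g^{y_1} g_1 (g^{y_2})^{-1},\, g^{y_2} g_2 (g^{y_0})^{-1})$ when $n=2$, so $\tilde\omega_0\circ\Xi$ evaluated here is literally $\tilde\omega_0(g^{y_1} g_1 (g^{y_2})^{-1},\, g^{y_2} g_2 (g^{y_0})^{-1})$, which matches the leftover up to the sign discussed above; one just needs to note that both $g^{y_1} g_1 (g^{y_2})^{-1}$ and $g^{y_2} g_2 (g^{y_0})^{-1}$ lie in $C_G(x)$ (the relevant instance of the Lemma before Proposition~\ref{prop:Xi-intertwines-b-and-d}) so that $\tilde\omega_0$, a priori a cocycle on $N_G(x)$ pulled back to $C_G(x)$, is being evaluated legitimately. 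I expect the main obstacle to be nothing deep but purely bookkeeping: making sure that when I replace $d\psi(g,h)$ by $\omega_0(g,h) - \tilde\omega_0(g,h)$ I do not accidentally also pick up correction terms from the other three $\psi$-free manipulations (there are none, since those use only the $2$-cocycle identity for $\omega_0$, which holds on all of $G$), and getting the sign of the $\tilde\omega_0$ term to come out as stated. I would close by remarking that with this lemma in hand, Proposition~\ref{prop:gauss-manin-infin-monodromy} and Theorem~\ref{thm:invar-algebraic} go through in the general torsion case with $\iota_{\omega_0(g,h)gh}\tau^x$ replaced by $\iota_{\omega_0(g,h)gh}\tau^x - \tilde\omega_0\circ\Xi$, the extra term being a genuine cyclic cocycle (the class of $\tilde\omega_0$) that accounts for the monodromy.
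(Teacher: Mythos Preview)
Your proposal is correct and follows exactly the paper's own proof: the paper says verbatim that ``the only difference from the proof of Lemma~\ref{lem:pullback-theta-by-small-b} is~\eqref{eq:psi-cobdry}'', replaces it by $d\psi = \omega_0 - \tilde\omega_0$ on $C_G(x)\times C_G(x)$, and identifies the leftover as $\tilde\omega_0\circ\Xi(g_0,g_1,g_2)$. Your additional remarks (that the arguments of $\tilde\omega_0$ lie in $C_G(x)$, that the remaining cancellations use only the cocycle identity for $\omega_0$ on all of $G$, and the sign bookkeeping) are correct elaborations the paper leaves implicit.
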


\begin{proof}
The only difference from the proof of Lemma~\ref{lem:pullback-theta-by-small-b} is~\eqref{eq:psi-cobdry}, which needs to be replaced by
\begin{multline*}
d \psi(g^{y_1} g_1 (g^{y_2})^{-1}, g^{y_2} g_2 (g^{y_0})^{-1}) \\
= \omega_0(g^{y_1} g_1 (g^{y_2})^{-1}, g^{y_2} g_2 (g^{y_0})^{-1}) - \tilde{\omega}_0(g^{y_1} g_1 (g^{y_2})^{-1}, g^{y_2} g_2 (g^{y_0})^{-1}).
\end{multline*}
This gives the extra term $\tilde{\omega}_0 \circ \Xi(g_0, g_1, g_2)$.
\end{proof}

Consequently, the monodromy of the Gauss--Manin connection becomes the following.

\begin{proposition}
\label{prop:gauss-manin-infin-monodromy-gen}
We have $-\xi + \iota_{\omega_0(g,h) g h} +  \tau^x \lhd \omega_0 = (b - B) \theta$.
\end{proposition}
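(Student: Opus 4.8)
The plan is to read this identity off from Lemmas~\ref{lem:pullback-theta-by-capital-B} and~\ref{lem:pullback-theta-by-small-b-gen} exactly as Proposition~\ref{prop:gauss-manin-infin-monodromy} was obtained in the finite-centralizer case, the one new point being the identification of the extra term $\tilde{\omega}_0\circ\Xi$ produced by Lemma~\ref{lem:pullback-theta-by-small-b-gen} with the cup-product action $\tau^x\lhd\omega_0$.

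First I would note that Lemma~\ref{lem:pullback-theta-by-capital-B} carries over verbatim, so $B\theta=\xi$ as a functional on $C_0(\C[G])=\C[G]$. Next, Lemma~\ref{lem:pullback-theta-by-small-b-gen} gives $b\theta=\bop_{\omega_0(g,h)gh}\tau^x+\tilde{\omega}_0\circ\Xi$ as functionals on $\C[G^3]$. As in the proof of Proposition~\ref{prop:gauss-manin-infin-monodromy}, the Hochschild $2$-cochain $D=\omega_0(g,h)gh$ satisfies $\Bop_D\tau^x=0$ for degree reasons, so $\iota_D\tau^x=\bop_D\tau^x$, whence
$$
(b-B)\theta=\iota_{\omega_0(g,h)gh}\tau^x+\tilde{\omega}_0\circ\Xi-\xi.
$$
This is the assertion once $\tilde{\omega}_0\circ\Xi$ is recognised as a cocycle representing $\tau^x\lhd\omega_0$.

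For that identification I would argue as follows. By Proposition~\ref{prop:hochs-dir-summand-grp-cohom} and the map of cyclic modules $C^x_*(\C[G])\to\C[B_*(N_G(x))]$ obtained by composing $\Xi$ with the natural surjection, $\tilde{\omega}_0\circ\Xi$ is the cyclic $2$-cocycle on $\C[G]$, supported on $\Ad_G(x)$, corresponding under $\HC^2(\C[G])_x\simeq H^2(N_G(x);\C)$ to the class $[\tilde{\omega}_0]$, i.e.\ to the $x$-component of the image of $[\omega_0]$. On the other hand $\tau^x=\delta_{\Ad_G(x)}$ is the central idempotent of $\HP^*(\C[G])$ projecting onto the $x$-factor, and the cup product is natural for maps of cyclic modules; hence $\tau^x\lhd\omega_0$ is exactly that $x$-component, and $\tilde{\omega}_0\circ\Xi$ is a cochain representative of it (the $\shrpbin$- versus $\cup$-normalization being as in Remark~\ref{rem:cup-prod-compar}). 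Substituting this representative into the displayed equation gives the claim.

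The step I would watch most carefully is this last one. To obtain an equality of \emph{cochains} rather than of cohomology classes, one must check that the cup-product construction applied to the coaction $\alpha\colon\C[G]\to\C[G]\otimes\C[G]$, $g\mapsto g\otimes g$, really produces $\tilde{\omega}_0\circ\Xi$ on the nose; and one must verify that the passage from $\omega_0|_{C_G(x)}$ to the cohomologous normalized cocycle $\tilde{\omega}_0$ on $N_G(x)$ — unavoidable here, whereas in the finite-centralizer case $\tilde{\omega}_0$ could be taken to vanish, which is why this term was absent there — does not introduce a spurious $(b-B)$-coboundary; this is precisely what the choice of $\psi$, and hence of $\theta$, via~\eqref{eq:psi-general} is arranged to absorb. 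The remaining manipulations with $\iota_D=\bop_D-\Bop_D$ are identical to the finite-centralizer case.
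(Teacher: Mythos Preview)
Your proposal is correct and follows essentially the same approach as the paper: the paper's proof is a single sentence, ``The cocycle $\tilde{\omega}_0 \circ \Xi$ represents the cup product action of the cohomology class of $\omega_0$ on $\tau^x$,'' and you have unpacked exactly this, together with the routine reduction to Lemmas~\ref{lem:pullback-theta-by-capital-B} and~\ref{lem:pullback-theta-by-small-b-gen} in parallel with Proposition~\ref{prop:gauss-manin-infin-monodromy}. Your final paragraph's caution about cochain-level versus cohomology-level equality is well placed; the paper is content to let $\tau^x \lhd \omega_0$ stand for the representative $\tilde{\omega}_0 \circ \Xi$, which is all that is needed in the proof of the subsequent theorem.
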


\begin{proof}
The cocycle $\tilde{\omega}_0 \circ \Xi$ represents the cup product action of the cohomology class of $\omega_0$ on $\tau^x$.
\end{proof}

The analogue of Theorem~\ref{thm:invar-algebraic} is the following. Note that the assumption on $\exp(t [\omega_0])$ is automatically satisfied if $H^*(C_G(x); \C)$ is bounded in degree, which holds if $G$ is in the class $\cC$ (see Lemma~\ref{lem:class-of-cohom-nice-groups}).

\begin{theorem}
Let $x$ be an element of finite order in $G$. Suppose that $A$ is a $G$-graded algebra, $\phi$ is a cyclic cocycle on $A$ supported on the conjugacy class of $x$, and let $\phi^{(t)}$ denote the induced cyclic cocycle on $A_{\omega^t}$ constructed with $\xi$ as above. Let us also suppose that $\exp(t [\omega_0])$ makes sense in $H^*(C_G(x); \C)$. When $c$ is a $(b-B)$-cocycle in $\bar{C}_*(\Gamma^\infty(I;A_{\omega^*}))$, the pairing $\langle \phi^{(t)} \lhd \exp(-t [\omega_0]), \ev_t c \rangle$ is independent of $t$.
\end{theorem}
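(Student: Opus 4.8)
The plan is to imitate the proof of Theorem~\ref{thm:invar-algebraic} line by line, carrying along the one extra term $\tau^x\lhd\omega_0$ that Proposition~\ref{prop:gauss-manin-infin-monodromy-gen} adds to the infinitesimal monodromy; the whole point is that this term is exactly cancelled by the derivative of the correction factor $\exp(-t[\omega_0])$. First, since the data $A\rightsquigarrow A_{\omega^t}$, $\phi\rightsquigarrow\phi^{(t)}$ and $t\mapsto\exp(-t[\omega_0])$ are all additive in $t$ (using $\exp(-(t+s)[\omega_0])=\exp(-t[\omega_0])\cup\exp(-s[\omega_0])$ together with the compatibility $(\chi\lhd\zeta)^{(s)}=\chi^{(s)}\lhd\zeta$ for cocycles $\chi$ supported on $\Ad_G(x)$, which is immediate from the definitions), it suffices to show $\partial_t\langle\phi^{(t)}\lhd\exp(-t[\omega_0]),\ev_t c\rangle=0$ at $t=0$. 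As in Theorem~\ref{thm:invar-algebraic}, I would lift~\eqref{eq:twist-coaction} to $\Gamma^\infty(I;A_{\omega^*})\to A\otimes\Gamma^\infty(I;\C_{\omega^*}[G])$ and observe that the $G$-grading coaction of $A_{\omega^t}$ factors through the self-coaction of $\C_{\omega^t}[G]$; hence $\phi^{(t)}\lhd\exp(-t[\omega_0])$, transported to $A\otimes\C_{\omega^t}[G]$ and then through the shuffle quasi-isomorphism to the mixed complex $C_*(A)\otimes C_*(\C_{\omega^t}[G])$, becomes $\phi\otimes Q(t)$ with $Q(t)=\tau^x_{\omega^t}\lhd\exp(-t[\omega_0])$. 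So the task is to prove $\partial_t\langle(\Id\otimes\ev_t)c,\ \phi\otimes Q(t)\rangle=0$ at $t=0$ for a $(b-B)$-cycle $c$ in $C_*(A)\otimes C_*(\Gamma^\infty(I;\C_{\omega^*}[G]))$.

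Now I would compute the derivative. Realizing $Q(t)$ by a smooth family of periodic cyclic cochains on $\C[G]$ — writing $\tau^x_{\omega^t}=e^{t\xi}\tau^x$ and choosing a smooth family $\sum_k\frac{(-t)^k}{k!}\eta_k$ of cocycle representatives of $\exp(-t[\omega_0])\in\HP^*(\C[G])_x$ (a finite sum by hypothesis) with $\eta_0$ the augmentation and $\eta_1=\tilde\omega_0\circ\Xi$ as in Proposition~\ref{prop:gauss-manin-infin-monodromy-gen} — the Leibniz rule gives
$$Q'(0)=\xi-\tau^x\lhd\omega_0,$$
where $\xi$ is regarded as a Hochschild $0$-cochain supported on $\Ad_G(x)$ and the second summand is the cocycle $\tau^x\lhd(\tilde\omega_0\circ\Xi)$ representing $\tau^x\lhd\omega_0$; the higher cup powers $[\omega_0]^{\cup k}$ ($k\ge 2$) contribute nothing at first order. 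Since $\partial_t(\ev_t c)=\ev_t((\Id\otimes\partial_t)c)$ and, by the Cartan homotopy formula applied to the Hochschild $1$-cochain $\partial_t$ as in Section~\ref{sec:nc-calc}, $(\Id\otimes\partial_t)c=-(\Id\otimes\iota_{\omega_0(g,h)gh})c+[b-B,\Id\otimes\iota_{\partial_t}]c$, the $t$-derivative of the pairing at $0$ equals
$$\big\langle\ev_0 c,\ \phi\otimes\big(Q'(0)-\iota_{\omega_0(g,h)gh}\tau^x\big)\big\rangle+\big\langle[b-B,\Id\otimes\iota_{\partial_t}]\ev_0 c,\ \phi\otimes\tau^x\big\rangle,$$
after transposing $\iota_{\omega_0(g,h)gh}$ onto the cochain.

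The second bracket vanishes exactly as in Theorem~\ref{thm:invar-algebraic}: $\ev_0 c$ is a $(b-B)$-cycle, so $[b-B,\Id\otimes\iota_{\partial_t}]\ev_0 c=(b-B)(\Id\otimes\iota_{\partial_t})\ev_0 c$, and $\phi\otimes\tau^x$ is a $(b-B)$-cocycle. For the first bracket, substitute $Q'(0)=\xi-\tau^x\lhd\omega_0$ and invoke Proposition~\ref{prop:gauss-manin-infin-monodromy-gen}, which reads $-\xi+\iota_{\omega_0(g,h)gh}\tau^x+\tau^x\lhd\omega_0=(b-B)\theta$; hence $Q'(0)-\iota_{\omega_0(g,h)gh}\tau^x=-(b-B)\theta$, up to a $(b-B)$-coboundary coming from the choice of representative of $\tau^x\lhd\omega_0$, which can be absorbed into $\theta$. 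So the first bracket is $-\langle(\Id\otimes(b-B))\ev_0 c,\ \phi\otimes\theta\rangle$, which vanishes because $\phi$ is a $(b-B)$-cocycle and $\ev_0 c$ a $(b-B)$-cycle. This gives $\partial_t\langle\phi^{(t)}\lhd\exp(-t[\omega_0]),\ev_t c\rangle=0$ at $t=0$, hence at every $t$ by the additivity reduction, and the pairing is constant. The structural content is that the single new term $\tau^x\lhd\omega_0$ enters symmetrically — once as the genuine monodromy of Proposition~\ref{prop:gauss-manin-infin-monodromy-gen}, once from differentiating the correction $\exp(-t[\omega_0])$ — and the two occurrences cancel; note this is also where the sign in $\exp(-t[\omega_0])$ is essential.

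The main point to get right is the cochain-level identity $Q'(0)=\xi-\tau^x\lhd\omega_0$ and its compatibility with the particular representative $\tilde\omega_0\circ\Xi$ of $\tau^x\lhd\omega_0$ used in Proposition~\ref{prop:gauss-manin-infin-monodromy-gen}: one must choose the smooth family of cocycle representatives of $\exp(-t[\omega_0])$ so that its first-order part is $\tilde\omega_0\circ\Xi$ — for which it helps that $\tau^x=\delta_{\Ad_G(x)}$ is an idempotent acting as the identity on the $x$-component, so $\tau^x\lhd(\tilde\omega_0\circ\Xi)$ may be taken to be $\tilde\omega_0\circ\Xi$ on the nose — and to verify that the quadratic and higher cup powers, and any residual coboundary ambiguity, do not disturb the first-order part (the latter being harmless in any case, as noted). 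Beyond this the only work is the bookkeeping already present in Theorem~\ref{thm:invar-algebraic}: tracking signs in moving $b-B$ and $\iota$ across the pairing, the direct-product completion in the periodic complex, and making the additivity-in-$t$ reduction rigorous through $(\chi\lhd\zeta)^{(s)}=\chi^{(s)}\lhd\zeta$.
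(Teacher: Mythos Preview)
Your proposal is correct and follows essentially the same approach as the paper: the paper's proof simply states that it is parallel to Theorem~\ref{thm:invar-algebraic}, reduces to checking $\langle \ev_0((\Id \otimes \partial_t) c), \phi \otimes \tau^x \rangle + \langle \ev_0 c, \phi \otimes \xi \rangle - \langle \ev_0 c, \phi \otimes \tau^x \lhd \omega_0 \rangle = 0$, and observes that the extra term matches the one from Proposition~\ref{prop:gauss-manin-infin-monodromy-gen}. Your write-up fills in the derivation of this equation (via $Q'(0)=\xi-\tau^x\lhd\omega_0$) and the bookkeeping around representatives and additivity in more detail than the paper does, but the structure and the key cancellation are identical.
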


\begin{proof}
The proof is parallel to that of Theorem~\ref{thm:invar-algebraic}, but this time we want to check
$$
\langle \ev_0((\Id \otimes \partial_t) c), \phi \otimes \tau^x \rangle + \langle \ev_0 c, \phi \otimes \xi \rangle - \langle \ev_0 c, \phi \otimes \tau^x \lhd \omega_0 \rangle = 0.
$$
The extra term $\tau^x \cup \omega_0$ exactly corresponds to the one in Proposition~\ref{prop:gauss-manin-infin-monodromy-gen}.
\end{proof}

\begin{corollary}
With the same assumption as above, we have
$$
\langle \phi \lhd \exp(t [\omega_0]), \ev_0 c \rangle = \langle \phi^{(t)}, \ev_t c \rangle.
$$
\end{corollary}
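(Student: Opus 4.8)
The plan is to obtain the Corollary by applying the preceding Theorem to the \emph{time-reversed} deformation. Fix $t\in[0,1]$ and set $B:=A_{\omega^t}$; this is again a $G$-graded algebra, and $\phi^{(t)}$ is a cyclic cocycle on $B$ supported on the conjugacy class of $x$. Now apply the Theorem to $B$, with the normalized $\C$-valued $2$-cocycle $-t\omega_0$ on $G$ playing the role of ``$\omega_0$'', and the deformation parameter running over $s\in[0,1]$. Since $(A_{\omega^t})_{e^{-st\omega_0}}=A_{\omega^{t(1-s)}}$, this family interpolates from $A_{\omega^t}$ at $s=0$ to $A$ at $s=1$, and the hypothesis that $\exp(s[-t\omega_0])$ makes sense in $H^*(C_G(x);\C)$ is exactly the standing assumption on $[\omega_0]$.

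Next I identify the endpoint data. The cochain $\xi$ of~\eqref{eq:xi-prec-form} depends linearly on the $2$-cocycle, so the cochain attached to $-t\omega_0$ is $-t\xi$; feeding this into the formula of Proposition~\ref{prop:cocycle-deform-formula-1} and using the additivity of the deformation already invoked in the proof of Theorem~\ref{thm:invar-algebraic} shows that the $s$-deformation of $\phi^{(t)}$ is $\phi^{(t(1-s))}$ --- equal to $\phi^{(t)}$ at $s=0$ and to $\phi^{(0)}=\phi$ at $s=1$. On the homology side, starting from a $(b-B)$-cocycle $c$ in $\bar{C}_*(\Gamma^\infty([0,1];A_{\omega^*}))$, the reparametrization $g\mapsto\bigl(s\mapsto g(t(1-s))\bigr)$ is a unital algebra homomorphism $\Gamma^\infty([0,1];A_{\omega^*})\to\Gamma^\infty([0,1];B_{(e^{-t\omega_0})^*})$ intertwining the fiberwise operators $b$ and $B$; let $c''$ be the image of $c$, so that $\ev_s c''=\ev_{t(1-s)}c$ as periodic cyclic homology classes, i.e.\ $\ev_0 c''=\ev_t c$ and $\ev_1 c''=\ev_0 c$.

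Now the Theorem, applied to $B$, $\phi^{(t)}$, the $2$-cocycle $-t\omega_0$ and the cocycle $c''$, asserts that $\langle\phi^{(t(1-s))}\lhd\exp(st[\omega_0]),\ev_s c''\rangle$ is independent of $s$ (note $-s[-t\omega_0]=st[\omega_0]$). Evaluating at $s=0$ gives $\langle\phi^{(t)},\ev_t c\rangle$ and at $s=1$ gives $\langle\phi\lhd\exp(t[\omega_0]),\ev_0 c\rangle$; equating them is precisely the asserted identity. I expect the only delicate points to be routine bookkeeping: the two choices of $\xi$ entering the additivity step differ at most by a scalar, which contributes the same overall exponential factor at $s=0$ and $s=1$ and hence cancels; and one must check that the interval reparametrization really does commute with $b$ and $B$, which it does since these are defined fiberwise in the $s$-parameter. (Alternatively, one can apply the Theorem to the cocycle $\phi\lhd\exp(t[\omega_0])$ directly and compare parameter values $0$ and $t$; this reduces the Corollary to the identity $(\phi\lhd\eta)^{(t)}=\phi^{(t)}\lhd\eta$ for $\eta\in\HP^*(\C[G])$, which follows from the naturality and graded symmetry of the cup product applied to the embedding~\eqref{eq:twist-coaction}.)
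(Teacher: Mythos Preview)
The paper states this Corollary without proof, treating it as an immediate consequence of the preceding Theorem. Your time-reversal argument is correct: the linearity of $\xi$ in the $2$-cocycle, the identification $(\phi^{(t)})^{[s]}=\phi^{(t(1-s))}$ via Proposition~\ref{prop:cocycle-deform-formula-1}, and the reparametrization homomorphism all work as you claim, and evaluating the constancy statement at $s=0,1$ yields the identity. Your remark about the scalar ambiguity in $\xi$ is harmless but unnecessary, since you have already fixed the choice $-t\xi$ consistently.

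The route the paper presumably has in mind is closer to your parenthetical alternative, and is shorter. Evaluating the Theorem at $0$ and $t$ gives $\langle \phi, \ev_0 c\rangle = \langle \phi^{(t)} \lhd \exp(-t[\omega_0]), \ev_t c\rangle$. Now replace $\phi$ by $\phi \lhd \exp(t[\omega_0])$ and use $(\phi \lhd \eta)^{(t)} = \phi^{(t)} \lhd \eta$ in $\HP^*$; combined with $\exp(t[\omega_0])\lhd\exp(-t[\omega_0])=1$ this gives the Corollary directly. The commutation follows from the description of $\phi^{(t)}$ as the pullback of $\phi\cup\tau^x_{\omega^t}$ along~\eqref{eq:twist-coaction} together with associativity and graded commutativity of the cup product, exactly as you indicate. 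Your main approach trades this algebraic check for a reparametrization of the section algebra; both are fine, but the substitution argument is the one-line reading the paper leaves implicit.
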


\section{Smooth models}

We want to consider Fr\'{e}chet variants of $\Gamma^\infty(I; \C[G])$ in the previous section. When $\cA$ is an $m$-algebra, we may take the projective tensor product $C^\infty(I) \ptimes \cA$, which is again an $m$-algebra (note that tensor product of submultiplicative seminorms are again submultiplicative). It can be identified with the space $C^\infty(I; \cA)$ of $\cA$-valued C$^\infty$-functions on $I$, or the injective tensor product thanks to the nuclearity of $C^\infty(I)$ as a Fr\'{e}chet space, cf.~\citelist{\cite{MR0225131}*{Chapter~44}\cite{MR1082838}*{Theorem~2.3}}.

\subsection{\texorpdfstring{$\ell_1$}{l1}-Schwartz algebra}
\label{sec:ell-1-sch-alg}

Let $\ell\colon G \to [0, \infty)$ be a length function on $G$; a function satisfying $\ell(e) = 0$ and $\ell(g h) \le \ell(g) + \ell(h)$. Let $\cA$ be a Fr\'{e}chet algebra with seminorms $\norm{a}_m$ and let $\alpha\colon G \curvearrowright \cA$ be an action of $G$ which is \emph{$\ell$-tempered}~\cite{MR1217384}:
$$
\forall m \exists C, k, n \colon \norm{\alpha_g(x)}_m \le C (\ell(g) + 1)^k \norm{x}_n.
$$
We then put
\begin{equation}
\label{S-1-G-A-def}
S_1(G; \cA) = \{ f\colon G \to \cA\mid \forall~  k, m\colon \sum_g (\ell(g) + 1)^k \norm{f(g)}_m < \infty \}.
\end{equation}
The seminorms
$$
\norm{f}_{d,m} = \sum_g (\ell(g) + 1)^d \norm{f}_m
$$
topologize $S_1(G; \cA)$, and since $\norm{f}_m$ is increasing in $m$, the seminorms $\norm{f}_m' = \norm{f}_{m,m}$ also topologize $S_1(G; \cA)$, which is an $m$-algebra~\cite{MR1232986}*{Theorem 3.1.7}. As a Fr\'{e}chet space this is just the projective tensor product of $S_1(G) = S_1(G; \C)$ and $\cA$.
Let us denote the twistings of $\ell_1(G)$ (resp.~$S_1(G)$) by $\ell_1(G; \omega)$ (resp.~$S_1(G; \omega)$).
Note that the traces $\tau^x$ are well-defined over $\ell_1(G)$, hence also over $S_1(G)$.

Let $A$ be a $G$-C$^*$-algebra, and $\ell_1(G; A)$ be the crossed product realized on the projective tensor product $\ell_1(G) \ptimes A$. If there are Dirac-dual Dirac morphisms in $\KK^G$ with $\gamma =1 $ in $\KK^{\Ban}_G(\C, \C)$, the assembly map
$$
K^G_*(\ubE G, A) \to K_*(\ell_1(G; A))
$$
is an isomorphism~\cite{MR1914617}. For good $G$ (including Haagerup~\cite{MR1821144} or hyperbolic~\cite{MR2874956}), the assembly maps
$$
K^G_*(\ubE G, A) \to K_*(G \ltimes_r A)
$$
is also an isomorphism, with $G \ltimes_r A$ being the reduced crossed product C$^*$-algebra. It follows that the natural map $K_*(\ell_1(G; A)) \to K_*(G \ltimes_r A)$ is an isomorphism for such groups. In the rest of the section we assume that this is the case.

Let $\omega_0$ be an $\R$-valued $2$-cocycle on $G$, and consider the associated $\T$-valued cocycles $\omega^t(g, h) = \exp(\sqrt{-1} t \omega_0(g, h))$ for $t \in I$. We then obtain a `continuous' family of twisted group C$^*$-algebras $C^*_{r,\omega^t}(G)$, and the C$^*$-algebra of continuous sections $\Gamma(I; C^*_{r,\omega^*}(G))$ plays a central role in the comparison of their K-groups. By definition $\Gamma(I; C^*_{r,\omega^*}(G))$ is a completion of $\ell_1(G; C(I))$ with respect to the representation on Hilbert $C(I)$-module $\Gamma(I; \ell_2(G))$, twisted by the $\cU(C(I))$-valued $2$-cocycle $\omega^*$ on $G$~\cite{MR2608195}.

Recall that $C^*_{r,\omega}(G)$ is strongly Morita equivalent to $G \ltimes_r K(\ell_2(G))$, where $G$ is acting on $K(\ell_2(G))$ by $(\Ad_{\lambda^{(\overline{\omega})}_g})_{g\in G}$~\cite{MR1002543}. Concretely, the imprimitivity bimodule is given by the Hilbert $C^*_{r,\omega}(G)$-module $C^*_{r,\omega}(G) \otimes \ell_2(G)$, where the left action of $G \ltimes_r K(\ell_2(G))$ is given by $\lambda^{(\omega)}_g \otimes \lambda^{(\overline{\omega})}_g$ for $g \in G$ and $1 \otimes T$ for $T \in K(\ell_2(G))$. The $\ell_1$-version of this is a cycle in $\KK^{\Ban}(\ell_1(G; K(\ell_2(G))), \ell_1(G; \omega))$ implementing the isomorphism $K_*(\ell_1(G; K(\ell_2(G)))) \simeq K_*(\ell_1(G; \omega))$. Collecting these, $\ell_1(G; \omega) \to C^*_{r,\omega}(G)$ induces isomorphisms of $K$-groups~\cite{MR2218025}.

\begin{proposition}
\label{prop:ell-1-vers-eval-surj-in-K}
The evaluation map $K_*(\ell_1(G; C(I))) \to K_*(\ell_1(G; \omega^t))$ is surjective
for each $t$.
\end{proposition}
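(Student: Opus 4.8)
The plan is to deduce surjectivity from the fact that, for our class of groups, the evaluation is an \emph{isomorphism} on the $K$-theory of the C$^*$-algebraic models, combined with the $\ell_1$-to-C$^*$ comparison of $K$-theory recalled above. The scheme is to fit everything into a single commuting square.

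First I would write down
\[
\begin{tikzcd}
K_*(\ell_1(G; C(I))) \arrow[r, "\ev_t"] \arrow[d] & K_*(\ell_1(G; \omega^t)) \arrow[d] \\
K_*(\Gamma(I; C^*_{r,\omega^*}(G))) \arrow[r, "\ev_t"] & K_*(C^*_{r,\omega^t}(G)),
\end{tikzcd}
\]
whose vertical arrows come from the canonical maps of the $\ell_1$-completions into the C$^*$-completions and which commutes because the evaluation homomorphisms are compatible with these completions. The bottom arrow is the evaluation on the $K$-theory of the continuous field $t \mapsto C^*_{r,\omega^t}(G)$ of twisted group C$^*$-algebras; under the standing assumption that $G$ satisfies the Baum--Connes conjecture with coefficients this is an isomorphism, as recalled from~\citelist{\cite{MR2608195}\cite{arXiv:1107.2512}} before the statement. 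Hence it is enough to show that the two vertical maps are isomorphisms: a diagram chase then gives that the top map is surjective (in fact bijective).

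The right vertical map $K_*(\ell_1(G; \omega^t)) \to K_*(C^*_{r,\omega^t}(G))$ is an isomorphism by the discussion preceding the proposition, via the Morita equivalence of $C^*_{r,\omega^t}(G)$ with $G \ltimes_r K(\ell_2(G))$ together with the standing isomorphism $K_*(\ell_1(G; A)) \to K_*(G \ltimes_r A)$. For the left vertical map I would run exactly the same argument for the whole family over $I$ at once: by the Packer--Raeburn stabilization trick (now with the $\cU(C(I))$-valued cocycle $\omega^*$), $\Gamma(I; C^*_{r,\omega^*}(G)) \otimes K(\ell_2(G))$ is isomorphic to the untwisted reduced crossed product $G \ltimes_r (C(I) \otimes K(\ell_2(G)))$ for a suitable $G$-action (fibrewise the conjugation action $g \mapsto \Ad_{\lambda^{(\overline{\omega^t})}_g}$), and the analogous identification holds at the $\ell_1$-level for $\ell_1(G; C(I)) \otimes K(\ell_2(G))$; applying the standing assumption to the $G$-C$^*$-algebra $C(I) \otimes K(\ell_2(G))$ and using stability of $K$-theory under tensoring with $K(\ell_2(G))$ then shows that $K_*(\ell_1(G; C(I))) \to K_*(\Gamma(I; C^*_{r,\omega^*}(G)))$ is an isomorphism. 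All of these identifications are natural with respect to $\ev_t$, since evaluation commutes with stabilization and with the Packer--Raeburn isomorphisms; this is also what makes the square above commute.

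The technical heart is the bottom arrow: its bijectivity genuinely uses that $G$ satisfies Baum--Connes with coefficients rather than mere homotopy invariance, because the field $t \mapsto C^*_{r,\omega^t}(G)$ is not locally trivial --- already the noncommutative torus shows this --- so this is where the real input sits and it is imported from~\citelist{\cite{MR2608195}\cite{arXiv:1107.2512}}. The only thing that must be checked by hand is that the stabilization and Morita comparison relating the $\ell_1$- and C$^*$-section algebras over $I$ go through verbatim in the presence of a $\cU(C(I))$-valued cocycle, which is routine. (Note also that for the conclusion it would in fact be enough that the left vertical map is surjective, since the right vertical map is injective; but establishing the isomorphism costs no more.)
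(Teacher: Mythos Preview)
Your overall plan matches the paper's: set up the commuting square with $\ell_1$-algebras on top and $C^*$-algebras below, use that the bottom evaluation and the right vertical map are isomorphisms, and then control the left side. The divergence is in how you handle the left vertical arrow.

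You claim the left vertical map $K_*(\ell_1(G; C(I))) \to K_*(\Gamma(I; C^*_{r,\omega^*}(G)))$ is an isomorphism, by asserting that Packer--Raeburn stabilization and $K$-stability under tensoring with $K(\ell_2(G))$ ``go through verbatim'' at the $\ell_1$-level. This is the gap. The Packer--Raeburn isomorphism on the $C^*$-side is implemented by conjugation with a concrete multiplier unitary, but there is no reason it should restrict to an isomorphism of the $\ell_1$-completions; and once you write $\ell_1(G; C(I)) \otimes K(\ell_2(G))$ you must specify which Banach tensor product you mean and justify both the stability statement and the identification with the untwisted $\ell_1(G; C(I) \otimes K(\ell_2(G)))$, neither of which is automatic for Banach algebras. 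Labelling this ``routine'' skips the actual content.

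The paper does not try to show the left vertical is an isomorphism. Instead it works with Lafforgue's $\KK^{\Ban}$: the Banach bimodule $\ell_1(G; C(I; \ell_2(G)))$ defines a cycle in $\KK^{\Ban}(\ell_1(G; C(I) \otimes K(\ell_2(G))), \ell_1(G; C(I)))$, hence a map $K_*(\ell_1(G; C(I) \otimes K(\ell_2(G)))) \to K_*(\ell_1(G; C(I)))$. Composed with $\ev_t$, this realizes the known isomorphism $K_*(\ell_1(G; C(I) \otimes K(\ell_2(G)))) \to K_*(\ell_1(G; \omega^t))$ (assembled from the standing hypothesis for the coefficient $C(I)\otimes K(\ell_2(G))$, $C^*$-level Packer--Raeburn, the $C^*$-evaluation isomorphism, and the already established right vertical). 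That is a one-way factorization and suffices for surjectivity, while avoiding any Banach-algebra stabilization claim. If you want to repair your route, you would have to produce the $\ell_1$-level Morita comparison rigorously---which amounts to constructing exactly this $\KK^{\Ban}$ cycle---so you would land on the paper's argument anyway.
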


\begin{proof}
We already know the corresponding statement for C$^*$-algebras, and that $K_*(\ell_1(G; \omega^t))$ is isomorphic to $K_*(C^*_{r,\omega^t}(G))$. We also know that $K_*(\ell_1(G; C(I) \otimes K(\ell_2(G)))) \simeq K_(G \ltimes_r C(I) \otimes K(\ell_2(G)))$. Thus, we just need to verify that the evaluation map of the assertion factors the isomorphism $K_*(\ell_1(G; C(I) \otimes K(\ell_2(G)))) \to K_*(\ell_1(G; \omega^t))$ for each $t$.

If we carry out a similar construction for $\ell_1(G; C(I))$ and $\ell_1(G; C(I) \otimes K(\ell_2(G))$, the Banach space $\ell_1(G; C(I; \ell_2(G)))$ gives a cycle in $\KK^{\Ban}(\ell_1(G; C(I) \otimes K(\ell_2(G))), \ell_1(G; C(I)))$, hence a map
$$
K_*(\ell_1(G; C(I) \otimes K(\ell_2(G)))) \to K_*(\ell_1(G; C(I))).
$$
This is the desired factorization.
\end{proof}

Since the natural (semi)norms of $S_1(G; C^\infty(I))$ are stronger than that of $\ell_1(G; C(I))$, we may regard $S_1(G; C^\infty(I))$ as a subspace of $\Gamma(I; C^*_{r,\omega^*}(G))$. In order to control the time derivatives of the product in this algebra, we need to impose the \emph{polynomial growth} assumption on $\omega_0$. Namely, $\omega_0$ is of polynomial growth if
$$
\absv{\omega_0(g, h)} \le C (1 + \ell(g))^M (1 + \ell(h))^M
$$
holds for some large enough $C$ and $M$.

\begin{proposition}[cf.~\cite{MR1217384}*{Theorem~6.7}]
\label{prop:S-1-G-C-infty-I-spec-subalg-ell-1}
If $\omega_0$ is of polynomial growth, $S_1(G; C^\infty(I))$ is a spectral subalgebra of $\ell_1(G; C(I))$.
\end{proposition}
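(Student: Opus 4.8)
The plan is to establish that $S_1(G;C^\infty(I))$ is inverse closed in $\ell_1(G;C(I))$ after adjoining a unit, following the method of~\cite{MR1217384}*{Theorem~6.7}; the one genuinely new ingredient is the $\mathcal{U}(C^\infty(I))$-valued twisting cocycle $\omega^*$, and this is where polynomial growth of $\omega_0$ is used. Recall that $S_1(G;C^\infty(I))$ is an $m$-algebra with increasing submultiplicative seminorms $\|\cdot\|'_m$, and that $\|\cdot\|'_0$ coincides with the norm of $B:=\ell_1(G;C(I))$, the twisting being invisible to the underlying Banach space. Writing $\mathcal{A}_m$ for the Banach-algebra completion of $S_1(G;C^\infty(I))$ in $\|\cdot\|'_m$, one gets a decreasing chain of dense subalgebras $B=\mathcal{A}_0\supseteq\mathcal{A}_1\supseteq\cdots$ with $\bigcap_m\mathcal{A}_m=S_1(G;C^\infty(I))$. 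An element of $S_1(G;C^\infty(I))$ that is invertible in (the unitization of) $B$ has its inverse in $S_1(G;C^\infty(I))$ as soon as that inverse lies in each $\mathcal{A}_m$; hence it suffices to prove that every $\mathcal{A}_m$ is a spectral subalgebra of $B$.

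For the latter, I would invoke the standard criterion that a Fréchet algebra which is a differential subalgebra of a Banach algebra --- in the generalized Blackadar--Cuntz sense, i.e.\ one has an increasing family of seminorms $T_0\le\cdots\le T_N=\|\cdot\|'_m$ with $T_0=\|\cdot\|_B$ and a higher-order Leibniz estimate $\|a\ast b\|'_m\le\sum_{j}c_j\,T_{j}(a)\,T_{N-j}(b)$ --- is automatically spectral. So the task reduces to verifying such a differential estimate for the twisted convolution $\ast$ on $\mathcal{A}_m$. The three inputs are: subadditivity of the length function $\ell$, which distributes the weight $(1+\ell(k))^m$ of a product $k=hl$ over the factors via $(1+\ell(h)+\ell(l))^m=\sum_j\binom{m}{j}(1+\ell(h))^j\ell(l)^{m-j}$; the Leibniz rule for the $C^m(I)$-seminorms; and the control of the $t$-derivatives of the cocycle, $\partial_t^k\omega^t(g,h)=(\sqrt{-1}\,\omega_0(g,h))^k\omega^t(g,h)$, which by the polynomial-growth hypothesis $|\omega_0(g,h)|\le C(1+\ell(g))^M(1+\ell(h))^M$ are bounded by a fixed power of the weight. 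Observe that the cocycle only enters through these derivatives: when no $t$-derivative falls on a factor of $\omega^*$ it disappears, being unitary-valued; so polynomial growth is needed precisely to pass from $C(I)$- to $C^m(I)$-coefficients, and not for the weighting or the twist separately.

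The main obstacle is the bookkeeping in that differential estimate for iterated products: an $n$-fold twisted convolution $a^{\ast n}$ carries a chain $\prod_{i=1}^{n-1}\omega^t(h_1\cdots h_i,h_{i+1})$ of cocycle factors, and a $t$-derivative of order up to $m$ brings down a product of up to $m(n-1)$ factors $|\omega_0(h_1\cdots h_i,h_{i+1})|$; one must check that the polynomial-growth bound lets all of this accumulated weight be reabsorbed into a single fixed power of $(1+\ell(h_1\cdots h_n))$, up to a prefactor that is merely polynomial --- not exponential --- in $n$, so that the resulting Neumann-series estimate still converges in each seminorm. Once this combinatorial point is settled, the remaining arguments are exactly those of~\cite{MR1217384}*{Theorem~6.7} specialized to the trivial action of $G$ on $C^\infty(I)$.
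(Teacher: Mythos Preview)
Your overall strategy is close to the paper's, but there is a genuine gap in your combinatorial expectation. You assert that the prefactor in the $n$-fold product estimate must be ``merely polynomial --- not exponential --- in $n$'' for the Neumann series to converge. In fact the prefactor \emph{is} exponential: expanding $\partial_t^k(f^1 *_{\omega^*} \cdots *_{\omega^*} f^n)$ via the multinomial formula and bounding each of the $n-1$ cocycle factors by $\sum_{b \ge 0} C^b/b! = e^C$ yields
\[
\norm{f^1 \cdots f^n}_m \;\le\; e^{nC}\,\norm{f^1}_{m_+} \cdots \norm{f^n}_{m_+}, \qquad m_+ = m(1+M),
\]
and you will not do better. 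With only this, $\sum_n x^n$ does not converge in $\norm{\cdot}_m$ for a general $x$ with small $\ell_1$-norm, so your Neumann-series route as stated would stall.

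The paper handles this by a two-step argument you do not mention. First, $S_1(G;C(I))$ is spectral in $\ell_1(G;C(I))$ by Schweitzer's Theorem~6.7 verbatim, since the $\T$-valued cocycle is invisible to both norms. Second, for $f \in S_1(G;C^\infty(I))$ invertible in $S_1(G;C(I))$, one approximates $f^{-1}$ by some $f' \in S_1(G;C^\infty(I))$ so that $x = 1 - f'f$ satisfies $\norm{x}_{m_+,0} < C_1 e^{-C}$ with $C_1 < 1$ in the \emph{zeroth-derivative} seminorm. The key refinement is then that at most $m$ of the $n$ factors in $x^n$ can carry a $t$-derivative, so one actually gets
\[
\norm{x^n}_m \;\le\; e^{nC}\,n^m\,\norm{x}_{m_+}^m\,\norm{x}_{m_+,0}^{\,n-m} \;=\; o(C_2^n)
\]
for any $C_1 < C_2 < 1$. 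This separation of a bounded number of ``heavy'' factors from the rest---not a polynomial prefactor---is what makes the exponential constant harmless. Your differential-seminorm formulation would need to encode exactly this (a bound of Schweitzer's strongly-spectral-invariant type, with total index $\le p_m$ independent of $n$), and you have not isolated it.
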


\begin{proof}
First, $S_1(G; C(I))$ is a strongly spectral invariant subalgebra of $\ell_1(G; C(I))$. On this part the argument for~\cite{MR1217384}*{Theorem~6.7} applies without change, since the twisting by $\omega^t$ does not contribute to the norm of these algebras.

We first need to verify that $S_1(G; C^\infty(I))$ is a subalgebra of $S_1(G; C(I))$. Let $f$ and $f'$ be elements of $S_1(G; C^\infty(I))$. Their product is represented by the function
\begin{equation}
\label{eq:section-alg-prod-formula}
(t, g) \mapsto \sum_{h \in G} \omega^t(h, h^{-1} g) f_t(h) f'_t(h^{-1} g).
\end{equation}
Since $\absv{\omega^t(h, h^{-1} g)} = 1$, at each $t$ we indeed obtain an element of $S_1(G; \omega^t)$. We thus need to show that this function is smooth in $t$. Let us denote the constants controlling the growth of $\omega_0$ as above by $C$ and $M$, and put $m_+ = m(1+M)$.

As an illustration let us first give an estimate for the first order derivative. We have
\begin{multline}
\label{eq:prod-first-ord-deriv}
\partial_t (f *_{\omega^*} f')_t(g) \\
= \sum_h \omega^t(h, h^{-1} g) \left ( \omega_0(h, h^{-1} g) f_t(h) f'_t(h^{-1} g) + (\partial_t  f_t(h)) f'_t(h^{-1} g) + f_t(h) \partial_t f'_t(h^{-1} g)\right ).
\end{multline}
The second term can be estimated as $\norm{\partial_t f *_{\omega^t} f'}_{s,1} \le \norm{\partial_t f}_{s,1} \norm{\partial_t f'}_{s,1}$ as in~\eqref{eq:twist-prod-Sob-estim}, so
$$
g \mapsto \sum_h \omega^t(h, h^{-1} g) (\partial_t f_t(h)) f'_t(h^{-1} g)
$$
is an element of $H_\ell^s(G)$. The third term admits a similar estimate. As for the first term, we need to estimate
$$
\sum_g \left( \sum_h \absv{\omega_0(h, h^{-1} g)} \absv{f_t(h)} \absv{f'_t(h^{-1} g)}\right) (1 + \ell(g))^{k}.
$$
If we put $\tilde{f}_t(g) = f_t(g) (1 + \ell(g))^M$, the above is bounded by
\begin{equation}
\label{eq:laff-estim-1-ver}
C \norm{\absv{\tilde{f}} * \absv{\tilde{f}'}}_{k,1} \le C \norm{\absv{\tilde{f}}}_{k,1}  \norm{\absv{\tilde{f}'}}_{k,1} = C \norm{f}_{k_+,1} \norm{f'}_{k_+,1}.
\end{equation}
This shows that $f *_{\omega^*} f'$ indeed has a bounded first order derivative.

Let us now proceed to the higher order derivatives. For $f \in S_1(G; C^m(I))$, put
$$
\norm{f}_m = \sum_{k=0}^m \frac{1}{k!}\max_{s\in I} \norm{\partial_t^k f_s}_{m},
$$
where on the right hand side we take the norm $\norm{f'}_m = \sum_g (1+\ell(g))^m \absv{f'(g)}$ for $f' = \partial_t^k f_s$. Given $f^1, \ldots, f^n \in S_1(G; C^\infty(I))$, we claim that there is an estimate
\begin{equation}
\label{eq:MRZ-type-estim-1-ver}
\norm{f^1 \cdots f^n}_m \le e^{n C} \norm{f^1}_{m_+} \cdots \norm{f^n}_{m_+},
\end{equation}
which implies that $S_1(G; C^\infty(I))$ is closed under product. For $(g_1, \ldots, g_n) \in G^n$, put
$$
\omega^{(n)}_0(g_1, \ldots, g_n) = \omega_0(g_1, g_2) + \omega_0(g_1 g_2, g_3) + \cdots + \omega_0(g_1 \cdots g_{n-1}, g_n).
$$
Then the product $f^1 \cdots f^n$ in $\Gamma(I; C^*_{r,\omega^*}(G))$ is represented by
$$
h_t(g) = \sum_{g_1 \cdots g_n = g} f^1(g_1) \cdots f^n(g_n) e^{t \omega_0^{(n)}(g_1, \ldots, g_n)}.
$$
Then, $\partial_t^k h_s(g)$ is given by
$$
\sum_{a_1 + \cdots + a_{n+1}} \frac{k!}{a_1! \cdots a_{n+1}!} \partial_t^{a_1} f^1_s(g_1) \cdots \partial_t^{a_n} f^n_s(g_n) \omega_0^{(n)}(g_1, \ldots, g_n)^{a_{n+1}} e^{t \omega_0(g_1, \ldots, g_n)}.
$$
Using
\begin{multline*}
\omega_0^{(n)}(g_1, \ldots, g_n)^{a_{n+1}} = \\
\sum_{b_1 + \cdots + b_{n-1}} \frac{a_{n+1}!}{b_1! \cdots b_{n-1}!} \omega_0(g_1, g_2)^{b_1} \omega_0(g_1 g_2, g_3)^{b_2} \cdots \omega_0(g_1 \cdots g_{n-1}, g_n)^{b_{n-1}},
\end{multline*}
we can bound $\frac{1}{k!} \max_s \norm{\partial_t^k h_s}_{m}$ by
\begin{multline*}
\sum_{\substack{a_1 + \cdots + a_n+ b_1  \\ + \cdots + b_{n-1} = k}} \frac{C^{b_1 + \cdots + b_{n-1}}}{a_1! \cdots a_n! b_1! \cdots b_{n-1}!}\\
\times \norm{\left( \cdots\left(\widetilde{\partial_t^{a_1}f^1_s}^{(b_1)} \widetilde{\partial_t^{a_2}f^2_s}^{(b_1)} \right)^{\sim(b_2)} \widetilde{\partial_t^{a_3} f^3}^{(b_2)} \cdots \right)^{\sim(b_{n-1})} \widetilde{\partial_t^{a_n}f^{n}_s}^{(b_{n-1})}}_{m},
\end{multline*}
where we put $\tilde{f}^{(b)}(g) = f(g) (1 + \ell(g))^{M b}$. Repeatedly using the estimates of the form~\eqref{eq:laff-estim-1-ver}, we can bound this by
\begin{multline}
\label{eq:precise-estim-power-sob-norm-1-ver}
\sum_{\substack{a_1 + \cdots + a_n+ b_1  \\ + \cdots + b_{n-1} = k}} \frac{C^{b_1 + \cdots + b_{n-1}}}{a_1! \cdots a_n! b_1! \cdots b_{n-1}!}
\norm{\partial_t^{a_1}f^1_s}_{m+(b_1+\cdots+b_{n-1}) M}\\
\times\norm{\partial_t^{a_2}f^2_s}_{m+(b_1+\cdots+b_{n-1}) M}
\cdots
\norm{\partial_t^{a_n}f^n_s}_{m+b_{n-1} M}
\end{multline}
Then, using $b_1 + \cdots b_{n-1} \le k \le m$ and $\sum_i C^{b_i} / b_i! \le e^C$, we obtain the desired estimate~\eqref{eq:MRZ-type-estim-1-ver}.

It remains to show that $S_1(G; C^\infty(I))$ is closed under taking multiplicative inverses.  Now, suppose that $f \in S_1(G; C^\infty(I))$ is invertible in $S_1(G; C(I))$. The assertion follows if we can show that $f^{-1}$ belongs to $S_1(G; C^m(I))$ for all $m$. If $f \in S_1(G; C^m(I))$ is invertible in $S_1(G; C(I))$, we fix $C_1 < 1$ and approximate $f^{-1}$ by $f' \in S_1(G; C^m(I))$ satisfying
$$
\norm{f' - f^{-1}}_{m_+,0} < \frac{C_1}{e^{3 C} \norm{f}_{m_+,0}},
$$
so that $x = 1 - f' f$ satisfies $\norm{x}_{m_+,0} < C_1 e^{-C}$. Then, for $n > m$, the above estimate for $f^1 = \cdots = f^n = x$ shows
$$
\norm{x^n}_{m,m} \le e^{n C} n^m \norm{x}_{m_+,m}^m \norm{x}_{m_+,0}^{n-m} = o(C_2^n)
$$
for any $C_1 < C_2 < 1$. This shows $f' f$ is invertible in $S_1(G; C^m(I))$.
\end{proof}

\begin{corollary}
The evaluation map $K_*(S_1(G; C^\infty(I))) \to K_*(S_1(G; \omega^t))$ is surjective
for each $t$.
\end{corollary}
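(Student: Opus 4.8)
The plan is to deduce this from Proposition~\ref{prop:ell-1-vers-eval-surj-in-K} by passing between the Fréchet completions and the Banach completions via the density theorem in $K$-theory: a dense spectral subalgebra $B$ of a Banach algebra $A$, equipped with a finer (good) Fréchet topology, satisfies $K_*(B) \simeq K_*(A)$ through the inclusion (see~\cite{MR1232986}, or~\cite{MR1217384} in the form needed here). Proposition~\ref{prop:S-1-G-C-infty-I-spec-subalg-ell-1}, valid under the polynomial growth hypothesis on $\omega_0$, puts us in exactly this situation for the pair $S_1(G; C^\infty(I)) \subseteq \ell_1(G; C(I))$, so the left vertical arrow in the square below is an isomorphism.

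Next I would record the fibrewise analogue at each $t$: $S_1(G; \omega^t)$ is a dense spectral subalgebra of $\ell_1(G; \omega^t)$. Since $\absv{\omega^t} \equiv 1$ on $G \times G$, the underlying Fréchet space $S_1(G; \omega^t)$, the Banach space $\ell_1(G; \omega^t)$, and all the (sub)multiplicative seminorms entering the estimates coincide with those of the untwisted algebras $S_1(G) \subseteq \ell_1(G)$; only the multiplication is altered, and the twist never enters the relevant norm inequalities (this is the same observation already used in the proof of Proposition~\ref{prop:S-1-G-C-infty-I-spec-subalg-ell-1}). Hence Jolissaint's theorem on $\ell$-tempered actions~\cite{MR1217384} applies verbatim, giving spectral invariance and therefore $K_*(S_1(G; \omega^t)) \simeq K_*(\ell_1(G; \omega^t))$. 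The evaluation homomorphisms at $t$ are induced by the fibrewise maps $C^\infty(I) \to \C$ and $C(I) \to \C$ and are compatible with the dense inclusions, so we obtain a commutative square with the two maps $\ev_t$ along the horizontal edges — $K_*(S_1(G; C^\infty(I))) \to K_*(S_1(G; \omega^t))$ on top and $K_*(\ell_1(G; C(I))) \to K_*(\ell_1(G; \omega^t))$ on the bottom — and the two density isomorphisms along the vertical edges. By Proposition~\ref{prop:ell-1-vers-eval-surj-in-K} the bottom horizontal map is surjective, and a diagram chase forces the top horizontal map to be surjective, which is the assertion.

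The only part requiring genuine, if routine, checking is the spectral-invariance statement for the twisted fibre $S_1(G; \omega^t) \subseteq \ell_1(G; \omega^t)$; but, as noted, the unitarity of the cocycle $\omega^t$ reduces it to the untwisted case, so nothing essentially new is needed there. Everything else is formal: naturality of $K$-theory for the evaluation homomorphisms and the density theorem for spectral subalgebras.
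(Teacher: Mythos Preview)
Your proposal is correct and essentially identical to the paper's proof: both assemble the commutative square with the density inclusions $S_1 \hookrightarrow \ell_1$ (isomorphisms on $K$-theory by spectral invariance, using Proposition~\ref{prop:S-1-G-C-infty-I-spec-subalg-ell-1} for the section algebra and the simpler fibrewise analogue for each $t$) and the evaluation maps, then invoke Proposition~\ref{prop:ell-1-vers-eval-surj-in-K} for the $\ell_1$-side surjectivity and chase. The only cosmetic difference is that the paper draws the square with inclusions horizontal and evaluations vertical, whereas you transpose it; also note that the reference~\cite{MR1217384} you cite for the tempered-action spectral invariance is Schweitzer, not Jolissaint.
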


\begin{proof}
A similar (but simpler) argument as the above proposition shows that $S_1(G)$ is a spectral subalgebra of $\ell_1(G)$ with respect to the product $*_{\omega^t}$. We thus know that the horizontal arrows in
$$
\xymatrix{
K_*(S_1(G; C^\infty(I))) \ar[d] \ar[r] & K_*(\ell_1(G; C(I))) \ar[d]\\
K_*(S_1(G; \omega^t)) \ar[r] & K_*(\ell_1(G; \omega^t)) 
}
$$
are isomorphisms. Moreover, the right vertical arrow is surjective by Proposition~\ref{prop:ell-1-vers-eval-surj-in-K}.
\end{proof}

\medskip
\begin{remark}
When $G$ is of polynomial growth, there is possibly a more direct approach based on the following result of Schweitzer~\cite{MR1217384}: suppose that $A$ is a $G$-C$^*$-algebra, and $\cA$ be a dense $G$-invariant Fr\'{e}chet subalgebra satisfying the differential seminorm condition such that the induced action is $\ell$-tempered. Then $S_1(G; \cA)$ is a spectral subalgebra of $G \ltimes_r A$. However, this result is hopeless without polynomial growth assumption. Indeed,  $\ell_1(G)$ is not a spectral subalgebra of $C^*_r(G)$ if $G$ contains a free semigroup of two generators~\citelist{\cite{MR0120529}\cite{MR0256185}}.
\end{remark}

\begin{remark}
The subscript $1$ in $S_1(G; A)$ corresponds to the fact that the definition in~\eqref{S-1-G-A-def} is in terms of the `$\ell_1$ norm'. When $A$ is a pre C$^*$-algebra, the $\ell_2$ version makes sense as
$$
S_2(G; A) = \left\{f\colon G \to A \mid \forall ~  k,m \colon \norm{\sum_g \alpha_g^{-1}(f(g)^* f(g)) (1 + \ell(g))^{2 k} }_m < \infty \right\}.
$$
This way $S_2(G; \C)$ is equal to $H_\ell^\infty(G)$, see the next section. However, the spectral invariance is proved only when $G$ is of polynomial growth and $A$ is a commutative C$^*$-algebra~\cite{MR1394381}, and $S_2(G; \ell_\infty G)$ is a spectral subalgebra of $G \ltimes_r \ell_\infty G$ (if and) only if $G$ is of polynomial growth~\cite{MR1957682}.  Note also that we have $S_1(G) = S_2(G)$ if and only if $G$ is of polynomial growth~\cite{MR943303}. For hyperbolic groups, there is a more elaborate choice of rapid decay functions~\cite{MR2647141} which are invariant under holomorphic functional calculus on $C^*_r(G)$ and at the same time supporting many traces.
\end{remark}

\subsection{\texorpdfstring{$\ell_2$}{l2}-Schwartz algebra}

For $s > 0$, put
$$
H_\ell^s(G) = \biggl\{ f\colon G \to \C \mid \sum_{g \in G} \absv{f(g)}^2 (\ell(g) + 1)^{2 s} < \infty \biggr\},
$$
and $H_\ell^\infty(G) = \bigcap_{s > 0} H_\ell^s(G)$. Recall that $G$ is said to be of \emph{rapid decay}~\cite{MR943303} if $H_\ell^\infty(G)$ is a subspace of $C^*_r(G)$ for some proper length function $\ell$. This is equivalent to $H_\ell^s(G) \subset C^*_r(G)$ for large enough $s$. Note that when $G$ is finitely generated, it is rapid decay if and only if $H_\ell^\infty(G) \subset C^*_r(G)$ for the word length $\ell$ for some symmetric generating set. Put
$$
\norm{f}_{\ell,s} = \sqrt{ \sum_{g \in G} \absv{f(g)}^2 (\ell(g) + 1)^{2 s}},
$$
so that $H_\ell^s(G)$ is the completion of $\C[G]$ by this norm. When $H_\ell^s(G) \subset C^*_r(G)$, there is $C > 0$ such that whenever $f$ is supported on $B_R(e)$ (the ball around $e$ of radius $R$), we have $\norm{f}_{C^*_r(G)} \le C R^s \norm{f}_{\ell,s}$, or that there is a polynomial $P(X)$ such that
$$
\norm{f * g}_{C^*_r(G)} \le P(R) \norm{f}_{\ell_2(G)} \norm{g}_{\ell_2(G)}
$$
whenever $f$ is supported on $B_R(e)$. The last condition implies $\norm{f * g}_{\ell,s} \le K(s) \norm{f}_{\ell,s} \norm{g}_{\ell,s}$ for $s \ge \deg P(X)$ and
$$
K(s) = 2^s \sqrt{P(1)^2 + \sum_n \frac{P(2^n)^2}{(1+2^{n-1})^{2s}}},
$$
so that $H_\ell^s(G)$ is a Banach algebra with respect to the rescaled norm $\norm{f}_{\ell,s}' = K(s) \norm{f}_{\ell,s}$~\cite{MR1774859}*{Proposition~1.2}. Modifying the factor $K(s)$ if needed, we may assume that $\norm{f}_{\ell,s}'$ is increasing in $s$. This way $H_\ell^\infty(G)$ is an $m$-algebra. It is also closed under holomorphic functional calculus inside $C^*_r(G)$.

Let $G$ be a rapid decay group, and $\omega$ be a normalized $\T$-valued $2$-cocycle on $G$. As explained in~\cite{MR2218025}*{Proposition~6.11}, $H_\ell^\infty(G)$ is also a subalgebra of $C^*_{r,\omega}(G)$ thanks to the estimate
\begin{equation}
\label{eq:twist-prod-Sob-estim}
\norm{f *_\omega g}_{\ell,s} \le \norm{\absv{f} * \absv{g}}_{\ell,s} \le K(s) \norm{f}_{\ell,s} \norm{g}_{\ell,s}
\end{equation}
for large enough $s$. Let us denote this algebra by $H_\ell^\infty(G; \omega)$. It is again invariant under holomorphic functional calculus in $C^*_{r,\omega}(G)$.

The smooth section algebra $S_1(G; C^\infty(I))$ needs to be replaced by $C^\infty(I; H_\ell^\infty(G))$. Since $C^\infty(I)$ is a nuclear Fr\'{e}chet space, $C^\infty(I; H_\ell^\infty(G))$ which is naturally the injective tensor product of $C^\infty(I)$ and $H_\ell^\infty(G)$ is also the projective tensor product.

Then, an analogue of the first half of Proposition~\ref{prop:S-1-G-C-infty-I-spec-subalg-ell-1} holds.

\begin{proposition}
\label{prop:poly-gro-cocycle-smooth-subalg}\label{prop:poly-gro-cocycle-m-alg}
Suppose that $\omega_0$ is of polynomial growth. Then $C^\infty(I; H^\infty_\ell(G))$ is an $m$-convex subalgebra of $\Gamma(I; C^*_{r,\omega^*}(G))$.
\end{proposition}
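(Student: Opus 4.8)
The plan is to run, in the $\ell_2$-setting, the argument used for the first half of Proposition~\ref{prop:S-1-G-C-infty-I-spec-subalg-ell-1}, with the Sobolev submultiplicativity estimate~\eqref{eq:twist-prod-Sob-estim} playing the role there played by the $\ell_1$-bound. Three points need checking: that $C^\infty(I; H^\infty_\ell(G))$ sits inside $\Gamma(I; C^*_{r,\omega^*}(G))$ as a subspace, that it is closed under the $\omega^*$-twisted product, and that its Fr\'echet topology is generated by submultiplicative seminorms. For the first point, recall that the rapid decay hypothesis on $G$ together with $\absv{\omega^t(g,h)} = 1$ yields a bound $\norm{f}_{C^*_{r,\omega^t}(G)} \le C'\norm{f}_{\ell,s}$ for large enough $s$, with $C'$ independent of $t$ (the twisting does not affect the rapid-decay polynomial). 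Hence the Sobolev-type seminorms of $C^\infty(I; H^\infty_\ell(G))$ dominate the $C^*$-section norm uniformly in $t$; since the finitely supported sections lie in $\ell_1(G;C(I)) \subset \Gamma(I; C^*_{r,\omega^*}(G))$ and their Sobolev tails vanish, every element of $C^\infty(I; H^\infty_\ell(G))$ is approximated in $\Gamma(I; C^*_{r,\omega^*}(G))$ by such sections, so the inclusion holds.

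For the product and the $m$-convexity I would repeat the combinatorial estimate from the proof of Proposition~\ref{prop:S-1-G-C-infty-I-spec-subalg-ell-1}. For $f^1, \dots, f^n \in C^\infty(I; H^\infty_\ell(G))$ the product section is represented by
\[
h_t(g) = \sum_{g_1 \cdots g_n = g} f^1_t(g_1)\cdots f^n_t(g_n)\, e^{\sqrt{-1}\,t\,\omega_0^{(n)}(g_1,\dots,g_n)},
\qquad
\omega_0^{(n)}(g_1,\dots,g_n) = \sum_{j=1}^{n-1}\omega_0(g_1\cdots g_j,\, g_{j+1}),
\]
and applying $\partial_t^k$ produces, via the Leibniz rule, a sum over $a_1 + \dots + a_{n+1} = k$ of terms $\partial_t^{a_1} f^1_s(g_1)\cdots \partial_t^{a_n} f^n_s(g_n)\,\omega_0^{(n)}(g_1,\dots,g_n)^{a_{n+1}}$ against a unimodular factor. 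Expanding $\omega_0^{(n)}(\cdots)^{a_{n+1}}$ by the multinomial theorem and using the polynomial growth bound $\absv{\omega_0(a,b)} \le C(1+\ell(a))^M(1+\ell(b))^M$, each $\omega_0$-factor is absorbed into a modified function $\tilde f^{(b)}(g) = f(g)(1+\ell(g))^{Mb}$ with $\norm{\tilde f^{(b)}}_{\ell,s} = \norm{f}_{\ell,s+Mb}$. Iterating~\eqref{eq:twist-prod-Sob-estim} on the modified functions and summing — using $\sum_b C^b/b! \le e^C$ together with $b_1 + \dots + b_{n-1} \le k \le m$ to collapse all Sobolev-degree shifts into one uniform shift $m_+ = m(1+M)$ — produces, with $\norm{f}_m := \sum_{k\le m}\tfrac1{k!}\max_{s\in I}\norm{\partial_t^k f_s}_{\ell,m}$, an estimate
\[
\norm{f^1\cdots f^n}_m \;\le\; e^{nC}\,K(m_+)^{\,n-1}\,\norm{f^1}_{m_+}\cdots\norm{f^n}_{m_+}
\]
parallel to~\eqref{eq:MRZ-type-estim-1-ver}. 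Taking $n$ factors shows that $h$ again lies in $C^\infty(I; H^\infty_\ell(G))$ (the differentiated series converging absolutely and uniformly), so this space is a subalgebra of $\Gamma(I; C^*_{r,\omega^*}(G))$; the case $n = 2$ is a differential-seminorm bound of the type handled by~\cite{MR1232986}*{Theorem~3.1.7}, used already for $S_1(G;C^\infty(I))$ and for $H^\infty_\ell(G)$, which supplies an equivalent family of submultiplicative seminorms and hence the $m$-convexity.

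The only genuine obstacle is the bookkeeping in the iterated estimate: one must verify that each of the $n-1$ multiplications raises the Sobolev degree only by $M$ times the relevant exponents $b_i$, that all partial sums $b_1 + \dots + b_j$ are bounded by $k \le m$, and therefore that the single shift $m \rightsquigarrow m_+$ dominates every Sobolev degree occurring on the right-hand side, leaving the $n$-dependence confined to the harmless factor $e^{nC}K(m_+)^{n-1}$. Everything else is routine: joint continuity of the twisted product comes from the $n=2$ estimate, and the completeness of $C^\infty(I; H^\infty_\ell(G))$ together with the identification of its topology with that of $C^\infty(I)\ptimes H^\infty_\ell(G)$ uses only the nuclearity of $C^\infty(I)$ recalled just before the proposition.
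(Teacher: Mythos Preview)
Your overall strategy matches the paper's: derive an $n$-fold product estimate of the form $\norm{f^1\cdots f^n}_m \le (\text{const})^n \prod_i \norm{f^i}_{m_+}$ with a \emph{single} Sobolev shift $m \to m_+$ independent of $n$, and from this deduce both closure under products and $m$-convexity. The combinatorics of the Leibniz/multinomial expansion and the absorption of $\omega_0$-factors into weight-shifted functions $\tilde f^{(b)}$ are essentially identical to what the paper does (the paper writes $m_+ = m(1+\tfrac{M}{2})$, a harmless bookkeeping difference).

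The one genuine gap is your final step, where you extract $m$-convexity by invoking~\cite{MR1232986}*{Theorem~3.1.7} on the $n=2$ bound. This does not go through. Schweitzer's theorem is a statement about the specific algebras $S_1(G;\cA)$ with untwisted convolution, not a general ``differential seminorm $\Rightarrow$ $m$-convex'' criterion; in the paper it is cited only for that untwisted case, and $H^\infty_\ell(G)$ is $m$-convex simply because the rescaled norms $\norm{\cdot}_{\ell,s}'$ are themselves submultiplicative. More to the point, your $n=2$ estimate $\norm{fg}_m \le C\,\norm{f}_{m_+}\norm{g}_{m_+}$ carries an index shift, so iterating it na\"{\i}vely forces the shift to grow with the number of factors and yields no submultiplicative family. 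What rescues the situation is precisely the $n$-fold estimate you already established, with its single shift uniform in $n$, and the paper converts this into $m$-convexity via the Mitiagin--Rolewicz--\.Zelazko device~\cite{MR0144222}*{Lemma~1.2}: the estimate gives $B_{m_+}\bigl((K(m_+)e^C)^{-1}\bigr)^n \subset B_m(1)$ for every $n$, so the convex hull $U_m$ of all these powers is an idempotent neighbourhood of $0$, and $\{\tfrac{1}{m'}U_m\}_{m,m'}$ is a fundamental system of idempotent neighbourhoods. Replacing your Schweitzer reference by this convex-hull argument closes the gap.
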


\begin{proof}
Let $f$ and $f'$ be elements of $C^\infty(I; H^\infty_\ell(G))$. Their product in $\Gamma(I; C^*_{r,\omega^*}(G))$ is represented by \eqref{eq:section-alg-prod-formula}, and by the estimate \eqref{eq:twist-prod-Sob-estim}, at each $t$ we indeed obtain an element of $H^\infty_\ell(G; \omega^t)$. We thus need to show that this function is smooth in $t$.

The proof is completely analogous to the first half of Proposition~\ref{prop:S-1-G-C-infty-I-spec-subalg-ell-1}, with slight modifications using~\eqref{eq:twist-prod-Sob-estim}. For example, in~\eqref{eq:prod-first-ord-deriv} the second term can be estimated as
$$
\norm{\partial_t f *_{\omega^t} f'}_{\ell,s} \le K(s) \norm{\partial_t f}_{\ell,s} \norm{\partial_t f'}_{\ell,s}
$$
as in~\eqref{eq:twist-prod-Sob-estim}, and the third term admits a similar estimate. As for the first term, we need to estimate
$$
\sum_g \left( \sum_h \absv{\omega_0(h, h^{-1} g)} \absv{f_t(h)} \absv{f'_t(h^{-1} g)}\right)^2 (1 + \ell(g))^{2 s}.
$$
If we put $\tilde{f}_t(g) = f_t(g) (1 + \ell(g))^M$, the above is bounded by
\begin{equation}
\label{eq:laff-estim}
C \norm{\absv{\tilde{f}} * \absv{\tilde{f}'}}_{\ell,s}^2 \le C K(s)^2 \norm{\absv{\tilde{f}}}_{\ell,s}  \norm{\absv{\tilde{f}'}}_{\ell,s} = C K(s)^2 \norm{f}_{\ell,s+\frac{M}{2}}^2 \norm{f'}_{\ell,s+\frac{M}{2}}^2.
\end{equation}
This shows that $f *_{\omega^*} f'$ indeed has a bounded first order derivative.

Let us now proceed to the higher order derivatives. For $f \in C^\infty(I; H^\infty_\ell(G))$, again put
$$
\norm{f}_m = \sum_{k=0}^m \frac{1}{k!}\max_{s\in I} \norm{\partial_t^k f_s}_{\ell,m}.
$$
Let $C$ and $M$ be the constants controlling the growth of $\omega_0$. When $f^1, \ldots, f^n$ in $C^\infty(I; H^\infty_\ell(G))$, instead of \eqref{eq:MRZ-type-estim-1-ver} we claim
\begin{equation}
\label{eq:MRZ-type-estim}
\norm{f^1 \cdots f^n}_m \le \left(K(m_+) e^{C}\right)^n \norm{f^1}_{m_+} \cdots \norm{f^n}_{m_+}
\end{equation}
with $m_+ = m (1 + \frac{M}{2})$ this time. For $(g_1, \ldots, g_n) \in G^n$, put
$$
\omega^{(n)}_0(g_1, \ldots, g_n) = \omega_0(g_1, g_2) + \omega_0(g_1 g_2, g_3) + \cdots + \omega_0(g_1 \cdots g_{n-1}, g_n).
$$
Then, with the product $h = f^1 \cdots f^n$ in $\Gamma(I; C^*_{r,\omega^*}(G))$, we can bound $\frac{1}{k!} \max_s \norm{\partial_t^k h_s}_{\ell,m}$ by
\begin{multline}
\label{eq:precise-estim-power-sob-norm}
\sum_{\substack{a_1 + \cdots + a_n+ b_1  \\ + \cdots + b_{n-1} = k}} \frac{K(m+(b_1+\cdots+b_{n-1}) \frac{M}{2})^n C^{b_1 + \cdots + b_{n-1}}}{a_1! \cdots a_n! b_1! \cdots b_{n-1}!}
\norm{\partial_t^{a_1}f^1_s}_{\ell,m+(b_1+\cdots+b_{n-1}) \frac{M}{2}}\\
\times\norm{\partial_t^{a_2}f^2_s}_{\ell,m+(b_1+\cdots+b_{n-1}) \frac{M}{2}}
\cdots
\norm{\partial_t^{a_n}f^n_s}_{\ell,m+b_{n-1} \frac{M}{2}}
\end{multline}
repeatedly using the estimates of the form~\eqref{eq:laff-estim}. Then, using $b_1 + \cdots b_{n-1} \le k \le m$ and $\sum_i C^{b_i} / b_i! \le e^C$, we obtain the desired estimate~\eqref{eq:MRZ-type-estim}.

The rest proceeds as in the proof of~\cite{MR0144222}*{Lemma~1.2}. Namely, having the $m$-convexity is equivalent to finding a system of convex neighborhoods of $0$  which are idempotent $U \cdot U \subset U$. Putting $B_m(r) = \{f \colon \norm{f}_m < r\} \subset C^\infty(I; H^\infty_\ell(G))$, our claim above implies
$$
B_{m(1+\frac{M}{2})}\left(K(m_+)^{-1} e^{-C}\right)^n \subset B_m(1)
$$
for any $n$. Thus, the convex hull $U_m$ of the $B_{m_+}(K(m_+)^{-1} e^{-C})^n$ for $n = 1, 2, \ldots$ is a neighborhood of $0$ satisfying $U_m^2 = U_m$. Then $\frac{1}{m'} U_m$ for $m, m' = 1, 2, \ldots$ is a fundamental system of idempotent neighborhoods.
\end{proof}

\begin{proposition}
\label{prop:spec-invar-smooth-section-alg}
Under the same assumption as Proposition~\ref{prop:poly-gro-cocycle-smooth-subalg}, $C^\infty(I; H_\ell^\infty(G))$ is a spectral subalgebra of $\Gamma(I; C^*_{r,\omega^*}(G))$.
\end{proposition}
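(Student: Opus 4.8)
The plan is to show that $C^\infty(I;H_\ell^\infty(G))$ is closed under inversion inside the C$^*$-algebra $\Gamma(I;C^*_{r,\omega^*}(G))$. Since $C^\infty(I;H_\ell^\infty(G))$ is a unital dense subalgebra (it contains $C^\infty(I)\odot\C[G]$) which is $m$-convex by Proposition~\ref{prop:poly-gro-cocycle-m-alg}, closure under inversion is equivalent to being a spectral subalgebra and even promotes to closedness under holomorphic functional calculus. So fix $f\in C^\infty(I;H_\ell^\infty(G))$ invertible in $\Gamma(I;C^*_{r,\omega^*}(G))$, with inverse $u$; the goal is $u\in C^\infty(I;H_\ell^\infty(G))$. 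I would argue in two stages mirroring the two halves of the proof of Proposition~\ref{prop:S-1-G-C-infty-I-spec-subalg-ell-1}: first establish the ``spatial'' regularity $u\in C(I;H_\ell^\infty(G))$, i.e. that $u$ is a continuous section with values in every Sobolev algebra $H_\ell^s(G)$ (without yet controlling its $t$-derivatives); then upgrade this to smoothness in $t$ by a Neumann-series argument built on the estimate~\eqref{eq:MRZ-type-estim}.

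For the first stage, at each fixed $t$ the fiber $f_t$ lies in $H_\ell^\infty(G;\omega^t)$ and is invertible in $C^*_{r,\omega^t}(G)$, so the holomorphic closedness of $H_\ell^\infty(G;\omega^t)$ in $C^*_{r,\omega^t}(G)$ recalled above gives $u_t=f_t^{-1}\in H_\ell^\infty(G)$. What must be added is a bound and a continuity statement uniform over the compact interval $I$: for each large $s$, $\sup_{t\in I}\norm{u_t}_{\ell,s}<\infty$ and $t\mapsto u_t$ is continuous into $H_\ell^s(G)$. I would obtain this from the functional-calculus estimates that accompany spectral invariance for a dense, submultiplicatively normed subalgebra: writing $u_t=\tfrac{1}{2\pi\sqrt{-1}}\oint_\gamma z^{-1}(z-f_t)^{-1}\,dz$ over a single annular contour $\gamma$ encircling all the spectra $\mathrm{sp}(f_t)$, $t\in I$ at once --- possible since $\sup_t\norm{u_t}_{C^*_{r,\omega^t}(G)}<\infty$ and $\sup_t\norm{f_t}_{C^*_{r,\omega^t}(G)}<\infty$ bound these spectra away from $0$ and $\infty$ --- one estimates $\norm{(z-f_t)^{-1}}_{\ell,s}$ by a fixed polynomial in $\norm{(z-f_t)^{-1}}_{C^*_{r,\omega^t}(G)}$ and $\norm{f_t}_{\ell,s'}$ for a slightly larger index $s'$. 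Here all the constants in play --- the submultiplicativity constants $K(s)$ of~\eqref{eq:twist-prod-Sob-estim} and the polynomial-growth constants $C,M$ of $\omega_0$ entering~\eqref{eq:laff-estim} --- are independent of $t$ because $\absv{\omega^t}\equiv 1$, and $f\in C^\infty(I;H_\ell^\infty(G))$ has $\sup_t\norm{f_t}_{\ell,s'}<\infty$; this gives the uniform bound, and the same contour formula together with continuity of this functional calculus in $f_t$ (with $t$-uniform constants) gives continuity of $t\mapsto u_t$ into $H_\ell^s(G)$. Hence $u\in C(I;H_\ell^\infty(G))$.

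With $u$ now a continuous section into $H_\ell^\infty(G)$, the second stage transcribes the last paragraph of the proof of Proposition~\ref{prop:S-1-G-C-infty-I-spec-subalg-ell-1}. Fix $m$, and using that $C^\infty(I;H_\ell^\infty(G))$ is dense in $C(I;H_\ell^{m_+}(G))$ for the norm $\sup_t\norm{\cdot}_{\ell,m_+}$, pick $g\in C^\infty(I;H_\ell^\infty(G))$ with $x=1-gf$ so small in that norm --- possible by~\eqref{eq:twist-prod-Sob-estim} together with the first stage --- that $\norm{x^n}_m=o(\rho^n)$ for some $\rho<1$: the higher $t$-derivatives of $x$ need not be small, but they are finite, and by the refined form~\eqref{eq:precise-estim-power-sob-norm} of~\eqref{eq:MRZ-type-estim} at most $m$ of the $n$ factors in the Leibniz expansion of $\partial_t^k(x^n)$ carry a $t$-derivative, so the small ``no-derivative'' norm dominates. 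Then $\sum_{n\ge 0}x^n$ converges in $C^\infty(I;H_\ell^\infty(G))$ to $(gf)^{-1}$, and $u=(gf)^{-1}g\in C^\infty(I;H_\ell^\infty(G))$. The main obstacle is the first stage: turning the \emph{fiberwise} holomorphic closedness into bounds on $\norm{u_t}_{\ell,s}$ and a continuity statement that are \emph{uniform over $t\in I$} --- which is precisely where the polynomial-growth hypothesis on $\omega_0$ and the unimodularity of the cocycles $\omega^t$ are used.
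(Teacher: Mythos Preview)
Your two-stage plan matches the paper's, and Stage~2 is essentially identical to what the paper does: approximate $f^{-1}$ in the norm $\sup_t\norm{\cdot}_{\ell,m_+}$ by a smooth element $f'$, set $x=1-f'f$, and use the refined estimate~\eqref{eq:precise-estim-power-sob-norm} to get $\norm{x^n}_m = o(C_2^n)$, so the Neumann series converges in $C^m(I;H_\ell^m(G))$.

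The difference is in Stage~1. You argue fiberwise --- invoke holomorphic closedness of $H_\ell^\infty(G;\omega^t)\subset C^*_{r,\omega^t}(G)$ at each $t$, then assemble the fibers via a single contour integral, relying on a claimed polynomial bound for $\norm{(z-f_t)^{-1}}_{\ell,s}$ in terms of $\norm{(z-f_t)^{-1}}_{C^*}$ and $\norm{f_t}_{\ell,s'}$ with $t$-independent constants. The paper instead avoids the fiberwise-to-global passage altogether: it observes that the very argument proving spectral invariance of $H_\ell^\infty(G;\omega^t)$ in $C^*_{r,\omega^t}(G)$ (acting on $\ell_2(G)$) carries over verbatim to $C(I;H_\ell^\infty(G))$ in $\Gamma(I;C^*_{r,\omega^*}(G))$ once one replaces $\ell_2(G)$ by the Hilbert $C(I)$-module $C(I;\ell_2(G))$; alternatively one may simply cite Paravicini's general result on spectral radii in $\mathcal{C}_0(X)$-Banach algebras. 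This is shorter and sidesteps the need to justify your uniform resolvent estimate, which is the one step in your sketch that is plausible (the constants $K(s)$, $C$, $M$ really are $t$-independent because $\absv{\omega^t}\equiv 1$) but not quite immediate --- you would have to extract it from a strong-spectral-invariance type inequality for $H_\ell^s\subset C^*_r$, not merely from spectral invariance as a qualitative statement. Your route has the advantage of being self-contained and making the role of the $t$-independence of constants explicit; the paper's route is cleaner because it never disassembles the continuous field.
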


\begin{proof}
First we claim that $C(I, H_\ell^\infty(G))$, or equivalently $C(I, H_\ell^m(G))$ when $m$ is sufficiently large, is a spectral subalgebra of $\Gamma(I; C^*_{r,\omega^*}(G))$. The proof for each fiber~\cite{MR2218025}*{Proposition~6.11} carries on to this case, by replacing the norm of $\ell_2(G)$ by that of the Hilbert $\Gamma(I; C^*_{r,\omega^*}(G))$-module $C(I; \ell_2(G))$. (Otherwise we can appeal to~\cite{MR3032813}.)

The rest of proof is analogous to the second half of Proposition~\ref{prop:S-1-G-C-infty-I-spec-subalg-ell-1}. Suppose that $f \in C^\infty(I; H_\ell^\infty(G))$ is invertible in $C(I, H_\ell^\infty(G))$. The assertion follows if we can show that $f^{-1}$ belongs to $C^m(I; H_\ell^m(G))$ when $m$ is sufficiently large. Let us take an element $f' \in C^m(I, H_\ell^m(G))$ satisfying
$$
\norm{f^{-1} - f'}_{C(I, H_\ell^{m_+}(G))} < C_1 \left(K\left(m_+ \right) e^C \norm{f}_{C(I, H_\ell^{m_+}(G))} \right)^{-1}
$$
for some $0 < C_1 < 1$. Thus, we have $\norm{x}_{C(I, H_\ell^{m_+}(G))} < C_1 K\left(m_+\right)^{-1} e^{-C}$ for $x = 1 - f' f$. Using this we are going to show that $\norm{x^n}_m = o(C_2^n)$ for any $C_1 < C_2 < 1$, which implies that $(1-x)^{-1}$, hence $f^{-1} = (1-x)^{-1} f'$ also, is in $C^m(I, H_\ell^m(G))$.

Replacing $f^1, \ldots, f^n$ in the proof of Proposition~\ref{prop:poly-gro-cocycle-m-alg} by $x$, from~\eqref{eq:precise-estim-power-sob-norm} we can bound $\frac{1}{k!} \displaystyle\max_{s\in I}\norm{\partial_t^k x^n_s}_{\ell,m}$ from above by
$$
\sum_{\substack{a_1 + \cdots + a_n \le k}} \frac{K(m_+)^n e^{nC}}{a_1! \cdots a_n!} \norm{\partial_t^{a_1} x_s}_{\ell,m_+} \cdots \norm{\partial_t^{a_n} x_s}_{\ell,m_+}.
$$
Now, let us suppose $n > m$ so that the $a_i$ start to contain $0$. f we fix the possibility of the possible values of $a_1, \ldots, a_n$ up to permutation, the multiplicity in the above  sum is bounded by $n (n-1) \cdots (n-k+1) \le n^m$. Hence we have
$$
\norm{x^n}_m \le K(m_+)^n e^{nC} n^m \norm{x}_{m_+}^m \norm{x}_{C(I, H_\ell^{m_+}(G))}^{n-m}.
$$
By assumption on $\norm{x}_{C(I, H_\ell^{m_+}(G))}$, we indeed have that this is asymptotically bounded by $C_2^n$ as $n \to \infty$.
\end{proof}

\subsection{Effective conjugacy problem}

We now want to use the formula of Proposition~\ref{prop:cocycle-deform-formula-1}. Let $\phi$ be a cyclic cocycle on $A = S_1(G)$ or $H_\ell^\infty(G)$ supported on the conjugacy class of $x$. As follows from the precise form of $\xi$ given in~\eqref{eq:xi-prec-form}, $\phi^{(t)}$ indeed extends to a cyclic cocycle on $A_{\omega^t} = S_1(G; \omega^t)$ or $H_\ell^\infty(G; \omega^t)$ for each $t$.

If the growth of $\xi(g_0 \cdots g_n)$ is bounded by some polynomial in $\ell(g_0), \ldots, \ell(g_n)$, the family $(\phi^{(t)})_t$ is smooth in $t$ and $\theta$ in Section~\ref{alg-gauss-manin} makes sense as a functional on $A \ptimes A$. Since the growth $\omega_0(g, h)$ is bounded by some polynomial in $\ell(g)$ and $\ell(h)$, it is enough to know that $\ell(g^{g_0})$ is bounded by some polynomial in $\ell(g_0)$ when $g_0$ is conjugate to $x$. Note also that if $C_G(x)$ is finite, the growth rate does not depend on the choice of $g^{g_0}$.

Such problem (usually without finiteness assumption on the order of $x$) is known as the \emph{effective conjugacy problem}: consider the conjugacy length function
$$
\CLF_G(n) = \max_{\substack{\text{$x$, $y$ conjugate} \\\ell(x) + \ell(y) = n}} \min_{g x g^{-1} = y} \ell(g).
$$
This is known to be at most linear for the hyperbolic groups~\cite{MR1018749}, for the solvable groups of the form $\Z^k \rtimes_A \Z$ with $A \in \SL_k(\Z)$ contained in a split torus~\cite{MR3449958}, or for the torsion free $2$-step nilpotent groups~\cite{MR2575390}, as well as the crystallographic groups (see the next section). A quadratic bound is known for fundamental groups of prime $3$-manifolds~\cite{MR3421592}, see also~\cite{MR3449958}*{Theorem~5.5}.

Recall that, in the first half of the next theorem, $G$ being `good' means that it satisfies the Baum--Connes conjecture with coefficients and that the natural map $K_*(\ell_1(G; A)) \to K_*(G \ltimes_r A)$ is isomorphism for any $G$-C$^*$-algebra $A$, which are satisfied by the Haagerup groups and the hyperbolic groups.

\begin{theorem}
\label{thm:paring-gen-smooth}
Let $G$ be a good group with polynomially
bounded conjugacy length function, and $\omega_0$ be an $\R$-valued $2$-cocycle of polynomial growth. Furthermore, let $x$ be a torsion element of $G$ such that $e^{t [\omega_0]}$ makes sense in $H^*(C_G(x); \C)$. Then for any cyclic cocycle $\phi$ on $S_1(G)$ supported on the conjugacy class of $x$ and periodic cyclic cycle $c \in \HP_*(S_1(G; C^\infty(I)))$, the pairing
$$
\langle \ev_t c, \phi^{(t)} \cup e^{t [\omega_0]} \rangle
$$
is constant in $t$.

An analogous statement holds for cyclic cocycles on $H_\ell^\infty(G)$ when $G$ is of rapid decay.
\end{theorem}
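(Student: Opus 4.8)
The plan is to reduce the statement to the purely algebraic monodromy computation of Section~\ref{alg-gauss-manin}; the new content is to check that every cochain and homotopy operator appearing there extends to a continuous functional, resp. a bounded operator, on the \emph{completed} reduced cyclic complex $\bar{C}_*(\cA)$ (with $\ptimes$ in place of $\otimes$) of the relevant smooth section algebra. Write $\cA$ for $S_1(G; C^\infty(I))$ in the first case and for $C^\infty(I; H_\ell^\infty(G))$ in the second; these are $m$-algebras by Propositions~\ref{prop:S-1-G-C-infty-I-spec-subalg-ell-1} and~\ref{prop:poly-gro-cocycle-smooth-subalg}, with fibrewise products $\mu_t$ given by~\eqref{eq:section-alg-prod-formula}. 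Since $t\mapsto\mu_t$ and the induced map $\phi\mapsto\phi^{(t)}$ are additive in $t$ and the cup product by $e^{t[\omega_0]}$ is multiplicative in $t$, it is enough to show that $\partial_t\langle\ev_t c,\phi^{(t)}\cup e^{t[\omega_0]}\rangle$ vanishes at $t=0$, where $c$ is represented by a $(b-B)$-cycle in $\bar{C}_*(\cA)$.

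First I would verify that the two growth hypotheses supply exactly the continuity that is needed. Polynomial growth of $\omega_0$ yields, by the multilinear norm estimates already carried out in the proofs of Propositions~\ref{prop:S-1-G-C-infty-I-spec-subalg-ell-1} and~\ref{prop:poly-gro-cocycle-smooth-subalg} (using~\eqref{eq:twist-prod-Sob-estim} in the $H_\ell^\infty$ case), that $(\mu_t)_{t\in I}$ is a smooth family of $m$-algebra products, that $\partial_t$ is a continuous Hochschild $1$-cochain with $\delta(\partial_t)=\dot\mu_t$ equal up to sign to the $2$-cochain $\omega_0(g,h)gh$, and hence that $\iota_{\partial_t}$, $L_{\partial_t}$ and $\iota_{\omega_0(g,h)gh}$ act boundedly on $\bar{C}_*(\cA)$; it also makes $\omega_0\circ\Xi$ and the cup product action of $e^{t[\omega_0]}$ continuous. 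Polynomial boundedness of the conjugacy length function is the ``effective conjugacy'' input: since $x$ is torsion, $\ell(x)$ is a fixed constant and one may choose the coset representatives $g^{g_0}$ with $\ell(g^{g_0})$ bounded by a polynomial in $\ell(g_0)$ uniformly for $g_0\in\Ad_G(x)$, so that $\xi$ from~\eqref{eq:xi-prec-form} and $\theta$ from~\eqref{eq:theta-def} are of polynomial growth. Consequently $\phi^{(t)}$, computed by the formula of Proposition~\ref{prop:cocycle-deform-formula-1}, extends to a cyclic cocycle on each fibre $S_1(G;\omega^t)$ (resp.\ $H_\ell^\infty(G;\omega^t)$), the family $(\phi^{(t)})_t$ is smooth in $t$ — differentiating brings down only factors polynomial in the $\ell(g_i)$, which are absorbed by the seminorms of the chains — and $\theta$ defines a continuous functional on $\cA\ptimes\cA$; finally $e^{t[\omega_0]}$ is meaningful in $H^*(C_G(x);\C)$ by hypothesis, so $\phi^{(t)}\cup e^{t[\omega_0]}$ is well defined.

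Granting this, the rest is the computation in the proof of Theorem~\ref{thm:invar-algebraic} and its general-case analogue. The Cartan homotopy formula for $\partial_t$ gives $(\Id\otimes\partial_t)c=-(\Id\otimes\iota_{\omega_0(g,h)gh})c+[b-B,\Id\otimes\iota_{\partial_t}]c$; inserting the monodromy identity $-\xi+\iota_{\omega_0(g,h)gh}+\tau^x\lhd\omega_0=(b-B)\theta$ of Proposition~\ref{prop:gauss-manin-infin-monodromy-gen} — which holds on $\bar{C}_*(\cA)$ because it holds on the dense algebraic chains $\C[G^\bullet]$ by Lemmas~\ref{lem:pullback-theta-by-capital-B} and~\ref{lem:pullback-theta-by-small-b-gen} and both sides are continuous — the three contributions to $\partial_t\langle\ev_t c,\phi^{(t)}\cup e^{t[\omega_0]}\rangle|_{t=0}$ collapse to $\langle(b-B)\ev_0 c,\phi\otimes\theta\rangle$, which is zero since $c$ is a $(b-B)$-cycle. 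Hence the pairing is constant in $t$. The statement for $H_\ell^\infty(G)$ over a rapid decay $G$ is obtained by the same argument, with the $\ell_1$-type norms and the estimates~\eqref{eq:laff-estim-1-ver} replaced throughout by the Sobolev norms $\norm{\cdot}_{\ell,s}$ and~\eqref{eq:twist-prod-Sob-estim}.

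The main obstacle is the second paragraph: establishing boundedness of $\xi$, $\theta$, $\iota_{\omega_0(g,h)gh}$, the Cartan homotopy and the cup product by $e^{t[\omega_0]}$ on the completed cyclic complex, together with joint smoothness in $t$. This is exactly where the two hypotheses are consumed — polynomial growth of $\omega_0$ feeds the multilinear estimates that were already the technical heart of Propositions~\ref{prop:S-1-G-C-infty-I-spec-subalg-ell-1} and~\ref{prop:poly-gro-cocycle-smooth-subalg}, while the polynomial conjugacy length bound provides a polynomially controlled choice of the conjugators $g^{g_0}$. Once continuity is in place, the algebraic identities pass to the completion by density and the argument runs mechanically; checking that nothing blows up under $t$-differentiation is routine but is where care is required.
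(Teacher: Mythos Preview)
Your proposal is correct and follows essentially the same approach as the paper: the paper does not give a formal proof of this theorem but instead explains in the paragraphs preceding it that the polynomial growth of $\omega_0$ together with a polynomial bound on $\ell(g^{g_0})$ (supplied by the conjugacy length hypothesis) make $\xi$ and $\theta$ of polynomial growth, so that all the cochains and homotopy operators from Section~\ref{alg-gauss-manin} extend continuously to $\cA\ptimes\cA$ and the algebraic monodromy computation goes through by density. Your write-up simply makes this explicit, and the structure---continuity first via the hypotheses, then Cartan homotopy plus Proposition~\ref{prop:gauss-manin-infin-monodromy-gen}, then vanishing because $c$ is a $(b-B)$-cycle---is exactly what the paper intends.
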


We thus have $\langle \ev_t c, \phi^{(t)} \rangle = \langle \ev_0 c, \phi \cup e^{-t [\omega_0]} \rangle$ in the above setting. Note also that, as we have seen in the previous section, the $K$-groups of $S_1(G; C^\infty(I))$ and $C^*_{r,\omega^t}(G)$ are naturally isomorphic, so this gives a concrete description of the pairing between $\HC^*(S_1(G, \omega^t))$ and $K_*(C^*_{r,\omega^t}(G))$ in terms of the one between $\HC^*(S_1(G))$ and $K_*(C^*_{r}(G))$.

\section{K-theory of twisted crystallographic group algebras}

Let us consider a discrete $G$ sitting in an exact sequence $1 \to \Z^n \to G \to F \to 1$ such that $F$  is finite, and its induced action on $\Z^n$ is free away from origin. These groups are of polynomial growth, hence are amenable and there is no distinction between full and reduced twisted groups algebras. We also know that various classes of rapid decay functions coincide~\cite{MR943303}, so we just write $S(G)$. By~\cite{sale-thesis}*{Corollary 2.3.18} the conjugacy length function of $G$ has a linear growth. Moreover, one may take $\R^n$ as a model of $\underline{E}G$ (universal space of the proper $G$-actions)~\cite{MR1029389}, hence $G \backslash \R^n$ as a one for $\underline{B}G = G\backslash\underline{E}G$. Any $\R$-valued $2$-cocycle is up to cohomology represented by an $F$-invariant $2$-cocycle on $\Z^n$.

\begin{example}
Let $G$ be the group $\Z^n$. For each $n \times n$ real skew-symmetric matrix $\theta$, we can  construct a 2-cocycle on this group by defining $\omega_\theta(x, y) = \exp(\sqrt{-1}\langle \theta x,y \rangle )$. The corresponding twisted group C$^*$-algebra $C^*(G, \omega_\theta)$ is also called the noncommutative torus $C(\T^n_\theta)$. For $n=2$, since $\theta$ depends on only one number as $\langle \theta x, y \rangle = \frac{\theta'}2(x_2 y_1 - x_1 y_2)$, we call that number $\theta$ also.
\end{example}

\begin{example}
Let $F$ be a finite subgroup of $\GL_n(\Z)$ such that each $W \in F$ leaves $\theta$ invariant, i.e., $W^T\theta W = \theta$. Then we can define a 2-cocycle $\omega_\theta '$ on $G = \Z^n \rtimes F$ by $\omega_\theta '((x,g),(y,h)) = \omega_\theta (x,g.y)$ for $x, y \in \Z^n$ and $g, h \in F$. If each $W \in F$ not identity acts free away from origin, we obtain a crystallographic group. Again note that when $n = 2$, any finite subgroup of $\SL_2(\Z)$ will do.
\end{example}

Let $\cM$ denote the set of conjugacy classes of maximal finite subgroups of $G$. In the following theorem, $\tilde{K}_0(\C[P])$ denotes the reduced $K_0$-group, that is, the kernel of the map $K_0(\C[P]) \to K_0(\C)$ induced by the trivial representation. Note also that the map $K_i(C^*(\Z^n)) \to K_i(C^*(G))$ induced by the inclusion $\Z^n \to G$ factors through the space of coinvariants $\Z \otimes_F K_i(C^*(\Z^n))$.

\begin{theorem}[\cite{MR2949238}]
In the even degree, there exists an exact sequence 	
\begin{equation}
\label{eq:DL-exact-seq}
\begin{tikzcd}
0 \arrow{r} & \displaystyle\bigoplus_{P\in \cM} \tilde{K}_0(\C[P]) \arrow{r} &
   K_0 (C^* (G)) \arrow{r} & K_0 (\underline{B}G) \arrow{r} &  0,
\end{tikzcd}
\end{equation}
which splits after inverting $\absv{F}$. The natural map
$$
\Z \otimes_F K_0(C^*(\Z^n)) \oplus \biggl( \bigoplus_{P\in \cM} \tilde{K}_0(\C[P]) \biggr) \to K_0(C^*(G))
$$
also becomes a bijection after inverting $\absv{F}$. In the odd degree, we have $K_1(C^* (G)) \simeq K_1(\underline{B} G)$.
\end{theorem}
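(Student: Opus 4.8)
\emph{Plan of proof.} The plan is to recover the theorem from the Baum--Connes conjecture together with a Mayer--Vietoris decomposition of the model $\underline{E}G = \R^n$, following~\cite{MR2949238}. Being of polynomial growth, $G$ is amenable and hence satisfies Baum--Connes, so the assembly map identifies $K_*(C^*(G))$ with $K^G_*(\underline{E}G)$; I work with the affine model $\underline{E}G = \R^n$ recalled above. The geometric point is that, since the linear part of every nontrivial element of $F$ fixes no nonzero vector, the fixed-point set of any nontrivial finite subgroup of $G$ is a single point. Thus the stabiliser of a point of the singular set $\Sigma = \{\, y \in \R^n : G_y \ne 1 \,\}$ is a maximal finite subgroup, and $P \mapsto (\R^n)^P$ identifies $\cM$ with $G\backslash\Sigma$; writing $x_P$ and $V_P$ for the fixed point and the tangent $P$-representation of $P\in\cM$ one has $(V_P)^P=0$, so $P$ acts freely on $S(V_P)$. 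Removing small $G$-invariant balls around $\Sigma$ produces a free $G$-CW complex $X$ and a $G$-pushout
\begin{equation*}
\begin{tikzcd}
\coprod_{P \in \cM} G \times_P S(V_P) \arrow{r} \arrow{d} & X \arrow{d}\\
\coprod_{P \in \cM} G \times_P D(V_P) \arrow{r} & \R^n = \underline{E}G,
\end{tikzcd}
\end{equation*}
which on dividing by $G$ exhibits $\underline{B}G = G\backslash\R^n$ as $G\backslash X$ with the contractible cones $D(V_P)/P$ attached along $S(V_P)/P$.

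Next I would apply $G$-equivariant $K$-homology to the first square and ordinary $K$-homology to the second, obtaining two Mayer--Vietoris sequences and a natural comparison map between them. This map is an isomorphism on the terms coming from $X$ (the action there being free, so $K^G_*(X)\cong K_*(G\backslash X)$) and on those coming from the free spheres $S(V_P)$, whereas on the disk terms it is the augmentation $R(P) = K^G_*(G\times_P D(V_P)) \to K_*(D(V_P)/P) = \Z$ (as $D(V_P)$ is $P$-equivariantly contractible), whose kernel is the reduced representation group $I(P)\cong\tilde K_0(\C[P])$, concentrated in even degree. Hence the cofibre of the comparison map is built only from these $\tilde K_0(\C[P])$, and a diagram chase through the induced map of long exact sequences produces
\begin{equation*}
0 \to K^G_1(\underline{E}G) \to K_1(\underline{B}G) \to \bigoplus_{P \in \cM} \tilde K_0(\C[P]) \xrightarrow{\delta} K^G_0(\underline{E}G) \to K_0(\underline{B}G) \to 0,
\end{equation*}
where $\delta$ is the sum over $P\in\cM$ of the maps $\tilde K_0(\C[P]) \hookrightarrow K_0(\C[P]) \to K_0(C^*(G))$ induced by $P\hookrightarrow G$.

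To finish I would prove that $\delta$ is injective; this at once makes $K_1(C^*(G)) = K^G_1(\underline{E}G) \to K_1(\underline{B}G)$ an isomorphism and turns the sequence above into~\eqref{eq:DL-exact-seq}. The way to get this is to invert $\absv{F}$ and invoke the equivariant Chern character for the cocompact model $\underline{E}G = \R^n$: after inverting $\absv{F}$ the free stratum contributes the $F$-coinvariants of $K^{\Z^n}_*(\R^n) = K_*(C^*(\Z^n))$ --- higher $F$-homology vanishing --- and the torsion strata contribute the $\tilde K_0(\C[P])$, these summands splitting off; in particular $\delta$ is split injective after inverting $\absv{F}$, and since $\bigoplus_P \tilde K_0(\C[P])$ is free abelian, $\delta$ is injective over $\Z$. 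Reading this decomposition in degree zero gives the bijection $\Z\otimes_F K_0(C^*(\Z^n)) \oplus \bigoplus_P \tilde K_0(\C[P]) \xrightarrow{\sim} K_0(C^*(G))[1/\absv{F}]$, the first summand being the image of $K_0(C^*(\Z^n))$ under the map which factors through the $F$-coinvariants (as observed before the theorem), and compatibility of this bijection with~\eqref{eq:DL-exact-seq} gives the splitting after inverting $\absv{F}$. I expect the main obstacle to be exactly this last step: assembling the equivariant Chern character with its induction structure and verifying that for these $G$ it collapses to the free-plus-torsion description. The geometric model, the $G$-pushout, and the Mayer--Vietoris comparison are then bookkeeping, once one also notes that $\tilde K_0(\C[P])$ is torsion-free.
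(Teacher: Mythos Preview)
The paper does not give a proof of this theorem: it is quoted verbatim from Langer--L\"uck~\cite{MR2949238} and used as input for the subsequent discussion of pairings with cyclic cocycles. There is therefore nothing in the present paper to compare your attempt against.

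That said, your outline is a faithful summary of the strategy actually used in the cited reference: Baum--Connes for the amenable group $G$, the affine model $\underline{E}G=\R^n$ with isolated singular set, the $G$-pushout obtained by excising small equivariant balls around the fixed points, the comparison of the equivariant and non-equivariant Mayer--Vietoris sequences (the latter for $\underline{B}G$), and finally the equivariant Chern character after inverting $\absv{F}$ to obtain the splitting and to deduce injectivity of $\delta$ over $\Z$ from torsion-freeness of $\bigoplus_P \tilde{K}_0(\C[P])$. The one place where you flag uncertainty --- collapsing the equivariant Chern character to the free-plus-torsion decomposition --- is precisely the technical heart of~\cite{MR2949238}, and is worked out there in detail; your sketch correctly identifies both the mechanism and the difficulty.
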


Under the above setting, any nontrivial finite subgroup of $G$ has a finite normalizer~\cite{MR1803230}*{Lemma 6.1}. In particular, any nontrivial torsion element has a finite centralizer. Thus, for the twisted group algebras, the pairing with cyclic cocycles supported by the conjugacy class of such an elements is described by Section~\ref{sec:fin-centralizer}.

As for the factor labeled by the identity element,

For the above examples $H^*(G; \C)$ is bounded in degree by $n$. Hence the assumptions of Theorem~\ref{thm:paring-gen-smooth} are satisfied and we can compute the pairings $\langle \ev_t c, \phi^{(t)} \rangle$ for $c \in \HP_*(S_1(G; C^\infty(I)))$. For $x = e$, the pairing reduces to that between $K^*(B \Z^n) \simeq K^*(\T^n)$ and $\HP^*(C^\infty(\T^n)) \simeq \bigoplus_k H_{*+2k}(\T^n;\C)$ with modification by $e^{t [\omega_0]}$ with $[\omega_0] \in H^2(\T^n; \C)$, while for nontrivial torsion $x$, the pairing with $\tau^x_{\omega^t}$ essentially does not see $t$.

for the even degree, on the image of $K_0(C^*(\Z^n))$ it reduces to the pairing between
$$
\Bigl(\bigwedge^{2*} \C^n\Bigr)^F \simeq H^{2*}(\Z^n; \C)^F \simeq H^{2*}(B\Z^n; \C)^F
$$
and $\Z \otimes_F K_0(C^*(\Z^n)) \simeq \Z \otimes_F H_{2*}(B \Z^n; \Z)$. On the image of $\tilde{K}_0(\C[P])$, we just pick up the coefficient of $e \in G$ in the projections representing the $\tilde{K}_0$-classes. For the odd degree it reduces to the natural pairing of $(\bigwedge^{2*+1} \C^n)^F \simeq H^{2*+1}(\Z^n; \C)^F$ and $K_1(\underline{B}G) \simeq K_1(B G)$.

\medskip
If $F = \Z_m$, the kernel of natural map $\Z \otimes_F K_1(C^*(\Z^n)) \to K_1(C^*(G))$ is annihilated by the multiplication by $m$, the $K$-groups of $C^*(G)$ are free commutative groups whose rank can be explicitly described~\cite{MR2949238}. Let us further assume that $p = m$ is prime. Then any non-trivial finite subgroup $P$ of $G$ must be isomorphic to $\Z_p$ via the restriction of the projection map $G \to \Z_p$. Thus any nontrivial finite subgroup of $G$ represents an element of $\cM$. We have the following precise description of the $K$-groups.

\begin{theorem}[\cite{MR3054301}]
The number $n' = n/(p-1)$ is an integer, and $\absv{\cM} = p^{n'}$. The rank of $K$-homology groups of $\underline{B}G$ are given by
\begin{align*}
\rk K_0(\underline{B}G) &= \frac{2^{n} + p-1}{2p} + \frac{(p-1) p^{n'-1}}{2},&
\rk K_1(\underline{B}G) &= \frac{2^{n} + p-1}{2p} - \frac{(p-1) p^{n'-1}}{2}.
\end{align*}
\end{theorem}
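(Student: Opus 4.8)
The statement has three essentially independent parts, and I would treat them in turn, using only the structure of the finite subgroups of $G$ and of $\underline{B}G = G\backslash\underline{E}G$.

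\emph{Divisibility.} Since the action of $F=\Z_p$ on $\Z^n$ is free away from the origin, a generator of $F$ acts on $\Q^n$ by a matrix $W$ with $W^p=I$ and no eigenvalue $1$; hence its minimal polynomial is $\Phi_p(x)=1+x+\dots+x^{p-1}$ (irreducible of degree $p-1$), and $\Q^n$, as a $\Q[F]$-module without trivial summand, is a sum of copies of $\Q(\zeta_p)=\Q[x]/\Phi_p(x)$. This forces $n=n'(p-1)$ and shows that the characteristic polynomial of $W$ is $\Phi_p(x)^{n'}$.

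\emph{Counting $\cM$.} As recalled just above, every nontrivial finite subgroup of $G$ maps isomorphically onto $\Z_p$, hence is a maximal finite subgroup and a complement to $\Z^n$. A complement $P$ canonically determines the section $s_P\colon\Z_p\to G$ assigning to $\bar g$ the unique element of $P$ above it, hence a class in $Z^1(\Z_p;\Z^n)$; conversely every $1$-cocycle arises this way, and a direct computation shows that conjugating $P$ by an element of $G$ changes $s_P$ by a coboundary and that every coboundary is so realized. Thus $|\cM|=|H^1(\Z_p;\Z^n)|$. Because the norm element $1+W+\dots+W^{p-1}=\Phi_p(W)$ vanishes, $H^1(\Z_p;\Z^n)=\operatorname{coker}(W-I)$, a finite abelian group of order $|\det(W-I)|=|\Phi_p(1)|^{n'}=p^{n'}$.

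\emph{The ranks.} For crystallographic $G$ one may take $\underline{E}G=\R^n$ (fixed-point sets affine, hence contractible), so $\underline{B}G=\Z^n\backslash\R^n$ with the residual $F$-action, i.e.\ $\underline{B}G\cong\T^n/\Z_p$, a compact polyhedron. For such a space the Chern character gives $\rk K_i(\underline{B}G)=\sum_{k\equiv i\ (2)}\dim_\Q H_k(\underline{B}G;\Q)$, and the transfer for the finite group $\Z_p$ over $\Q$ gives $H_*(\T^n/\Z_p;\Q)\cong H_*(\T^n;\Q)^{\Z_p}=(\Lambda^*\Q^n)^{\Z_p}$. The Poincaré series of the last space is the Molien series
$$
\sum_k\dim_\Q(\Lambda^k\Q^n)^{\Z_p}\,t^k=\frac1p\sum_{j=0}^{p-1}\det(1+tW^j)=\frac1p\Bigl((1+t)^n+(p-1)\!\!\prod_{\zeta^p=1,\ \zeta\neq1}\!\!(1+t\zeta)^{n'}\Bigr),
$$
where Step 1 identifies the nontrivial summands. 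Evaluating at $t=1$ and at $t=-1$, and using $\prod_{\zeta\neq1}(1+\zeta)=1$, $\prod_{\zeta\neq1}(1-\zeta)=\Phi_p(1)=p$, yields $\rk K_0(\underline{B}G)+\rk K_1(\underline{B}G)=\frac{2^n+p-1}{p}$ and $\rk K_0(\underline{B}G)-\rk K_1(\underline{B}G)=(p-1)p^{n'-1}$; solving the two equations gives the asserted formulas.

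\emph{Main difficulty.} The delicate point is Step 3: one must pin down that $K_*(\underline{B}G)$ is the topological $K$-homology of the orbit space $\T^n/\Z_p$ (not an equivariant or Bredon variant), so that the Atiyah--Hirzebruch/Chern character argument applies despite $\underline{B}G$ being singular. The quoted exact sequence $0\to\bigoplus_P\tilde K_0(\C[P])\to K_0(C^*(G))\to K_0(\underline{B}G)\to 0$ is the sanity check: the ``orbifold'' contributions are precisely the $\tilde K_0(\C[P])$ summands and are invisible to $K_*(\underline{B}G)$. Granting this, Steps 1--2 and the Molien bookkeeping are routine; one should, however, treat $p=2$ separately, where the nontrivial determinant factor $(1-t)^n$ degenerates at $t=1$.
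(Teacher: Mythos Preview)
The paper does not prove this statement; it is simply quoted from Davis--L\"uck~\cite{MR3054301}, so there is no in-paper proof to compare against. Your argument stands on its own and is correct for odd $p$. The three steps are all standard and sound: the divisibility via the $\Q[\Z_p]$-module structure of $\Q^n$; the count $|\cM|=|H^1(\Z_p;\Z^n)|=|\operatorname{coker}(W-I)|=|\det(W-I)|=p^{n'}$ via the classification of complements in a split extension with abelian kernel (and $G$-conjugacy coincides with $\Z^n$-conjugacy because $\Z_p$ is abelian); and the rank computation via the rational Chern character for the finite CW complex $\T^n/\Z_p$, the isomorphism $H_*(\T^n/\Z_p;\Q)\cong(\Lambda^*\Q^n)^{\Z_p}$ (valid over $\Q$ for any finite group action, fixed points notwithstanding), and Molien's formula. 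The ``main difficulty'' you flag is not really an obstacle: in the Langer--L\"uck exact sequence $K_*(\underline{B}G)$ is indeed the ordinary topological $K$-homology of the quotient space, so Atiyah--Hirzebruch applies directly.

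Your caveat about $p=2$ is apt but slightly misdiagnosed. The issue is not that $(1-t)^n$ vanishes at $t=1$; rather, the identity $\prod_{\zeta\neq 1}(1+\zeta)=1$ that you use to reach $(2^n+p-1)/p$ fails for $p=2$, where the unique nontrivial root is $\zeta=-1$ and the product is $0$. Indeed for $p=2$ one has $W=-I$, the invariants are exactly the even exterior powers, and the Molien computation gives $\rk K_0(\underline{B}G)=2^{n-1}$ and $\rk K_1(\underline{B}G)=0$, whereas the displayed formula would return the non-integer $(2^{n+1}+1)/4$. So the theorem as transcribed should be read with $p$ odd.
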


We want to explain how projections in $\tilde{K}_0(\C[P])$ contribute to the $K_0 (C^* (G)$ and $K_0(C^*(G, \omega_\theta'))$. Since $\C[P]$ is isomorphic to the algebra $C(\hat{P}) \simeq \C^p$, $K_0(\C[P])$ is the free abelian group of rank $p$. Now suppose $g$ is a generator of $P$. The minimal projections of $\C[P]$ are given by
$$
Q_{j,g} = \frac{1}{p} \sum_{k=0}^{p-1} \exp\left(\sqrt{-1}\frac{2\pi}{p} j k\right) \lambda_{g^k}
$$
for $j=0, \cdots, p-1$, which also represent a basis of $K_0(\C[P])$. Since $Q_{0,g}$ represents the trivial representation, a basis of $\tilde{K}_0(\C[P])$ is given by $Q_{1,g}, \cdots , Q_{p-1,g}$.

By the above theorem, these projections are still linearly independent in $K_0 (C^*(G))$. To get elements of $K_0(C^*(G, \omega_\theta'))$, we need to modify this presentation a bit. Continuing to denote a generator of $P$ by $g$ the unitary $\lambda_{g}$ has order $p$ in $C^*(G)$. But since we modified the product in  $C^*(G, \omega_\theta')$, the unitary $\lambda_{g}^{(\omega_\theta')}$ need not be so. Still, since $\omega_\theta'$ is cohomologically trivial on the finite group $P$ and hence $\C_{\omega_\theta'}[P] \simeq \C[P]$, we can always multiply suitable $z \in \T$ so that order of the unitary  $z\lambda_{g}^{(\omega_\theta')}$ is $p$. Then a similar formula
$$
Q^{(\theta)}_{j,g} = \frac{1}{p} \sum_{k=0}^{p-1} \exp\left(\sqrt{-1}\frac{2\pi}{p} j k\right) z^k \lambda_{g^k}^{(\omega_\theta')}
$$
for $j=1, \cdots, p-1$ will give projections which are elements of $K_0(C^*(G, \omega_\theta'))$.

\begin{remark}
Yashinski~\cite{yashinski-thesis} proposed another approach to compare $\HP_*(S(\Z^n, \omega_\theta))$ for different $\theta$ based on $A_\infty$-iso\-mor\-phism of Cartan--Eilenberg complexes, which is equivariant under the action of $\Z_m$ as above and would provide alternative route to the above result. While this is an interesting viewpoint, our understanding is that there is a gap in his argument, mainly because the cyclic bicomplex is not functorial for $A_\infty$-morphisms.
\end{remark}

\subsection{Crossed product of \texorpdfstring{$\Z^2$}{Z2} by \texorpdfstring{$\Z_3$}{Z3}}

Let us come back to Example~\ref{ex:Z2-by-Z3-traces}. This group $\Z^2 \rtimes \Z_3$ can be presented as
$$
G = \left\langle u,v, w \mid w^3=e, u v=v u, w u w^{-1} =u^{-1}v, w v w^{-1} =u^{-1} \right\rangle.
$$
Up to conjugation, the finite subgroups of $G$ are generated by one of the elements $w$, $u w$ or, $u^{2} w$ all of which are of order $3$.
Since $\omega_\theta'(w,w) = 1$, the first element still has order $3$ in the twisted group algebra, and the corresponding projections $Q^{(\theta)}_{1,w}$ and $Q^{(\theta)}_{2,w}$ give nontrivial classes $K_0(C^*(G, \omega_\theta'))$. On the other hand,
$$
\omega_\theta'(u w, u w) \omega_\theta'(u w u w, u w) = \omega_\theta(u, u^{-1} v) \omega_\theta(v, v^{-1}) = e^{-\sqrt{-1} \frac{\theta}{2}}
$$
shows that $\lambda_{u w}^{(\omega_\theta')}$ is not of order $3$. But an adjustment like  $\exp(\sqrt{-1}\frac{\theta}{6})\lambda_{u w}^{(\omega_\theta')}$ gives the right order. When $x, g \in G$ are torsion elements and the intersection $\Ad_G(x) \cap \langle g \rangle$ is nontrivial, there is a nontrivial pairing between $Q^{(\theta)}_{j,g}$ and $\tau^x_{\omega_\theta'}$. Theorem~\ref{thm:invar-algebraic} says that, due to an intricate way the coefficients in both are modified by $\theta$, the value of $\tau^x_{\omega_\theta'}(Q^{(\theta)}_{j,g})$ remains constant. Let us compare this to \cites{MR2301936}.

For simplicity, let us write $e(t)$ instead of $\exp(\sqrt{-1} t \theta)$. Then as noticed above $e(\frac{1}{6})\lambda_{u w}^{(\omega_\theta')}$ is an unitary of order 3. Similarly one can see that $e(\frac{2}{3})\lambda_{u^2 w}^{(\omega_\theta')}$ is also an unitary of order 3. Then we have the following projections $Q^{(\theta)}_{1,w}, Q^{(\theta)}_{2,w}, Q^{(\theta)}_{1,u w}, Q^{(\theta)}_{2,u w}, Q^{(\theta)}_{1,u^2 w}, Q^{(\theta)}_{2,u^2t}$  defining nontrivial classes in $K_0(C^*(G, \omega_\theta'))$. 

Recall from \cite{MR2301936} that if $\alpha$ defines an action of $\Z_p$ on $A$,  an $\alpha$-invariant functional $\phi$ on $A$ is said to be an \emph{$\alpha$-trace} if 
$$
\phi(xy) = \phi(\alpha(y)x)
$$
holds for any $x, y \in A$. An $\alpha^s$-trace $\phi$ gives rise to a trace $T_{\phi}$ on $\Z_p \ltimes A$ defined by
$$
T_{\phi}(x_0 + x_1 w + \cdots +x_{p-1} w^{p-1}) = \phi(x_{p-s})
$$
for $x_i \in A$ and $w$ being the copy of $1 \in \Z_p$.

Coming back to the example, let us consider the induced action of $\Z_3$ on $S(\Z^2, \omega_\theta)$. From \cite{MR2301936}*{Theorem 3.3}, we have following $\alpha$-traces $\phi^1_{l}$ on $S(\Z^2, \omega_\theta)$:
$$
\phi^1_{l}(\lambda^{(\omega_\theta) m}_{u}\lambda^{(\omega_\theta) n}_{v}) = e\left(\frac{1}{6}((m-n)^2-l^2)\right)\delta_{3\Z}(m-n-l)
$$
for $l = 0, 1, 2$. Similarly we have following $\alpha^2$-traces $\phi^{2}_{l}$ on $S(\Z^2, \omega_\theta)$:
$$
\phi^{2}_{l}(\lambda^{(\omega_\theta) m}_{u}\lambda^{(\omega_\theta) n}_{v}) = e\left(-mn-\frac{1}{6}((m-n)^2-l^2)\right)\delta_{3\Z}(m-n-l)
$$
for $l = 0, 1, 2$. Each $\phi^i_{l}$ then gives the the traces $T^i_{l}=T_{\phi^i_{l}}$ on $\Z_3 \ltimes S(\Z^2, \omega_\theta) = S(G, \omega'_\theta)$. Using $\lambda^{(\omega_\theta) m}_{u}\lambda^{(\omega_\theta) n}_{v} = e(-\frac{m n}2) \lambda^{(\omega_\theta)}_{u^m v^n}$, it is straightforward to check that the trace $\tau^t_{\omega_\theta'}$ of Example~\ref{ex:Z2-by-Z3-traces} is exactly equal to $T^{2}_0$. 

The pairing of the above projections and traces are given by the following table (cf.~\cite{MR2301936}*{Theorem 1.2}), which is independent of $\theta$ as suggested by Theorem~\ref{thm:invar-algebraic}.

\begin{center}
\begin{tabular}{ l | r | r | r | r | r | r }
%\hline
     & $\tau^{w^2}_{\omega'_\theta} = T^1_0$ & $\tau^{u w^2}_{\omega'_\theta} =  T^1_1$ & $\tau^{u^2 w^2}_{\omega'_\theta} =  T^1_2$ & $\tau^{w}_{\omega'_\theta} = T^{2}_0$ & $\tau^{u w}_{\omega'_\theta} = T^{2}_1$ & $\tau^{u^2 w}_{\omega'_\theta} =  T^{2}_2$\\
\hline
    $Q^{(\theta)}_{1,w}$ & $\frac{1}{3} e^{\frac{4\pi}{3} \sqrt{-1}}$ & $0$ & $0$ & $\frac{1}{3} e^{\frac{2\pi}{3} \sqrt{-1}}$  & $0$ & $0$ \\
%\hline
    $Q^{(\theta)}_{2,w}$ & $\frac{1}{3} e^{\frac{2\pi}{3} \sqrt{-1}}$ & $0$ & $0$ & $\frac{1}{3} e^{\frac{4\pi}{3} \sqrt{-1}}$ & $0$ & $0$\\
%\hline
    $Q^{(\theta)}_{1,u w}$ & $0$ & $0$ & $\frac{1}{3} e^{\frac{4\pi}{3} \sqrt{-1}}$ & $0$ & $\frac{1}{3} e^{\frac{2\pi}{3} \sqrt{-1}}$ & $0$ \\
%\hline
    $Q^{(\theta)}_{2,u w}$ & $0$ & $0$ & $\frac{1}{3} e^{\frac{2\pi}{3} \sqrt{-1}}$ & $0$ & $\frac{1}{3} e^{\frac{4\pi}{3} \sqrt{-1}}$ & $0$ \\
%\hline
    $Q^{(\theta)}_{1,u^2 w}$ & $0$ & $\frac{1}{3} e^{\frac{4\pi}{3} \sqrt{-1}}$ & $0$ & $0$ & $0$ & $\frac{1}{3} e^{\frac{2\pi}{3} \sqrt{-1}}$ \\
%\hline
    $Q^{(\theta)}_{2,u^2 w}$ & $0$ & $\frac{1}{3} e^{\frac{2\pi}{3} \sqrt{-1}}$ & $0$ & $0$ & $0$ & $\frac{1}{3} e^{\frac{4\pi}{3} \sqrt{-1}}$ \\
%\hline
\end{tabular}
\end{center} 

\begin{bibdiv}
\begin{biblist}
 
\bib{MR0197526}{article}{
   author={Andr{\'e}, Michel},
   title={Le $d_{2}$ de la suite spectrale en cohomologie des groupes},
   language={French},
   journal={C. R. Acad. Sci. Paris},
   volume={260},
   date={1965},
   pages={2669--2671},
   review={\MR{0197526}},
}

\bib{MR3421592}{article}{
      author={Behrstock, Jason},
      author={Dru\c{t}u, Cornelia},
       title={Divergence, thick groups, and short conjugators},
        date={2014},
        ISSN={0019-2082},
     journal={Illinois J. Math.},
      volume={58},
      number={4},
       pages={939\ndash 980},
         url={http://projecteuclid.org/euclid.ijm/1446819294},
      review={\MR{3421592}},
}

\bib{MR0120529}{article}{
      author={Bonic, Robert~A.},
       title={Symmetry in group algebras of discrete groups},
        date={1961},
        ISSN={0030-8730},
     journal={Pacific J. Math.},
      volume={11},
       pages={73\ndash 94},
         url={http://projecteuclid.org/euclid.pjm/1103037535},
      review={\MR{0120529}},
}

\bib{MR1324339}{book}{
      author={Brown, Kenneth~S.},
       title={Cohomology of groups},
      series={Graduate Texts in Mathematics},
   publisher={Springer-Verlag},
     address={New York},
        date={1994},
      volume={87},
        ISBN={0-387-90688-6},
        note={Corrected reprint of the 1982 original},
      review={\MR{1324339 (96a:20072)}},
}

\bib{MR2301936}{article}{
   author={Buck, J.},
   author={Walters, S.},
   title={Connes-Chern characters of hexic and cubic modules},
   journal={J. Operator Theory},
   volume={57},
   date={2007},
   number={1},
   pages={35--65},
   issn={0379-4024},
   review={\MR{2301936}},
}

\bib{MR814144}{article}{
      author={Burghelea, Dan},
       title={The cyclic homology of the group rings},
        date={1985},
        ISSN={0010-2571},
     journal={Comment. Math. Helv.},
      volume={60},
      number={3},
       pages={354\ndash 365},
         url={http://dx.doi.org/10.1007/BF02567420},
         doi={10.1007/BF02567420},
      review={\MR{814144 (88e:18007)}},
}

\bib{MR1957682}{article}{
      author={Chen, Xiaoman},
      author={Wei, Shuyun},
       title={Spectral invariant subalgebras of reduced crossed product
  {$C^*$}-algebras},
        date={2003},
        ISSN={0022-1236},
     journal={J. Funct. Anal.},
      volume={197},
      number={1},
       pages={228\ndash 246},
         url={http://dx.doi.org/10.1016/S0022-1236(02)00031-9},
         doi={10.1016/S0022-1236(02)00031-9},
      review={\MR{1957682}},
}

\bib{MR823176}{article}{
      author={Connes, Alain},
       title={Noncommutative differential geometry},
        date={1985},
        ISSN={0073-8301},
     journal={Inst. Hautes \'Etudes Sci. Publ. Math.},
      volume={62},
       pages={257\ndash 360},
      review={\MR{MR823176 (87i:58162)}},
}

\bib{MR1029389}{article}{
      author={Connolly, Frank},
      author={Ko\'zniewski, Tadeusz},
       title={Rigidity and crystallographic groups. {I}},
        date={1990},
        ISSN={0020-9910},
     journal={Invent. Math.},
      volume={99},
      number={1},
       pages={25\ndash 48},
         url={http://dx.doi.org/10.1007/BF01234410},
         doi={10.1007/BF01234410},
      review={\MR{1029389}},
}

\bib{MR2052770-Cuntz}{incollection}{
      author={Cuntz, Joachim},
       title={Cyclic {T}heory, {B}ivariant {$K$}-{T}heory and the {B}ivariant
  {C}hern-{C}onnes {C}haracter},
        date={2004},
   booktitle={Cyclic homology in non-commutative geometry},
      series={Encyclopaedia of Mathematical Sciences},
      volume={121},
   publisher={Springer-Verlag},
     address={Berlin},
       pages={1\ndash 72},
        note={Operator Algebras and Non-commutative Geometry, II},
      review={\MR{MR2052770 (2005k:19008)}},
}

\bib{MR3054301}{article}{
      author={Davis, James~F.},
      author={L{\"u}ck, Wolfgang},
       title={The topological {K}-theory of certain crystallographic groups},
        date={2013},
        ISSN={1661-6952},
     journal={J. Noncommut. Geom.},
      volume={7},
      number={2},
       pages={373\ndash 431},
         url={http://dx.doi.org/10.4171/JNCG/121},
         doi={10.4171/JNCG/121},
      review={\MR{3054301}},
}

\bib{MR2608195}{article}{
      author={Echterhoff, Siegfried},
      author={L{\"u}ck, Wolfgang},
      author={Phillips, N.~Christopher},
      author={Walters, Samuel},
       title={The structure of crossed products of irrational rotation algebras
  by finite subgroups of {$\mathrm{SL}_2(\mathbb{Z})$}},
        date={2010},
        ISSN={0075-4102},
     journal={J. Reine Angew. Math.},
      volume={639},
       pages={173\ndash 221},
      eprint={\href{http://arxiv.org/abs/math/0609784}{\texttt{arXiv:math/0609784}}},
         url={http://dx.doi.org/10.1515/CRELLE.2010.015},
         doi={10.1515/CRELLE.2010.015},
      review={\MR{2608195 (2011c:46127)}},
}

\bib{MR731772}{incollection}{
      author={Elliott, G.~A.},
       title={On the {$K$}-theory of the {$C\sp{\ast} $}-algebra generated by a
  projective representation of a torsion-free discrete abelian group},
        date={1984},
   booktitle={Operator algebras and group representations, {V}ol. {I}
  ({N}eptun, 1980)},
      series={Monogr. Stud. Math.},
      volume={17},
   publisher={Pitman},
     address={Boston, MA},
       pages={157\ndash 184},
      review={\MR{731772 (85m:46067)}},
}

\bib{MR1261901}{incollection}{
      author={Getzler, Ezra},
       title={Cartan homotopy formulas and the {G}auss-{M}anin connection in
  cyclic homology},
        date={1993},
   booktitle={Quantum deformations of algebras and their representations
  ({R}amat-{G}an, 1991/1992; {R}ehovot, 1991/1992)},
      series={Israel Math. Conf. Proc.},
      volume={7},
   publisher={Bar-Ilan Univ.},
     address={Ramat Gan},
       pages={65\ndash 78},
      review={\MR{1261901 (95c:19002)}},
}

\bib{MR1711612}{book}{
      author={Goerss, Paul~G.},
      author={Jardine, John~F.},
       title={Simplicial homotopy theory},
      series={Progress in Mathematics},
   publisher={Birkh\"auser Verlag, Basel},
        date={1999},
      volume={174},
        ISBN={3-7643-6064-X},
         url={http://dx.doi.org/10.1007/978-3-0348-8707-6},
         doi={10.1007/978-3-0348-8707-6},
      review={\MR{1711612}},
}

\bib{MR1821144}{article}{
      author={Higson, Nigel},
      author={Kasparov, Gennadi},
       title={{$E$}-theory and {$KK$}-theory for groups which act properly and
  isometrically on {H}ilbert space},
        date={2001},
        ISSN={0020-9910},
     journal={Invent. Math.},
      volume={144},
      number={1},
       pages={23\ndash 74},
         url={http://dx.doi.org/10.1007/s002220000118},
         doi={10.1007/s002220000118},
      review={\MR{1821144 (2002k:19005)}},
}

\bib{MR0256185}{article}{
      author={Jenkins, Joe~W.},
       title={Symmetry and nonsymmetry in the group algebras of discrete
  groups},
        date={1970},
        ISSN={0030-8730},
     journal={Pacific J. Math.},
      volume={32},
       pages={131\ndash 145},
         url={http://projecteuclid.org/euclid.pjm/1102977529},
      review={\MR{0256185}},
}

\bib{MR1246282}{article}{
      author={Ji, Ronghui},
       title={A module structure on cyclic cohomology of group graded
  algebras},
        date={1993},
        ISSN={0920-3036},
     journal={$K$-Theory},
      volume={7},
      number={4},
       pages={369\ndash 399},
         url={http://dx.doi.org/10.1007/BF00962054},
         doi={10.1007/BF00962054},
      review={\MR{1246282}},
}

\bib{MR2575390}{article}{
      author={Ji, Ronghui},
      author={Ogle, Crichton},
      author={Ramsey, Bobby},
       title={Relatively hyperbolic groups, rapid decay algebras and a
  generalization of the {B}ass conjecture},
        date={2010},
        ISSN={1661-6952},
     journal={J. Noncommut. Geom.},
      volume={4},
      number={1},
       pages={83\ndash 124},
         url={http://dx.doi.org/10.4171/JNCG/50},
         doi={10.4171/JNCG/50},
        note={With an appendix by Ogle},
      review={\MR{2575390}},
}

\bib{MR1394381}{article}{
      author={Ji, Ronghui},
      author={Schweitzer, Larry~B.},
       title={Spectral invariance of smooth crossed products, and rapid decay
  locally compact groups},
        date={1996},
        ISSN={0920-3036},
     journal={$K$-Theory},
      volume={10},
      number={3},
       pages={283\ndash 305},
         url={http://dx.doi.org/10.1007/BF00538186},
         doi={10.1007/BF00538186},
      review={\MR{1394381 (97d:46079)}},
}

\bib{MR943303}{article}{
      author={Jolissaint, Paul},
       title={Rapidly decreasing functions in reduced {$C^*$}-algebras of
  groups},
        date={1990},
        ISSN={0002-9947},
     journal={Trans. Amer. Math. Soc.},
      volume={317},
      number={1},
       pages={167\ndash 196},
         url={http://dx.doi.org/10.2307/2001458},
         doi={10.2307/2001458},
      review={\MR{943303 (90d:22006)}},
}

\bib{MR732839}{article}{
      author={Karoubi, Max},
       title={Homologie cyclique des groupes et des alg{\`e}bres},
        date={1983},
        ISSN={0249-6291},
     journal={C. R. Acad. Sci. Paris S{\'e}r. I Math.},
      volume={297},
      number={7},
       pages={381\ndash 384},
      review={\MR{732839 (85g:18012)}},
}

\bib{MR1914617}{article}{
      author={Lafforgue, Vincent},
       title={{$K$}-th\'eorie bivariante pour les alg\`ebres de {B}anach et
  conjecture de {B}aum-{C}onnes},
        date={2002},
        ISSN={0020-9910},
     journal={Invent. Math.},
      volume={149},
      number={1},
       pages={1\ndash 95},
         url={http://dx.doi.org/10.1007/s002220200213},
         doi={10.1007/s002220200213},
      review={\MR{1914617 (2003d:19008)}},
}

\bib{MR1774859}{article}{
      author={Lafforgue, Vincent},
       title={A proof of property ({RD}) for cocompact lattices of {$\mathrm{  SL}(3,\mathbf{R})$} and {$\mathrm{SL}(3,\mathbf{C})$}},
        date={2000},
        ISSN={0949-5932},
     journal={J. Lie Theory},
      volume={10},
      number={2},
       pages={255\ndash 267},
      review={\MR{1774859}},
}

\bib{MR2874956}{article}{
      author={Lafforgue, Vincent},
       title={La conjecture de {B}aum-{C}onnes \`a coefficients pour les
  groupes hyperboliques},
        date={2012},
        ISSN={1661-6952},
     journal={J. Noncommut. Geom.},
      volume={6},
      number={1},
       pages={1\ndash 197},
      eprint={\href{http://arxiv.org/abs/1201.4653}{\texttt{arXiv:1201.4653
  [math.OA]}}},
         url={http://dx.doi.org/10.4171/JNCG/89},
         doi={10.4171/JNCG/89},
      review={\MR{2874956}},
}

\bib{MR2949238}{article}{
      author={Langer, Martin},
      author={L\"uck, Wolfgang},
       title={Topological {$K$}-theory of the group {$C^*$}-algebra of a
  semi-direct product {$\mathbb{Z}^n\rtimes\mathbb{Z}/m$} for a free conjugation
  action},
        date={2012},
        ISSN={1793-5253},
     journal={J. Topol. Anal.},
      volume={4},
      number={2},
       pages={121\ndash 172},
         url={http://dx.doi.org/10.1142/S1793525312500082},
         doi={10.1142/S1793525312500082},
      review={\MR{2949238}},
}

\bib{MR1600246}{book}{
      author={Loday, Jean-Louis},
       title={Cyclic homology},
     edition={Second},
      series={Grundlehren der Mathematischen Wissenschaften [Fundamental
  Principles of Mathematical Sciences]},
   publisher={Springer-Verlag},
     address={Berlin},
        date={1998},
      volume={301},
        ISBN={3-540-63074-0},
        note={Appendix E by Mar{\'{\i}}a O. Ronco, Chapter 13 by the author in
  collaboration with Teimuraz Pirashvili},
      review={\MR{1600246 (98h:16014)}},
}

\bib{MR780077}{article}{
      author={Loday, Jean-Louis},
      author={Quillen, Daniel},
       title={Cyclic homology and the {L}ie algebra homology of matrices},
        date={1984},
        ISSN={0010-2571},
     journal={Comment. Math. Helv.},
      volume={59},
      number={4},
       pages={569\ndash 591},
      review={\MR{780077 (86i:17003)}},
}

\bib{MR1803230}{article}{
      author={L\"uck, Wolfgang},
      author={Stamm, Roland},
       title={Computations of {$K$}- and {$L$}-theory of cocompact planar
  groups},
        date={2000},
        ISSN={0920-3036},
     journal={$K$-Theory},
      volume={21},
      number={3},
       pages={249\ndash 292},
         url={http://dx.doi.org/10.1023/A:1026539221644},
         doi={10.1023/A:1026539221644},
      review={\MR{1803230}},
}

\bib{MR1018749}{article}{
      author={Lys\"enok, I.~G.},
       title={Some algorithmic properties of hyperbolic groups},
        date={1989},
        ISSN={0373-2436},
     journal={Izv. Akad. Nauk SSSR Ser. Mat.},
      volume={53},
      number={4},
       pages={814\ndash 832, 912},
        note={translation in Math. USSR-Izv. \textbf{35} (1990), no. 1, 145--163},
      review={\MR{1018749}},
}

\bib{MR2218025}{incollection}{
      author={Mathai, Varghese},
       title={Heat kernels and the range of the trace on completions of twisted
  group algebras},
        date={2006},
   booktitle={The ubiquitous heat kernel},
      series={Contemp. Math.},
      volume={398},
   publisher={Amer. Math. Soc.},
     address={Providence, RI},
       pages={321\ndash 345},
        note={With an appendix by Indira Chatterji},
      review={\MR{2218025 (2007c:58034)}},
}

\bib{MR0144222}{article}{
      author={Mitiagin, B.},
      author={Rolewicz, S.},
      author={\.Zelazko, W.},
       title={Entire functions in {$B\sb{0}$}-algebras},
        date={1961/1962},
        ISSN={0039-3223},
     journal={Studia Math.},
      volume={21},
       pages={291\ndash 306},
      review={\MR{0144222}},
}

\bib{MR1002543}{article}{
      author={Packer, Judith~A.},
      author={Raeburn, Iain},
       title={Twisted crossed products of {$C^*$}-algebras},
        date={1989},
        ISSN={0305-0041},
     journal={Math. Proc. Cambridge Philos. Soc.},
      volume={106},
      number={2},
       pages={293\ndash 311},
      review={\MR{1002543 (90g:46097)}},
}

\bib{MR3032813}{article}{
      author={Paravicini, Walther},
       title={The spectral radius in {$\scr{C}_0(X)$}-{B}anach algebras},
        date={2013},
        ISSN={1661-6952},
     journal={J. Noncommut. Geom.},
      volume={7},
      number={1},
       pages={135\ndash 147},
         url={http://dx.doi.org/10.4171/JNCG/111},
         doi={10.4171/JNCG/111},
      review={\MR{3032813}},
}

\bib{MR1082838}{article}{
      author={Phillips, N.~Christopher},
       title={{$K$}-theory for {F}r\'echet algebras},
        date={1991},
        ISSN={0129-167X},
     journal={Internat. J. Math.},
      volume={2},
      number={1},
       pages={77\ndash 129},
         url={http://dx.doi.org/10.1142/S0129167X91000077},
         doi={10.1142/S0129167X91000077},
      review={\MR{MR1082838 (92e:46143)}},
}

\bib{MR595412}{article}{
      author={Pimsner, M.},
      author={Voiculescu, D.},
       title={Imbedding the irrational rotation {$C\sp{\ast} $}-algebra into an
  {AF}-algebra},
        date={1980},
        ISSN={0379-4024},
     journal={J. Operator Theory},
      volume={4},
      number={2},
       pages={201\ndash 210},
      review={\MR{595412 (82d:46086)}},
}

\bib{MR2647141}{article}{
      author={Puschnigg, Michael},
       title={New holomorphically closed subalgebras of {$C^*$}-algebras of
  hyperbolic groups},
        date={2010},
        ISSN={1016-443X},
     journal={Geom. Funct. Anal.},
      volume={20},
      number={1},
       pages={243\ndash 259},
         url={http://dx.doi.org/10.1007/s00039-010-0062-y},
         doi={10.1007/s00039-010-0062-y},
      review={\MR{2647141}},
}

\bib{arXiv:1403.5848}{misc}{
      author={Quddus, Safdar},
       title={Cohomology of {$\mathcal{A}_\theta^{\mathrm{alg}} \rtimes
  \mathbb{Z}_2$} and its chern-connes pairing},
         how={preprint},
        date={2014},
        note={to appear in J. Noncommut. Geom.},
}

\bib{MR3427637}{article}{
      author={Quddus, Safdar},
       title={Hochschild and cyclic homology of the crossed product of
  algebraic irrational rotational algebra by finite subgroups of
  {$SL(2,\mathbb{Z})$}},
        date={2016},
        ISSN={0021-8693},
     journal={J. Algebra},
      volume={447},
       pages={322\ndash 366},
         url={http://dx.doi.org/10.1016/j.jalgebra.2015.08.019},
         doi={10.1016/j.jalgebra.2015.08.019},
      review={\MR{3427637}},
}

\bib{rieffel-irr-rot-pres}{misc}{
      author={Rieffel, Marc~A.},
       title={Irrational rotation {C$^*$}-algebras},
         how={short communication},
        date={1978},
        note={presented at International Congress of Mathematicians, Helsinki},
}

\bib{MR0154906}{article}{
      author={Rinehart, George~S.},
       title={Differential forms on general commutative algebras},
        date={1963},
        ISSN={0002-9947},
     journal={Trans. Amer. Math. Soc.},
      volume={108},
       pages={195\ndash 222},
      review={\MR{0154906 (27 \#4850)}},
}

\bib{sale-thesis}{thesis}{
      author={Sale, Andrew~W.},
       title={The length of conjugators in solvable groups and lattices of
  semisimple lie groups},
        type={Ph.D. Thesis},
      school={University of Oxford},
        date={2012},
}

\bib{MR3449958}{article}{
      author={Sale, Andrew},
       title={Conjugacy length in group extensions},
        date={2016},
        ISSN={0092-7872},
     journal={Comm. Algebra},
      volume={44},
      number={2},
       pages={873\ndash 897},
         url={http://dx.doi.org/10.1080/00927872.2014.990021},
         doi={10.1080/00927872.2014.990021},
      review={\MR{3449958}},
}

\bib{MR1232986}{article}{
      author={Schweitzer, Larry~B.},
       title={Dense {$m$}-convex {F}r\'echet subalgebras of operator algebra
  crossed products by {L}ie groups},
        date={1993},
        ISSN={0129-167X},
     journal={Internat. J. Math.},
      volume={4},
      number={4},
       pages={601\ndash 673},
         url={http://dx.doi.org/10.1142/S0129167X93000315},
         doi={10.1142/S0129167X93000315},
      review={\MR{1232986 (94g:46058)}},
}

\bib{MR1217384}{article}{
      author={Schweitzer, Larry~B.},
       title={Spectral invariance of dense subalgebras of operator algebras},
        date={1993},
        ISSN={0129-167X},
     journal={Internat. J. Math.},
      volume={4},
      number={2},
       pages={289\ndash 317},
         url={http://dx.doi.org/10.1142/S0129167X93000157},
         doi={10.1142/S0129167X93000157},
      review={\MR{1217384 (94i:46097)}},
}

\bib{MR0225131}{book}{
      author={Tr{\`e}ves, Fran{\c{c}}ois},
       title={Topological vector spaces, distributions and kernels},
   publisher={Academic Press},
     address={New York},
        date={1967},
      review={\MR{0225131 (37 \#726)}},
}

\bib{MR695483}{article}{
      author={Tsygan, B.~L.},
       title={Homology of matrix {L}ie algebras over rings and the {H}ochschild
  homology},
        date={1983},
        ISSN={0042-1316},
     journal={Uspekhi Mat. Nauk},
      volume={38},
      number={2(230)},
       pages={217\ndash 218},
        note={Translation in Russ. Math. Survey \textbf{38}(2) (1983), 198--199},
      review={\MR{695483 (85i:17014)}},
}

\bib{MR2052770-Tsygan}{incollection}{
      author={Tsygan, Boris},
       title={Cyclic homology},
        date={2004},
   booktitle={Cyclic homology in non-commutative geometry},
      series={Encyclopaedia of Mathematical Sciences},
      volume={121},
   publisher={Springer-Verlag},
     address={Berlin},
       pages={73\ndash 113},
        note={Operator Algebras and Non-commutative Geometry, II},
      review={\MR{2052770 (2005k:19008)}},
}

\bib{MR1332894}{article}{
      author={Walters, Samuel~G.},
       title={Projective modules over the non-commutative sphere},
        date={1995},
        ISSN={0024-6107},
     journal={J. London Math. Soc. (2)},
      volume={51},
      number={3},
       pages={589\ndash 602},
         url={http://dx.doi.org/10.1112/jlms/51.3.589},
         doi={10.1112/jlms/51.3.589},
      review={\MR{1332894}},
}

\bib{MR1758235}{article}{
      author={Walters, Samuel~G.},
       title={Chern characters of {F}ourier modules},
        date={2000},
        ISSN={0008-414X},
     journal={Canad. J. Math.},
      volume={52},
      number={3},
       pages={633\ndash 672},
         url={http://dx.doi.org/10.4153/CJM-2000-028-9},
         doi={10.4153/CJM-2000-028-9},
      review={\MR{1758235}},
}

\bib{arXiv:1107.2512}{misc}{
      author={Yamashita, Makoto},
       title={Deformation of algebras associated to group cocycles},
         how={preprint},
        date={2011},
      eprint={\href{http://arxiv.org/abs/1107.2512}{\texttt{arXiv:1107.2512
  [math.OA]}}},
}

\bib{arXiv:1207.6687}{misc}{
      author={Yamashita, Makoto},
       title={Monodromy of {G}auss--{M}anin connection for deformation by group
  cocycles},
         how={preprint},
        date={2012},
      eprint={\href{http://arxiv.org/abs/1207.6687}{\texttt{ arXiv:1207.6687
  [math.KT]}}},
        note={to appear in J. Noncommut. Geom.}
}

\bib{yashinski-thesis}{thesis}{
      author={Yashinski, Allan},
       title={Periodic cyclic homology and smooth deformations},
        type={Ph.D. Thesis},
      school={The Pennsylvania State University},
        date={2013},
}

\end{biblist}
\end{bibdiv}

\end{document}